\newcommand{\corrl}[1]{{\color{black} #1}} 
\newcommand{\red}{\color{black}}
\newtheorem{theorem}{Theorem}[section]
\newtheorem{rem}[theorem]{Remark}
\newtheorem{lemma}[theorem]{Lemma}
\newtheorem{corollar}[theorem]{Corollary}
\newcommand{\todo}[1]{}
\newcommand{\eps}{\varepsilon}
\newcommand{\epsgamma}[1][\gamma]{\ifthenelse{\equal{#1}{2\gamma+1}}{\varepsilon}{}}
\newcommand{\WW}{\mathbf{W}}
\newcommand{\Wl}[1][l]{\beta_{#1}} 
\newcommand{\calc}[1]{}
\newcommand{\E}[1]{\mathbb{E}\left[#1\right]} 
\renewcommand{\d}{\mathrm{d}}
\newcommand{\D}{\mathcal{D}}
\newcommand{\Lp}[1][p]{{\mathbb{L}^{#1}}} 
\newcommand{\Hk}[1][k]{{\mathbb{H}^{#1}}} 
\newcommand{\Wkp}[2]{{\mathbb{W}^{#1,#2}}} 
\newcommand{\Ltwo}{\Lp[2]}
\newcommand{\Hone}{\Hk[1]}
\newcommand{\Hmone}{\Hk[-1]}
\newcommand{\Hmonecirc}{\Hk[-1]}
\newcommand{\mDeltaN}{(-\Delta)}
\newcommand{\Vh}{\mathbb{V}_h}
\newcommand{\Vht}{{\mathbb{V}}_h}
\newcommand{\Vhc}{{\mathbb{V}}_h}
\newcommand{\dual}[1]{
\noexpandarg
\StrLen{#1}[\mystringlen]
\ifthenelse{\mystringlen=1}{{#1}'}{(#1)'}
}
\newcommand{\dpairing}[3]{\left\langle #1, #2\right \rangle_{
\noexpandarg
\ifthenelse{\equal{#3}{\Hk[1]}}{}{\dual{#3}\times #3}
}}
\newcommand{\sigmahnm}{\sigma^{n-1}}
\newcommand{\sigmahjm}{\sigma^{j-1}}
\newcommand{\sigmajm}{\sigma^{j-1}}
\begin{document}


\title{Robust a posteriori estimates for the stochastic Cahn-Hilliard equation}

\author{\v{L}ubom\'{i}r Ba\v{n}as}
\address{Department of Mathematics, Bielefeld University, 33501 Bielefeld, Germany}
\email{banas@math.uni-bielefeld.de}
\author{Christian Vieth}
\address{Department of Mathematics, Bielefeld University, 33501 Bielefeld, Germany}
 \email{cvieth@math.uni-bielefeld.de}

\thanks{Funded by the Deutsche Forschungsgemeinschaft (DFG, German Research Foundation) – SFB 1283/2 2021 – 317210226.}

\begin{abstract}
We derive a posteriori error estimates for a fully discrete finite element approximation of the stochastic Cahn-Hilliard equation.
The a posteriori bound is obtained by a splitting of the equation into a linear stochastic partial differential equation (SPDE)
and a nonlinear random partial differential equation (RPDE). 
The resulting estimate is robust with respect to the interfacial width parameter
and is computable since it involves the discrete principal eigenvalue of a linearized (stochastic) Cahn-Hilliard operator.
Furthermore, the estimate is robust with respect to topological changes as well as the intensity of the stochastic noise.
We provide numerical simulations to demonstrate the practicability of the proposed adaptive algorithm.
\end{abstract}

\subjclass[2010]{65M15, 65M50, 65M60, 65C30, 35K91, 35R60, 60H15, 60H35}

\maketitle

\section{Introduction}\label{Sec_sCH_Intro}
We study a posteriori error estimates for the numerical approximation of the stochastic Cahn-Hilliard equation
\begin{subequations}\label{StochCH}
\begin{alignat}{2}
\d u &= \Delta w \d t + \epsgamma \sigma \d \WW &&\quad \text{ in }(0,T)\times \D, \\
w &= -\varepsilon \Delta u +\varepsilon^{-1}f(u) &&\quad \text{ in }(0,T)\times \D, \\
\partial_{\vec{n}}u &= \partial_{\vec{n}}w = 0&&\quad \text{ on }(0,T)\times \partial\D,\\
u(0,\cdot)&=u_0^\varepsilon &&\quad \text{ in } \D,
\end{alignat}
\end{subequations}
where $T>0$, $\mathcal{D}\subset \mathbb{R}^d$, $d = 1,2,3$ is an open bounded domain and $\epsgamma\sigma \d\WW$ is a noise term (trace-class Wiener process)
which will be specified below.
The constant $0<\varepsilon\ll 1$ is called interfacial width parameter and is usually taken to be small.
The nonlinear term in (\ref{StochCH}b) is given as $f(u)=F'(u)=u^3-u$ where the function $F(u)=\frac{1}{4}(u^2-1)^2$ is a double-well potential.
Without loss of generality, for simplicity we assume throughout the paper that the initial condition $u_0 \equiv u_0^\varepsilon\in \Hk[1]$ satisfies $\int_\D u_0^\varepsilon\d x=0$. 
For further details on the stochastic Cahn-Hilliard equation we refer the reader to \cite{Banas19} and the references therein.

The theoretical and numerical aspects of the deterministic version of the Cahn-Hilliard equation are well studied, see for instance \cite{BloweyElliott91}, \cite{BloweyElliott92}, \cite{fp04}, \cite{fp05}, \cite{BartelsMueller2011}
and the recent review paper \cite{df20}. 
One of the main difficulties in the approximation of the Cahn-Hilliard equation 
is to derive numerical schemes that are robust with respect to the interfacial width parameter $\varepsilon$.
In \cite{fp04, fp05}  the authors propose robust and convergent numerical approximation schemes for the deterministic Cahn-Hilliard equation 
using the lower bound for the (analytic) principal eigenvalue of the linearized Cahn-Hilliard operator; the use of the principal eigenvalue
allows for estimates that depend on $\varepsilon^{-1}$ polynomially. 
The technique that employs the principal eigenvalue goes back to the seminal work \cite{abc94} which shows that
the solutions of the deterministic Cahn-Hilliard equation converge to the Hele-Shaw problem for $\varepsilon \rightarrow 0$, i.e., in the so-called sharp interface limit.
The corresponding sharp interface limit of the numerical approximation of the deterministic Cahn-Hilliard equation has been obtained in \cite{fp05}.
The ideas of \cite{fp04, fp05} have been adopted in \cite{BartelsMueller2011, BartelsMuellerQO} to derive robust computable a posteriori error bounds for the numerical approximation,
which involve a discrete version of the principal eigenvalue. 

Fewer results are available for the stochastic Cahn-Hilliard equation (\ref{StochCH}).  For the proof of existence of a unique (stochastically) strong solution we refer to the earlier result \cite{DaPratoDebussche}. 
In the recent paper \cite{Banas19} robust error estimates for the numerical approximation 
of the stochastic Cahn-Hilliard equations are shown for asymptotically small noise in the form $\varepsilon^\gamma \d\WW$ with sufficiently large scaling factor $\gamma>0$. 
We note that the existence and further properties of the principal eigenvalue in the stochastic setting are not clear.
This issue has been circumvented in \cite{Banas19} by a linearization approach around the corresponding deterministic problems which imposes the restrictive condition on the scaling $\gamma$.
Hence, \cite{Banas19} shows that the numerical approximation of the stochastic problem with asymptotically small noise converges uniformly to the deterministic Hele-Shaw problem in spatial dimension $d=2$.
{We also mention the recent analytical work \cite{BanasZhu2019} that studies the sharp interface limit of the stochastic Cahn-Hilliard equation
that leads to a stochastic version of the Hele-Shaw problem for suitable scaling of the noise 
and \cite{BanasZhu2019} which obtains the (deterministic) sharp interface limit with singular noise. For a more detailed review of existing literature  we refer the reader to the aforementioned papers.
}

Adaptivity for SPDEs is a recent area of research. 
Few results exist on practical adaptive algorithms for SPDEs, see \cite{sllg_book}, \cite{PS19} and the references therein. 
As far as we are aware, apart from the present work, there exist only two other very recent contributions
which derive rigorous a posteriori estimates for SPDEs:
the paper \cite{MajeeProhl} studies a posteriori estimates for (linear) SPDEs
and \cite{bw21} considers a monotone nonlinear  SPDE related to the stochastic total variation flow.

In order to derive the a posteriori error estimate for the numerical approximation of (\ref{StochCH}) we split the solution as $u=\tilde{u}+\hat{u}$ where $\tilde{u}$ solves the linear SPDE (\ref{CHstoch_lin})
and $\hat{u}$ solves the (nonlinear) random PDE (RPDE) (\ref{CHstoch_nonlin}) and proceed as follows:
\begin{itemize}
\item We derive a posteriori estimates for a mixed finite element approximation of the fourth order linear SPDE (\ref{CHstoch_lin}) using an analogue of the transformation approach which was employed in \cite{MajeeProhl} to derive a posteriori estimate
for linear second order SPDEs.
\item To derive a posteriori estimates for a mixed finite element approximation of the nonlinear (fourth order) RPDE (\ref{CHstoch_nonlin}) we generalize the approach for the deterministic Cahn-Hilliard
equation (see \cite{BartelsMueller2011} and the references therein) which relies on the use of the discrete counterpart of the principal eigenvalue of the corresponding linearized Cahn-Hilliard operator, cf., \cite{abc94}, \cite{fp05}.
We derive the pathwise estimate for the nonlinear RPDE, which holds on a suitable probability subset,
using the (random) linearized Cahn-Hilliard operator (\ref{Eigenvalue_sCH}).
The derived estimate involves the (computable) principal eigenvalue of the (random)
Cahn-Hilliard operator linearized at the numerical solution of (\ref{StochCH}).
The size of the probability subset for the pathwise estimate depends on the accuracy of the approximation of the linear SPDE (\ref{CHstoch_lin}), i.e., the size of the subset can be controlled in an a posteriori fashion
by computable quantities.
\item By combining the estimates for the linear SPDE and the nonlinear RPDE in Theorem~\ref{StochCH_este} we 
obtain an error estimate for the numerical approximation of the stochastic Cahn-Hilliard equation (\ref{StochCH}).
\end{itemize}

As a byproduct we obtain several generalizations of existing results.
In contrast to \cite{Banas19}, the restriction of asymptotically small noise and $d=2$ is not explicitly required in the present work, i.e., the derived estimate
is robust w.r.t. the noise intensity and also holds for $d=3$ under additional assumption on the boundedness of the solution. 
The derived estimate retains the robustness properties of its deterministic counterpart \cite{BartelsMueller2011}, i.e.,
the estimate only depends polynomially on the interfacial width parameter $\varepsilon$.
We also obtain the following two generalizations of \cite{MajeeProhl} for linear second order SPDEs: we derive an a posteriori estimate
for a mixed finite element approximation of linear fourth order parabolic SPDEs and analyze the (a posteriori) error due to the truncation of the infinite dimensional
Wiener process. Furthermore, \cite{MajeeProhl} only provides an error estimate for the numerical approximation of the transformed RPDE and does not relate it to the error of the numerical approximation of the original SPDE;
in this paper we also obtain estimates for the numerical approximation of the original SPDE problem which is the actual quantity of interest in simulation.

The rest of the paper is organized as follows.
In Section \ref {Sec_sCH_Notation} we introduce the notation and the assumptions,
and formulate the splitting of the nonlinear SPDE (\ref{StochCH}) into a linear SPDE and a nonlinear random PDE which is used to obtain the a posteriori estimates. 
In Section~\ref{Sec_sCH_Approx} we introduce a fully discrete mixed finite element approximation of (\ref{StochCH})
as well as the discrete counterpart of the continuous splitting into a discrete linear stochastic equation and  a discrete random nonlinear equation.
A posteriori estimate for the linear part of the splitting is derived in Section \ref{Subsec_sCH_EstLin}. 
In Section \ref{Subsec_sCH_EstNonLin} we derive a pathwise a posteriori estimate for the random nonlinear part of the splitting.
Finally, in  Section \ref{Subsec_sCH_Est} we combine the respective a posteriori estimates for the linear SPDE and the nonlinear RPDE
to derive an error estimate for the numerical approximation of (\ref{StochCH}).
We conclude with numerical results in Section \ref{Sec_sCH_NumResults} to illustrate the efficiency and robustness of the adaptive algorithm which is based on the derived estimates.

\section{Notation and preliminaries}\label{Sec_sCH_Notation}
Let  $\D\subset \mathbb{R}^d$  be an open bounded polyhedral domain with boundary $\partial\D$.
We denote the standard Lebesgue space of $p$-th order integrable functions on $\D$ as $\mathbb{L}^p$ and
$\Hk$ denotes the standard Sobolov space $\Wkp{k}{p}$ with $p=2$. We denote the $\mathbb{L}^2$ scalar product as $(\cdot, \cdot)= (\cdot, \cdot)_{\mathbb{L}^2}$
and the corresponding norm by $\|\cdot \| = \|\cdot \|_{\mathbb{L}^2}$.
The duality pairing between $\Hk[1]$ and its dual $\Hk[-1]$ is denoted as $\dpairing{\,\cdot\,}{\,\cdot\,}{\Hk[1]}=\dpairing{\,\cdot\,}{\,\cdot\,}{\Hk[ 1 ]}$.

For $v\in \Hk[-1]$ with $\dpairing{v}{1}{\Hk[1]}=0$, we define the  inverse Laplacian $\mDeltaN^{-1}v=:\tilde{v}$ with $\int_\D \tilde{v} = 0$
to be the unique weak solution of the Poisson equation
$$
(\nabla \tilde{v},\nabla \varphi)\equiv (\nabla \left(\mDeltaN^{-1}v\right),\nabla \varphi)=\dpairing{v}{\varphi}{\Hk[1]} \qquad \forall \varphi \in \Hone\,.
$$
Below we denote $\|v\|_{\Hk[-1]}= \| \nabla \mDeltaN^{-1}v\|$.

The noise term in (\ref{StochCH}) is assumed to be a standard $\mathbf{Q}$-Wiener process\nomenclature[B]{$\WW$}{Wiener process} 
on a filtered probability space ($\Omega$, $\mathcal{F}$, $\{\mathcal{F}_t\}_t$, $\mathbb{P}$), i.e.: 
\begin{equation}\label{wiener}
\d \WW(t,x) = \sum_{l=1}^\infty \nu_l e_l(x) \d \Wl(t),
\end{equation}
where $(e_l)_{l\in \mathbb{N}}$ is an orthonormal basis of $\Hk[4]\cap \mathbb{W}^{1,\infty}$ consisting of the eigenvector of the operator $\mathbf{Q}$ with corresponding eigenvalues $(\nu_l^2)_{l\in \mathbb{N}}$
that satisfy $\sum_{l=1}^\infty \nu_l \|e_l\|_{\Hk[4]} < \infty$, {\red $\sum_{l=1}^\infty \nu_l \|e_l\|_{W^{1,\infty}} < \infty$}.
The processes $(\Wl(t))_{l\in \mathbb{N}}$ are independent real valued Brownian motions.
Furthermore, we assume that $\sigma$ is a time-continuous, $\{\mathcal{F}_t \}_{t\in[0,T]}$-adapted,
$\Hk[4]\cap \mathbb{W}^{1,\infty}$-valued stochastic process that satisfies, 
$\mathbb{P}$-a.s. \corrl{$\int_\D \sigma(t,x)e_l(x) \d x = 0$}, 
and 
$\partial_{\vec{n}} \big(\sigma(t) e_l\big)=\partial_{\vec{n}}\big(\Delta[\sigma(t) e_l]\big)=0$ on $\partial\D$ for $l=1,\ldots,\infty$,  $t\in [0,T]$.

We recall that the weak formulation of (\ref{StochCH}) is given as (cf., \cite{DaPratoDebussche}, \cite{Banas19}): 
\begin{subequations}
\begin{alignat}{2}
\left(u(t),\varphi\right) + \int_0^t \left(\nabla w(s),\nabla \varphi\right) \d s&= \left(u_0^\varepsilon,\varphi\right)  + \epsgamma \int_0^t \left(\sigma(s)\d \WW(s),\varphi\right),\\
\left(w(t),\psi \right) &= \varepsilon \left(\nabla u(t),\nabla \psi\right) +\varepsilon^{-1}\left(f(u(t)),\psi\right),
\end{alignat}
\end{subequations}
for all $\varphi,\psi \in \Hone$ and $t\in [0,T]$.

For the analysis below it is convenient to adopt the approach introduced in \cite{DaPratoDebussche} and split the solution of \eqref{StochCH} as $u = \tilde{u}+\hat{u}$
where $\tilde{u}$ solves the following linear SPDE
\begin{subequations}\label{CHstoch_lin}
\begin{alignat}{2}
\d \tilde{u} &= \Delta \tilde{w} \d t + \epsgamma \sigma \d \WW &&\quad \text{ in }(0,T)\times \D, \\
\tilde{w} &= -\varepsilon \Delta \tilde{u} &&\quad \text{ in }(0,T)\times \D, \\
\partial_{\vec{n}}\tilde{u} &= \partial_{\vec{n}}\tilde{w} = 0&&\quad \text{ on }(0,T)\times \partial\D,\\
\tilde{u}(0,\cdot)&=0 &&\quad \text{ in } \D,
\end{alignat}
\end{subequations}
and $\hat{u}$ solves the random PDE
\begin{subequations}\label{CHstoch_nonlin}
\begin{alignat}{2}
\partial_t \hat{u} &= \Delta \hat{w}  &&\quad \text{ in }(0,T)\times \D, \\
\hat{w} &= -\varepsilon \Delta \hat{u} +\varepsilon^{-1}f(u) &&\quad \text{ in }(0,T)\times \D, \\
\partial_{\vec{n}}\hat{u} &= \partial_{\vec{n}}\hat{w} = 0&&\quad \text{ on }(0,T)\times \partial\D,\\
\hat{u}(0,\cdot)&=u_0^\varepsilon &&\quad \text{ in } \D.
\end{alignat}
\end{subequations}
\todo{$\partial_t \hat{u}$ exists in what sense?}

\section{Fully discrete finite element approximation}\label{Sec_sCH_Approx}

We consider a possibly non-uniform partition $0=t_{0}< t_1<\dots < t_N = T$ of the time interval $[0,T]$ with
time step sizes $\tau_n = t_n-t_{n-1}$. 
At time level $t_n$ we consider a quasi-uniform partition $\mathcal{T}_h^n$ of the domain $\mathcal{D}$ into simplices 
and the associated finite element space of continuous piecewise linear functions
$$
\Vh^n=\{\varphi_h\in\mathcal{C}(\bar{\D}): \varphi_h|_T \in\mathcal{P}^1(T)\quad \forall T\in \mathcal{T}_h^n\}\,.
$$
Throughout the paper we assume for simplicity that $\Vh^{n-1} \subset \Vh^n$ but this condition
can be relaxed, see Remark~\ref{rem_coarsening}. 

For an element $T\in \mathcal{T}_h^n$  we denote by $\mathcal{E}_T$ 
the set of all faces of $\partial T$.
The set of all faces of the elements of the mesh $\mathcal{T}_h^n$ is denoted as
$\mathcal{E}_h^n = \bigcup_{T \in \mathcal{T}_h^n}\mathcal{E}_T$; the diameter of $T \in \mathcal{T}_h^n$ and $e \in \mathcal{E}_h^n$ is denoted as $h_T$ and $h_e$, respectively
and $h:= \max_{T\in \mathcal{T}_h^n} h_T$. 
We split $\mathcal{E}_h^n$ into the set of all interior and boundary faces
$\mathcal{E}_h^n=\mathcal{E}_{h,\D}^n \cup\mathcal{E}_{h,\partial \D}^n$,
where
$\mathcal{E}_{h, \partial\D}^n= \{ e \in \mathcal{E}_h^n : e \subset \partial \D \}$.
Given an $e \in \mathcal{E}_h^n$ we denote by $\mathcal{N}(e)$ the set of  its nodes and for $T \in \mathcal{T}_h^n, e \in \mathcal{E}_h^n$ 
and define the local patches $\omega_T=\bigcup_{\mathcal{E}(T)\cap\mathcal{E}(T')\neq \emptyset }T'$, $\omega_e=\bigcup_{e \in \mathcal{E}(T') }T'$.

We define the $\Ltwo$-projection $P_h^n:\Ltwo\rightarrow \Vh^n$: 
\begin{align}\label{l2proj}
(P_h^nv-v,\varphi_h)&=0\quad\forall\varphi_h\in\Vh^n\,,
\end{align}
with the approximation property
$$
\|v - P_h^nv \| \leq Ch \|\nabla v\| \qquad \forall v \in \Hone\,.
$$
Furthermore, we consider the Cl\'ement-Scott-Zhang interpolation operator $C_h :\Hone \rightarrow \mathbb{V}_h^n$
with the following local approximation properties for $\psi \in \Hone$:
\begin{align}
\label{ch_r}
\|\psi -C_h^n\psi\|_{L^2(T)} +h_T\|\nabla[\psi-C_h^n\psi]\|_{L^2(T)} & \leq C^*h_T\|\nabla \psi\|_{L^2(\omega_T)}  \qquad \forall T\in\mathcal{T}_h^n\,,\\
\label{ch_j}
\|\psi -C_h^n \psi \|_{L^2(e)} & \leq C^*h_e^{\frac{1}{2}}\|\nabla \psi\|_{L^2(\omega_e)}\qquad \forall e \in \mathcal{E}_h^n\,,
\end{align}
where the constant $C^* >0$ only depends on the minimum angle of the mesh $\mathcal{T}_h^n$, see for instance \cite[Def.~3.8]{BartelsNumNonlinear}.

The fully discrete numerical approximation of the stochastic Cahn-Hilliard equation \eqref{StochCH} is given as follows:
set $u_h^0= P_h^0u_0^\varepsilon$, fix the noise truncation parameter $0<r<\infty$ and for $n=1,\ldots,N$ determine the numerical approximations $u_h^n,w_h^n\in \Vh^n$ as the solution of
\begin{subequations}\label{StochCH_discr}
\begin{alignat}{2}
\frac{1}{\tau_n}\left(u_h^n- u_h^{n-1},\varphi_h\right) &+ \left(\nabla w_h^n,\nabla \varphi_h\right) = \epsgamma  \left(\frac{\sigmahnm\Delta_n \WW^r}{\tau_n},\varphi_h\right) \\ 
\left(w_h^n,\varphi_h\right) &= \varepsilon \left(\nabla u_h^n,\nabla \varphi_h\right) +\varepsilon^{-1}\left(f(u_h^n),\varphi_h\right) &&\quad \forall \varphi_h \in \Vh^n,
\end{alignat}
\end{subequations}
where $\sigmahnm \Delta_n \WW^r = \Big( \sigma(t_{n-1})\sum_{l=1}^r \nu_l e_l\Delta_n \Wl \Big)$
with discrete Brownian increments
$$
\Delta_n \Wl= \Wl(t_n)-\Wl(t_{n-1})  \qquad l=1,\ldots, r\,.
$$ 

We define the piecewise linear time interpolant $u_{h,\tau}$ of the numerical solution $\{u_h^n\}_{n=0}^N$ as
\begin{equation}\label{interpol}
u_{h,\tau}(t)=  \frac{t-t_{n-1}}{\tau_n}u_h^{n}+\left(1-\frac{t-t_{n-1}}{\tau_n}\right)u_h^{n-1},
\end{equation}
for $t\in [t_{n-1},t_{n}]$, $n=1,\ldots,N$; analogically we define $w_{h,\tau}$ as the interpolant of  $\{u_h^n\}_{n=0}^N$.

\subsection{The discrete splitting}\label{Subsec_sCH_discrSplitting}
We introduce a discrete analogue of the splitting \eqref{CHstoch_lin}, \eqref{CHstoch_nonlin}: we split the discrete solution as $u_h^n = \tilde{u}_h^n+ \hat{u}_h^n \in \Vh^n$.
The solutions $\tilde{u}_h^n\in \Vht^n$, $\hat{u}_h^n\in \Vhc^n$ satisfy (\ref{CHstoch_lin_discr}), (\ref{CHstoch_nonlin_discr}), respectively, which are the respective discrete counterparts of 
(\ref{CHstoch_lin}) and (\ref{CHstoch_nonlin}).

For $\tilde{u}_h^0=0$ the solutions $\tilde{u}_h^n\in \Vht^n$, $n =1,\ldots,{N}$ satisfy
\begin{subequations}\label{CHstoch_lin_discr}
\begin{alignat}{2}
\left(\frac{\tilde{u}_h^n- \tilde{u}_h^{n-1}}{{\tau_n}},\varphi_h\right) + \left(\nabla \tilde{w}_h^n,\nabla \varphi_h\right) & = \epsgamma  \left(\frac{\sigmahnm\Delta_n \WW^r}{{\tau_n}},\varphi_h\right)  &&\quad \forall \varphi_h \in \Vht^n,\\
\left(\tilde{w}_h^n,\varphi_h\right) &= \varepsilon \left(\nabla \tilde{u}_h^n,\nabla \varphi_h\right) &&\quad \forall \varphi_h \in \Vht^n.
\end{alignat}
\end{subequations}
For $\hat{u}_h^0=u_h^0$ the solutions $\hat{u}_h^n \in \Vhc^n$, $n=1,\ldots,N$ satisfy
\begin{subequations}\label{CHstoch_nonlin_discr} 
\begin{alignat}{2}
\frac{1}{\tau_n}\left(\hat{u}_h^n-\hat{u}_h^{n-1},\varphi_h\right) + \left(\nabla \hat{w}_h^n,\nabla \varphi_h\right) & = 0  &&\quad \forall \varphi_h \in \Vhc^n,\\
\left(\hat{w}_h^n,\varphi_h\right) &= \varepsilon \left(\nabla \hat{u}_h^n,\nabla \varphi_h\right) +\varepsilon^{-1}\left(f(u_h^n),\varphi_h\right) &&\quad \forall \varphi_h \in \Vhc^n.
\end{alignat}
\end{subequations}

The piecewise linear interpolants $\tilde{u}_{h,\tau}$, $\tilde{w}_{h,\tau}$ and $\hat{u}_{h,\tau}$, $\hat{u}_{h,\tau}$
of the respective solutions of (\ref{CHstoch_lin_discr}) and (\ref{CHstoch_nonlin_discr}) are defined analogically to (\ref{interpol}).
Note that $u_{h,\tau}= \tilde{u}_{h,\tau}+\hat{u}_{h,\tau}$.

\begin{rem}\label{rem_spaces}
The solutions of (\ref{CHstoch_lin_discr}), (\ref{CHstoch_nonlin_discr}) may be sought
in possibly different finite element spaces in order to increase the efficiency of the adaptive algorithm.
In scenarios of practical interest the error is typically dominated by the approximation of the nonlinear part.
Typically, the solution of the linear part (\ref{CHstoch_lin_discr}) can  be approximated on a coarser
mesh than the nonlinear equation (\ref{CHstoch_nonlin_discr}) (and (\ref{StochCH_discr})), cf. Figures~\ref{Fig_CHstoch_uh}~and~\ref{Fig_CHstoch_tildeuh_mesh} below.

We also remark that the existence and uniqueness and measurability of the numerical solutions in (\ref{StochCH_discr}), (\ref{CHstoch_lin_discr}), (\ref{CHstoch_nonlin_discr}) follows
by standard arguments, cf. \cite{fp05}, \cite{Banas19}.
\end{rem}

\section{Estimates for the linear stochastic equation}\label{Subsec_sCH_EstLin}
In this section we estimate the approximation error $\tilde{e}= \tilde{u}_{h,\tau}-\tilde{u}$ for the linear stochastic equation \eqref{CHstoch_lin}.

The weak formulation of \eqref{CHstoch_lin} reads as  
\begin{subequations}\label{StochCHu_tildeweak}
\begin{align}
(\tilde{u}(t),\varphi)+\int_0^t(\nabla \tilde{w}(s),\nabla \varphi)\d s &= \epsgamma \int_0^t(\sigma(s)\d \WW(s),\varphi),\\
(\tilde{w}(t),\varphi)&=\varepsilon(\nabla \tilde{u}(t),\nabla \varphi) \qquad\qquad \forall \varphi\in \Hone,
\end{align}
\end{subequations}
for $t\in(0,T)$, $\mathbb{P}$-a.s.

Analogically to \cite{MajeeProhl} we introduce the transformation
\begin{align}\label{ytrans}
y(t,x)&=\tilde{u}(t,x)-\epsgamma \int_0^t\sigma(s,x)\d \WW(s,x),
\end{align}
and define
\begin{align*}
y_w(t,x)&=-\varepsilon \Delta y(t,x)=-\varepsilon \Delta \tilde{u}(t,x)+\varepsilon \Delta\left(\epsgamma \int_0^t\sigma(s,x)\d \WW(s,x)\right)\\
&\equiv\, \tilde{w}(t,x)+ \varepsilon\Delta\left(\epsgamma \int_0^t\sigma(s,x)\d \WW(s,x)\right).
\end{align*}
Note that from the assumptions above it follows that $\partial_{\vec{n}}y=\partial_{\vec{n}}y_w=0$. 

Then $(y,y_w)$ \corrl{$\mathbb{P}$-a.s.} solve the random PDE 
\begin{subequations}\label{rpde}
\begin{align}
(y(t),\varphi)+\int_0^t(\nabla y_w(s),\nabla \varphi)\d s &= \epsgamma \int_0^t(\nabla g(s),\nabla \varphi)\d s\quad \forall \varphi\in\Hone,\\
(y_w(t),\varphi)&=\varepsilon(\nabla y(t),\nabla \varphi),
\end{align}
\end{subequations}
for all $t\in(0,T)$, with $y(0)=0$, where the process $g$ is given by
$$
g(t)=\varepsilon\Delta\int_0^t\sigma(s)\d \WW(s)=\varepsilon\sum_{l=1}^\infty\int_0^t\nu_l\Delta[\sigma(s)e_l]\d \Wl(s). 
$$

\corrl{From \eqref{rpde} it follows by standard arguments, cf. \cite[Section 2]{DaPratoDebussche}, 
that the time derivative of $y$ exists and satisfies $\partial_t y\in L^2(0,T; \mathbb{H}^{-1})$. Hence, \eqref{rpde}}
is equivalent to
\todo{Spaces for $y,y_w$?}
\begin{subequations}\label{eq_randPDEy}
\begin{align}
\dpairing{\partial_t y(t)}{\varphi}{\Hk[1]}+(\nabla y_w(t),\nabla \varphi) &= \epsgamma (\nabla g(t),\nabla \varphi),\\
(y_w(t),\varphi)&=\varepsilon(\nabla y(t),\nabla \varphi),\\
y(0)&= 0.\nonumber
\end{align}
\end{subequations}

We consider the following numerical scheme for the approximation of $y$ and $y_w$: set $y_h^0 \equiv 0\in \Vht^0$ and for $n = 1,\ldots,N$ find $y_h^n,y_{w,h}^n\in \Vht^n$ such that
\begin{subequations}\label{CH_discr_y}
\begin{align}
\left(\frac{y_h^n-y_h^{n-1}}{{\tau_n}},\varphi_h\right)+(\nabla y_{w,h}^n,\nabla \varphi_h) &= \epsgamma (\nabla g^{r,n},\nabla \varphi_h),\\
(y_{w,h}^n,\varphi_h)&=\varepsilon(\nabla y_h^n,\nabla \varphi_h),
\end{align}
\end{subequations}
for all $\varphi_h\in \Vht^n$, where
$$
g^{r,n}=\varepsilon\sum_{j=1}^n\Delta \int_{t_{j-1}}^{t_j} \sigmahjm\d \WW^r(s)
=\varepsilon\sum_{j=1}^n \sum_{l=1}^r\int_{t_{j-1}}^{t_j}\nu_l \Delta [\sigmahjm e_l]\d \Wl(s)\,,
$$
is the truncated version of 
$$
g^{n}=\varepsilon\sum_{j=1}^n\Delta \int_{t_{j-1}}^{t_j} \sigmahjm\d \WW(s)
=\varepsilon\sum_{j=1}^n \sum_{l=1}^\infty\int_{t_{j-1}}^{t_j}\nu_l \Delta[\sigmahjm e_l]\d \Wl(s).
$$

In the next lemma we formulate the discrete counterpart of the transformation (\ref{ytrans})
under the condition that the finite element spaces on all time levels are nested.
\begin{lemma}\label{Lem_stochCHdiscrtrafo}
Assume that \corrl{$\Vht^{n-1}\subset \Vht^{n}$}, $n=0,\dots,N$.
Then the following relation holds $\mathbb{P}$-a.s. between the solutions of \eqref{CH_discr_y} and \eqref{CHstoch_lin_discr}:
\begin{subequations}\label{stochCHdiscrtrafo}
\begin{align}\label{dta}
\tilde{u}_h^n &= y_h^n + \epsgamma P_h^n\sum_{j=1}^n \int_{t_{j-1}}^{t_j} \sigmahjm\d \WW^r(s),\\
\tilde{w}_h^n &= y_{w,h}^n - \epsgamma P_h^n g^{r,n}\,.
\end{align}
\end{subequations}
\end{lemma}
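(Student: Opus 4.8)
The plan is to prove \eqref{stochCHdiscrtrafo} by induction on $n$, showing that the right-hand sides of \eqref{dta} satisfy exactly the discrete linear scheme \eqref{CHstoch_lin_discr}, and then invoke uniqueness of the numerical solution (Remark~\ref{rem_spaces}). The base case $n=0$ is immediate since $\tilde u_h^0 = 0 = y_h^0$ and the sum is empty. For the induction step, define the candidates $a^n := y_h^n + \epsgamma P_h^n\sum_{j=1}^n \int_{t_{j-1}}^{t_j}\sigmahjm\,\d\WW^r(s)$ and $b^n := y_{w,h}^n - \epsgamma P_h^n g^{r,n}$, and verify that $(a^n,b^n)$ solves \eqref{CHstoch_lin_discr}.

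First I would check the second equation (\ref{CHstoch_lin_discr}b). Since $g^{r,n} = \varepsilon\sum_{j=1}^n \Delta\int_{t_{j-1}}^{t_j}\sigmahjm\,\d\WW^r(s)$, testing with $\varphi_h\in\Vht^n$ and using the definition of $P_h^n$, we have $(P_h^n g^{r,n},\varphi_h) = (g^{r,n},\varphi_h) = \varepsilon(\Delta S^n,\varphi_h)$ where $S^n := \sum_{j=1}^n\int_{t_{j-1}}^{t_j}\sigmahjm\,\d\WW^r(s)$; integrating by parts (the boundary terms vanish by the assumed Neumann conditions $\partial_{\vec n}(\sigma(t)e_l)=0$) this equals $-\varepsilon(\nabla S^n,\nabla\varphi_h) = -\varepsilon(\nabla P_h^n S^n,\nabla\varphi_h)$ — wait, this last step needs care, since $P_h^n$ and $\nabla$ do not commute. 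The cleaner route is: $\varepsilon(\nabla a^n,\nabla\varphi_h) = \varepsilon(\nabla y_h^n,\nabla\varphi_h) + \epsgamma\varepsilon(\nabla P_h^n S^n,\nabla\varphi_h) = (y_{w,h}^n,\varphi_h) + \epsgamma\varepsilon(\nabla P_h^n S^n,\nabla\varphi_h)$ using (\ref{CH_discr_y}b). So I need $\varepsilon(\nabla P_h^n S^n,\nabla\varphi_h) = -(P_h^n g^{r,n},\varphi_h) = -(g^{r,n},\varphi_h)$. Now $(g^{r,n},\varphi_h) = \varepsilon(\Delta S^n,\varphi_h) = -\varepsilon(\nabla S^n,\nabla\varphi_h)$, so I would need $(\nabla P_h^n S^n,\nabla\varphi_h) = (\nabla S^n,\nabla\varphi_h)$, which is generally false. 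This signals that the correct reading is that $g^{r,n}$ already encodes the "discrete Laplacian": more precisely, one should interpret $-\varepsilon\Delta$ applied inside $g^{r,n}$ in a way compatible with the scheme, or equivalently recognize that the identity to prove is consistent because both $\tilde w_h^n$ and $y_{w,h}^n$ are defined through the same weak Laplacian $\varepsilon(\nabla\cdot,\nabla\cdot)$. I expect the resolution is that one does not try to match (\ref{CHstoch_lin_discr}b) pointwise but rather substitutes \eqref{stochCHdiscrtrafo} into \emph{both} equations of \eqref{CHstoch_lin_discr} simultaneously and checks consistency; the $g^{r,n}$ terms are designed precisely so that the Laplacian-type terms telescope correctly.

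For the first equation, I would compute $\frac{1}{\tau_n}(a^n - a^{n-1},\varphi_h) + (\nabla b^n,\nabla\varphi_h)$ for $\varphi_h\in\Vht^n$. Using $\Vht^{n-1}\subset\Vht^n$, the $\Ltwo$-projection satisfies $P_h^{n-1} = P_h^n P_h^{n-1}$ and, crucially, $(P_h^n v - P_h^{n-1}v,\varphi_h) = (v - P_h^{n-1}v,\varphi_h)$ for $\varphi_h\in\Vht^n$ is \emph{not} quite right either; rather $(P_h^n v,\varphi_h) = (v,\varphi_h) = (P_h^{n-1}v,\varphi_h)$ when $\varphi_h\in\Vht^{n-1}$, but we need it for all $\varphi_h\in\Vht^n$. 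The key identity I would use is $(P_h^n v,\varphi_h) = (v,\varphi_h)$ for all $\varphi_h\in\Vht^n$, applied to $v = S^n$ and $v = S^{n-1}$ — but $S^{n-1}$ was projected with $P_h^{n-1}$, so $a^{n-1} = y_h^{n-1} + \epsgamma P_h^{n-1}S^{n-1}$ and $(P_h^{n-1}S^{n-1},\varphi_h) = (S^{n-1},\varphi_h)$ only for $\varphi_h\in\Vht^{n-1}$. Here the nestedness is essential and the argument is: since $\varphi_h\in\Vht^n\supset\Vht^{n-1}$, actually we cannot drop the projection. So the telescoping must be done keeping $P_h^n$ and $P_h^{n-1}$ distinct, and the increment $a^n - a^{n-1} = (y_h^n - y_h^{n-1}) + \epsgamma(P_h^n S^n - P_h^{n-1}S^{n-1})$. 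Writing $P_h^n S^n - P_h^{n-1}S^{n-1} = P_h^n(S^n - S^{n-1}) + (P_h^n - P_h^{n-1})S^{n-1} = P_h^n\int_{t_{n-1}}^{t_n}\sigmahnm\,\d\WW^r(s) + (P_h^n - P_h^{n-1})S^{n-1}$.

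The main obstacle, and the crux of the whole proof, is handling the term $(P_h^n - P_h^{n-1})S^{n-1}$: this is precisely where the nestedness hypothesis $\Vht^{n-1}\subset\Vht^n$ enters, and one must show its contribution is absorbed correctly — or rather, that it cancels against the corresponding discrepancy in the $g^{r,n}$ terms of $b^n$ versus $b^{n-1}$ and in $y_{w,h}^n$. I would verify that $(\nabla b^n,\nabla\varphi_h) = (\nabla y_{w,h}^n,\nabla\varphi_h) - \epsgamma(\nabla P_h^n g^{r,n},\nabla\varphi_h)$, plug in (\ref{CH_discr_y}a) to replace $(\nabla y_{w,h}^n,\nabla\varphi_h)$ by $-\frac{1}{\tau_n}(y_h^n - y_h^{n-1},\varphi_h) + \epsgamma(\nabla g^{r,n},\nabla\varphi_h)$, and then collect all terms: the $(\nabla g^{r,n},\nabla\varphi_h)$ from the scheme, the $-\epsgamma(\nabla P_h^n g^{r,n},\nabla\varphi_h)$ from $b^n$, and the new noise increment $\frac{\epsgamma}{\tau_n}(P_h^n\int_{t_{n-1}}^{t_n}\sigmahnm\,\d\WW^r,\varphi_h)$ from $a^n - a^{n-1}$, which should reduce exactly to the right-hand side $\epsgamma(\frac{\sigmahnm\Delta_n\WW^r}{\tau_n},\varphi_h)$ of (\ref{CHstoch_lin_discr}a). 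The identity $(\nabla g^{r,n},\nabla\varphi_h) = (\nabla P_h^n g^{r,n},\nabla\varphi_h)$ is false in general, so the genuine content is that the leftover $\epsgamma(\nabla(g^{r,n} - P_h^n g^{r,n}),\nabla\varphi_h)$ together with the projection-jump term $(\nabla(P_h^n - P_h^{n-1})S^{n-1},\nabla\varphi_h)$ and the definition $g^{r,n} = \varepsilon\Delta S^n$ conspire (via integration by parts, Neumann boundary conditions, and the telescoping $g^{r,n} - g^{r,n-1} = \varepsilon\Delta\int_{t_{n-1}}^{t_n}\sigmahnm\,\d\WW^r$) to vanish when tested against $\varphi_h\in\Vht^n$. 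Establishing this cancellation carefully — i.e., that $g^{r,n}$ is built so that $\tilde w_h^n$ defined by \eqref{dta} genuinely satisfies the weak-Laplacian relation $(\tilde w_h^n,\varphi_h) = \varepsilon(\nabla\tilde u_h^n,\nabla\varphi_h)$ — is the step requiring the most bookkeeping, after which uniqueness of $(\tilde u_h^n,\tilde w_h^n)$ closes the induction.
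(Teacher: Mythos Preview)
Your overall strategy---induct on $n$, define the candidate pair $(a^n,b^n)$ via the right-hand sides of \eqref{stochCHdiscrtrafo}, check it satisfies \eqref{CHstoch_lin_discr}, and invoke uniqueness---matches the paper's. The paper does the case $n=1$ by adding and subtracting the noise increment in both equations of \eqref{CHstoch_lin_discr} and then inserting the projection $P_h^1$ ``by the definition of the orthogonal projection,'' and dispatches the inductive step in one line via the nestedness of the projections.

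The genuine gap in your proposal is that it stops at exactly the point where the work begins. You correctly isolate the obstruction---the $\Ltwo$-projection satisfies $(P_h^n v,\varphi_h)=(v,\varphi_h)$ but not $(\nabla P_h^n v,\nabla\varphi_h)=(\nabla v,\nabla\varphi_h)$---and then merely \emph{assert} that the leftover terms ``conspire \dots\ to vanish when tested against $\varphi_h\in\Vht^n$,'' deferring this as ``the step requiring the most bookkeeping.'' But there is no such cancellation to exhibit: after substitution the first equation of \eqref{CHstoch_lin_discr} carries the residual $(\nabla(P_h^n g^{r,n}-g^{r,n}),\nabla\varphi_h)$ and the second carries $\varepsilon(\nabla(P_h^n S^n-S^n),\nabla\varphi_h)$; these sit in separate equations and do not offset one another. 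Your instinct that something delicate happens at the projection step is therefore well placed, and the paper's own argument passes over the same point in a single phrase. A complete proof would require either an additional hypothesis (for instance that the noise increments already lie in $\Vht^n$) or an acknowledgment that the identities hold only up to a projection error of the type later collected in $\eta_{\text{NOISE},2}^n$.
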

\begin{proof}
We show the statement for $n=1$. The statement for $n=2,\dots, N$ then follows by induction due to the fact that $P_h^n P_h^j=P_h^n$ for $j<n$, since $\Vh^j\subset \Vh^n$.

We consider equation \eqref{CHstoch_lin_discr} for $n=1$, recall $\tilde{u}_h^{0}=0$ and obtain after adding and subtracting the corresponding terms that
\begin{align*}
\left(\frac{(\tilde{u}_h^1- \sigma^1\Delta_1 \WW^r)}{{\tau_1}},\varphi_h\right) + \left(\nabla (\tilde{w}_h^1+\sigma^0 \Delta_1 \WW^r),\nabla \varphi_h\right)
               & =  \left(\nabla \sigma^0 \Delta_1 \WW^r,\nabla \varphi_h\right),\\
\left(\tilde{w}_h^1+ \eps \Delta (\sigma^0\Delta_1\WW^r),\varphi_h\right) &= \varepsilon \left(\nabla (\tilde{u}_h^1-\sigma^0 \Delta_1 \WW^r),\nabla \varphi_h\right)\,.
\end{align*}
where we used that integration by parts implies $\left(\Delta (\sigma^0\Delta_1\WW^r),\varphi_h\right) = -\left(\nabla \sigma^0\Delta_1\WW^r,\nabla \varphi_h\right)$.

On noting that $y_h^0=0$ and the definition of the orthogonal projection (\ref{l2proj})
we deduce that 
$\tilde{y}_h^1 := \tilde{u}_h^1- P_h^1\sigma^1\Delta_1 \WW^r$, $\tilde{y}_{w,h}^1 := \tilde{w}_h^1 + \eps P_h^1\Delta (\sigma^0\Delta_1\WW^r)$
solve \eqref{CH_discr_y} for $n=1$ and by uniqueness of the solutions of \eqref{CH_discr_y}, \eqref{CHstoch_lin_discr} this implies the statement for $n=1$.

The rest of the proof follows by induction.

\end{proof}

\corrl{
\begin{rem}\label{rem_coarsening}
For simplicity, we assume in Lemma~\ref{Lem_stochCHdiscrtrafo}, that the finite element spaces satisfy {$\Vht^{n-1}\subset \Vht^{n}$},
i.e., we neglect the error due to the mesh coarsening.
In the general case the discrete transformation (\ref{dta}) takes the form
$$
\tilde{u}_h^n = y_h^n + \epsgamma \sum_{j=1}^n \int_{t_{j-1}}^{t_j} P_h^n\circ P_h^{n-1}\circ \cdots \circ P_h^j \sigma(t_{j-1})\d \WW^r(s)\,,
$$
and {\red the coarsening errors can be analyzed analogically to \cite{bw21}}.
\end{rem}
}

The piecewise linear time interpolants of the solutions of (\ref{CH_discr_y}) (constructed analogically to (\ref{interpol})) are denoted as $y_{h,\tau}$, $y_{w,h,\tau}$.
Note, that by definition 
$$
\partial_t y_{h,\tau}(t)=\frac{y_h^n-y_h^{n-1}}{\tau_n}\qquad \text{for} \qquad t\in(t_{n-1},t_n).
$$
Furthermore, we define the following piecewise constant time interpolants on $(0,T)$
$$
\bar{g}^r(t)=g^{r,n},\qquad \bar{g}(t)=g^n \quad \text{for } t\in (t_{n-1},t_n].
$$

The above interpolants satisfy
\begin{subequations}\label{Eq_yhtau}
\begin{align}
(\partial_t y_{h,\tau}(t),\varphi)+ (\nabla y_{w,h,\tau}(t), \nabla \varphi)&=\epsgamma(\nabla \bar{g}^r(t),\nabla \varphi)+ \dpairing{\mathcal{R}_y(t)}{\varphi}{\Hk[1]},\\
-(y_{w,h,\tau}(t),\varphi)&=-\varepsilon (\nabla y_{h,\tau}(t),\nabla \varphi)+ \dpairing{\mathcal{S}_y(t)}{\varphi}{\Hk[1]}\qquad \forall \varphi\in \Hone,
\end{align}
\end{subequations}
where we define
\begin{align*}
\dpairing{\mathcal{R}_y(t)}{\varphi}{\Hk[1]}&=\left({\partial_t y_{h,\tau}(t)},\varphi\right)+(\nabla y_{w,h,\tau}(t),\nabla \varphi)-\epsgamma(\nabla \bar{g}^r(t),\nabla \varphi),\\
\dpairing{\mathcal{S}_y(t)}{\varphi}{\Hk[1]}&=-(y_{w,h,\tau}(t),\varphi)+\varepsilon(\nabla y_{h,\tau}(t),\nabla\varphi),
\end{align*}
for $t\in (0,T)$.

\begin{rem}\label{Rem_estRySy}
The above residuals can be estimated by computable quantities as follows.  
On noting \eqref{CH_discr_y} we deduce for any $\varphi\in \Hone$, $\varphi_h\in \Vht^n$ and for $t\in (t_{n-1},t_n]$ that
\begin{align*}
\dpairing{\mathcal{R}_y(t)}{\varphi}{\Hk[1]}&=\left(\frac{y_h^n-y_h^{n-1}}{\tau_n},\varphi-\varphi_h\right)+(\nabla y_{w,h}^n,\nabla [\varphi-\varphi_h])\\
&\quad -\epsgamma(\nabla g^{r,n},\nabla [\varphi-\varphi_h]) + (\nabla [y_{w,h,\tau}(t)-y_{w,h}^n],\nabla \varphi),\\
\dpairing{\mathcal{S}_y(t)}{\varphi}{\Hk[1]}&=(y_{w,h}^n-y_{w,h,\tau}(t),\varphi)+(y_{w,h}^n,\varphi_h-\varphi)\\
&\quad +\varepsilon(\nabla [y_{h,\tau}(t)-y_h^n],\nabla\varphi) +\varepsilon(\nabla y_h^n,\nabla[\varphi-\varphi_h]).
\end{align*}
Setting $\varphi_h=C_h^n\varphi\in \Vht^n$ we obtain after an element-wise integration by parts using (\ref{ch_j}), (\ref{ch_r}), cf.,~e.g.,~\cite[Prop.~6.3]{BartelsNumNonlinear}, that:
\begin{align*}
\dpairing{\mathcal{R}_y(t)}{\varphi}{\Hk[1]}&\le 
(C^*\eta_{\mathrm{SPACE},1}^n+\eta_{\mathrm{TIME},1}^n)\|\nabla \varphi\|=:\mu_{-1}(t)\|\nabla \varphi\|\,,
\end{align*}
and
\begin{align*}
\dpairing{\mathcal{S}_y(t)}{\varphi}{\Hk[1]}&\le \eta_{\mathrm{TIME},2}^n\|\varphi\|+(\eta_{\mathrm{TIME},3}^n+\eta_{\mathrm{SPACE},2}^n+C^*\eta_{\mathrm{SPACE},3}^n)\|\nabla \varphi\|\\
&=:\mu_0(t)\|\varphi\|+\mu_1(t)\|\nabla \varphi\|,
\end{align*}
with the error indicators
\begin{align*}
\eta_{\mathrm{SPACE},1}^n&=
\left(\sum_{T\in\mathcal{T}_h^n } h_T^2 \|\tau_n^{-1}(y_h^n-y_h^{n-1}) + \Delta\epsgamma g^{r,n}\|_{L^2(T)}^2\right)^{1/2}\\
&\qquad+\left(\sum_{e\in \mathcal{E}_h^n} h_e\|[ \nabla {\red y_{w,h}^n}\cdot\vec{n}_e]_e\|_{L^2(e)}^2\right)^{1/2},\\
\eta_{\mathrm{SPACE},2}^n&=\left(\sum_{T\in\mathcal{T}_h^n } h_T^2 \|y_{w,h}^n\|_{L^2(T)}^2\right)^{1/2},\\
\eta_{\mathrm{SPACE},3}^n&=
\left(\varepsilon\sum_{e\in \mathcal{E}_h^n} h_e\|[\nabla y_h^n\cdot\vec{n}_e]_e\|_{L^2(e)}^2\right)^{1/2},\\
\end{align*}
where $[ \nabla u \cdot\vec{n}_e]_e := \nabla u|_{T_1} \cdot \vec{n}_1 + \nabla u|_{T_2} \cdot \vec{n}_2$ for $e= \overline{T}_1\cap \overline{T}_2$ with the vectors
$\vec{n}_1$, $\vec{n}_2$ being the respective outer unit normals to the elements ${T}_1$, ${T}_2\in \mathcal{T}_h^n$ at $e\in \mathcal{E}_h^n$.
Furthermore, the time indicators take the form
\begin{align*}
\eta_{\mathrm{TIME},1}^n&=\|\nabla [y_{w,h}^{n-1}-y_{w,h}^n]\|,\\
\eta_{\mathrm{TIME},2}^n & =\|y_{w,h}^{n-1}-y_{w,h}^n\|,\\
\eta_{\mathrm{TIME},3}^n & =\varepsilon\|\nabla [y_{h}^{n-1}-y_{h}^n]\|.
\end{align*}
\end{rem}

\corrl{In the next lemma we derive an a posteriori error estimate for the numerical approximation \eqref{CH_discr_y} of the linear RPDE \eqref{eq_randPDEy}
which involves the computable error indicators from Remark~\ref{Rem_estRySy}, and, in addition includes the error due to the noise approximation
\begin{align*}
\eta_{\text{NOISE},1}^n = 
& \tau_n\sum_{j=1}^n \tau_j  \sum_{l=r+1}^\infty\nu_l^2\left\| \nabla(\sigmajm e_l)\right\|^2\\
& + \tau_n \sum_{j=1}^n \sum_{l=1}^\infty \nu_l^2\int_{t_{j-1}}^{t_j}\left\|\nabla\big(\{\sigma(s)-\sigmajm\}e_l\big)\right\|^2\d s\\
&+\int_{t_{n-1}}^{t_{n}} \sum_{l=1}^\infty \nu_l^2\int_{t}^{t_{n}}\left\|\nabla(\sigma(s)e_l)\right\|^2\d s\d t\,.
\end{align*}

}

\begin{lemma}\label{Lemma_CHesty}
There exists a constant $C>0$ such that the following error estimate holds
\begin{align*}
\sup_{t\in[0,T]}& \E{\|y_{h,\tau}(t)-y(t)\|_{\Hmonecirc}^2} +\varepsilon \int_{0}^{T}\E{\|\nabla[ y_{h,\tau}(s)-y(s)]\|^2}\d s\\
&\le C\int_{0}^{T} \E{T\mu_{-1}^2(s) + \sqrt{\frac{T}{\eps}}\mu_0^2(s)+ \varepsilon^{-1} \mu_1^2(s)} \d s  
+ C\epsgamma[2\gamma+1]\sum_{n=1}^N \E{\eta_{\text{NOISE},1}^n} \,.
\end{align*}
\nomenclature[B]{$\eta_{\mathrm{NOISE}}$}{Error estimator due to the approximation/truncation of the noise term}
\end{lemma}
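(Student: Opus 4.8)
The plan is to test the error equation for $e_y := y_{h,\tau}-y$ with the inverse Laplacian of $e_y$ (i.e.\ work in the $\Hmonecirc$ norm), since the $\Hmonecirc$ scalar product is the natural energy for a fourth-order equation in mixed form, and then absorb the nonlinearity-free right-hand side using the residuals $\mathcal{R}_y$, $\mathcal{S}_y$ together with the noise discrepancy. Concretely: subtract \eqref{eq_randPDEy} from \eqref{Eq_yhtau} to obtain, for a.e.\ $t$,
\begin{align*}
\dpairing{\partial_t e_y(t)}{\varphi}{\Hk[1]}+(\nabla[y_{w,h,\tau}-y_w](t),\nabla\varphi)
&=\epsgamma(\nabla[\bar g^r-g](t),\nabla\varphi)+\dpairing{\mathcal{R}_y(t)}{\varphi}{\Hk[1]},\\
-(y_{w,h,\tau}-y_w)(t),\varphi)&=-\varepsilon(\nabla e_y(t),\nabla\varphi)+\dpairing{\mathcal{S}_y(t)}{\varphi}{\Hk[1]}.
\end{align*}
Note $\int_\D e_y\,\d x=0$ (both $y$ and $y_{h,\tau}$ are mean-zero by the assumptions on $\sigma$ and the scheme), so $\mDeltaN^{-1}e_y$ is well defined. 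Choosing $\varphi=\mDeltaN^{-1}e_y$ in the first equation gives $\tfrac12\tfrac{\d}{\d t}\|e_y\|_{\Hmonecirc}^2 + (\nabla[y_{w,h,\tau}-y_w],\nabla\mDeltaN^{-1}e_y)=\cdots$. Using the second equation to rewrite $(\nabla[y_{w,h,\tau}-y_w],\nabla\mDeltaN^{-1}e_y)=(y_{w,h,\tau}-y_w,e_y)=\varepsilon\|\nabla e_y\|^2-\dpairing{\mathcal{S}_y}{e_y}{\Hk[1]}$ produces the dissipative term $\varepsilon\|\nabla e_y\|^2$ on the left.

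The estimate then follows from bounding each term on the right. The residual terms are controlled by Remark~\ref{Rem_estRySy}: $\dpairing{\mathcal{R}_y}{\mDeltaN^{-1}e_y}{\Hk[1]}\le\mu_{-1}\|\nabla\mDeltaN^{-1}e_y\|=\mu_{-1}\|e_y\|_{\Hmonecirc}$, and $\dpairing{\mathcal{S}_y}{e_y}{\Hk[1]}\le\mu_0\|e_y\|+\mu_1\|\nabla e_y\|$; here the $\mu_0\|e_y\|$ term is the one requiring care — interpolating $\|e_y\|\le\|e_y\|_{\Hmonecirc}^{1/2}\|\nabla e_y\|^{1/2}$ and applying Young's inequality with weights chosen to split between the $\varepsilon\|\nabla e_y\|^2$ dissipation and the $\|e_y\|_{\Hmonecirc}^2$ term yields the factor $\sqrt{T/\varepsilon}$ in front of $\mu_0^2$ and $\varepsilon^{-1}$ in front of $\mu_1^2$. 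The stochastic term $\epsgamma(\nabla[\bar g^r-g],\nabla\mDeltaN^{-1}e_y)$ is the delicate one: one takes expectations, and after time integration the martingale part of $\bar g^r-g$ would vanish in expectation \emph{if} we had an Itô-type argument, but since we are integrating a deterministic-looking bilinear form pathwise we instead estimate $|\E{\int_0^t(\nabla[\bar g^r - g](s),\nabla\mDeltaN^{-1}e_y(s))\d s}|$ via Cauchy--Schwarz and an Itô isometry applied to $\bar g^r-g=g^{r,n}-g^n+(g^n-g)$ decomposed into (i) the truncation tail $\sum_{l>r}$, (ii) the time-regularization of $\sigma$ (replacing $\sigma(s)$ by $\sigma(t_{j-1})$), and (iii) the piecewise-constant-in-time interpolation error of $g$ itself; these three contributions are exactly $\eta_{\text{NOISE},1}^n$. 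A Gronwall argument in $t$ then closes the estimate and furnishes the $\sup_t$ and the factor $T$.

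The main obstacle I anticipate is handling the stochastic forcing term rigorously without a direct Itô formula on $e_y$: because $y_{h,\tau}$ is only piecewise linear in time while $g$ is a genuine stochastic integral, the cross term $\E{\int_0^t(\nabla[\bar g^r-g],\nabla\mDeltaN^{-1}e_y)}$ cannot simply be discarded as a mean-zero martingale increment. The resolution is to add and subtract so that $\bar g^r - g$ is written as a sum of an $\mathcal F_{t_{n-1}}$-measurable martingale increment (which, tested against the $\mathcal F_{t_{n-1}}$-measurable $y_h^{n-1}$-part of $e_y$, is mean-zero) plus a remainder of order $\tau$; this forces a careful bookkeeping of which interpolant values are adapted, and is where the structure of the scheme (using $\sigma(t_{j-1})$ and the increment $\Delta_n\WW^r$) is essential. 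The quadratic variation bookkeeping then produces the three pieces of $\eta_{\text{NOISE},1}^n$, with the $\tau_n\sum_j\tau_j$ and $\tau_n\sum_j\int$ structure reflecting the double time-integration inherent in passing from the $\Hmonecirc$ energy to a bound on the driving noise.
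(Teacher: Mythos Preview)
Your plan is essentially the paper's: subtract \eqref{eq_randPDEy} from \eqref{Eq_yhtau}, test the first equation with $\mDeltaN^{-1}e_y$ and the second with $e_y$, add, integrate in time, take expectation, and control $\mathcal{R}_y,\mathcal{S}_y$ via Remark~\ref{Rem_estRySy} with the interpolation $\|e_y\|^2\le \|e_y\|_{\Hmonecirc}\|\nabla e_y\|$ for the $\mu_0$ term. Two points where you diverge from the paper are worth flagging.

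\emph{No Gronwall is used.} The paper closes the estimate by a weighted Young inequality, not Gronwall: e.g.\ $\mu_{-1}(s)\|e_y(s)\|_{\Hmonecirc}\le 2T\mu_{-1}^2(s)+\tfrac{1}{8T}\|e_y(s)\|_{\Hmonecirc}^2$, and after integrating over $[0,t]$ the second piece is bounded by $\tfrac18\sup_{s\le t}\E{\|e_y(s)\|_{\Hmonecirc}^2}$, which is absorbed into the left-hand side. This is what produces the factor $T$ (and $\sqrt{T/\eps}$) in the statement rather than an exponential.

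\emph{The anticipated adaptedness obstacle is not real.} Your third paragraph sets up machinery (splitting into $\mathcal F_{t_{n-1}}$-measurable martingale increments, exploiting that $y_h^{n-1}$ is adapted) that the paper never needs. The paper simply writes $(\bar g^r-g,\,e_y)$, decomposes $\bar g^r-g=(\bar g^r-\bar g)+(\bar g-g)$, integrates by parts once more using that each piece is $\eps\Delta(\cdot)$ to obtain a pairing $-\eps(\nabla[\cdot],\nabla e_y)$, and then applies Cauchy--Schwarz, Young, and It\^o isometry. The crucial step you understate is this second integration by parts: it is what lets the noise term be absorbed into the dissipation $\eps\|\nabla e_y\|^2$ (not into $\|e_y\|_{\Hmonecirc}^2$), and it is why $\eta_{\text{NOISE},1}^n$ involves only $\|\nabla(\sigma e_l)\|^2$ rather than higher derivatives. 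No martingale cancellation is invoked anywhere.
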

\begin{proof}
We subtract \eqref{eq_randPDEy} and \eqref{Eq_yhtau} set $\varphi=\mDeltaN^{-1}[y_{h,\tau}(t)-y(t)]$ in the first resulting equation and $\varphi=y_{h,\tau}(t)-y(t)$ in the second 
resulting equation
and get
\begin{align*}
\frac{1}{2}\frac{\d}{\d t}\|y_{h,\tau}(t)-y(t)\|_{\Hmonecirc}^2 & +( y_{w,h,\tau}(t)-y_w(t), y_{h,\tau}(t)-y(t))
\\&
=\epsgamma(\bar{g}^r(t)-g(t),y_{h,\tau}(t)-y(t))\\
&\qquad + \dpairing{\mathcal{R}_y(t)}{\mDeltaN^{-1}[y_{h,\tau}(t)-y(t)]}{\Hk[1]},\\
-(y_{w,h,\tau}(t)-y_w,y_{h,\tau}(t)-y(t))& =-\varepsilon \|\nabla[ y_{h,\tau}(t)-y(t)]\|^2
+ \dpairing{\mathcal{S}_y(t)}{y_{h,\tau}(t)-y(t)}{\Hk[1]}.
\end{align*}
 We sum up the above equations and take expectation and integrate over $(0,t)$ (recall since $y_{h,\tau}(0)=y(0)= 0$) to obtain
\begin{align*}
&\frac{1}{2} \E{\|y_{h,\tau}(t)-y(t)\|_{\Hmonecirc}^2} +\varepsilon \int_{0}^{t}\E{\|\nabla[ y_{h,\tau}(s)-y(s)]\|^2}\d s\\
&=\epsgamma\int_{0}^{t} \E{(\bar{g}^r(s)-g(s),y_{h,\tau}(s)-y(s))}\d s\\
&\qquad + \int_{0}^{t}\E{\dpairing{\mathcal{R}_y(s)} {\mDeltaN^{-1}[y_{h,\tau}(s)-y(s)]}{\Hk[1]}} \d s\\
&\qquad+ \int_{0}^{t}\E{\dpairing{\mathcal{S}_y(s)}{y_{h,\tau}(s)-y(s)}{\Hk[1]}} \d s.
\end{align*}
Using the respective bounds for ($\mathcal{R}_y, \mathcal{S}_y$) from Remark \ref{Rem_estRySy}  we estimate
\begin{align}\label{sCH_est_y1}
&\frac{1}{2}\E{\|y_{h,\tau}(t)-y(t)\|_{\Hmonecirc}^2} +\varepsilon \int_{0}^{t}\E{\|\nabla[ y_{h,\tau}(s)-y(s)]\|^2}\d s\nonumber\\
&\le \epsgamma\int_{0}^{t} \E{(\bar{g}^r(s)-g(s),y_{h,\tau}(s)-y(s))}\d s
\\
&\qquad + \int_{0}^{t}\E{ \mu_{-1}(s)\|y_{h,\tau}(s)-y(s)\|_{\Hmone}} \d s\nonumber
\\
\nonumber
&\qquad + \int_{0}^{t}\E{\mu_0(s)\|y_{h,\tau}(s)-y(s)\| + \mu_1(s)\|\nabla [y_{h,\tau}(s)-y(s)]\|} \d s.
\\ \nonumber
&= I_1 + I_2 + I_3 + I_4\,.
\end{align}
We estimate the second and fourth term using the Young's inequality as
$$
I_2 \leq 2T\int_{0}^{t}\E{ \mu_{-1}^2(s)}\d s + \frac{1}{8}\sup_{s\in [0,t]}\E{\|y_{h,\tau}(s)-y(s)\|_{\Hmone}^2}\,,
$$
and
$$
I_4 \leq 2\eps^{-1}\int_{0}^{t}\E{ \mu_1^2(s)]}\d s + \frac{\eps}{8}\int_{0}^{t}\E{\|\nabla [y_{h,\tau}(s)-y(s)]\|^2} \d s\,.
$$
The third term can be estimated using the interpolation inequality $\|u\|_{\Ltwo}^2\leq\|u\|_{\Hmone}\|\nabla u\|_{\Ltwo}$ and Young's inquality as
$$
I_3 \leq C\sqrt{\frac{T}{\eps}}\int_{0}^{t}\E{\mu_0^2(s)}\d s  + \frac{\eps}{8} \int_{0}^{t}\E{\|\nabla[y_{h,\tau}(s)-y(s)]\|^2}\d s
 + \frac{1}{8}\sup_{s\in [0,t]}\E{\|y_{h,\tau}(s)-y(s)\|_{\Hmone}^2 }\,.
$$

Recalling the definition of $g$, $\bar{g}$ and $\bar{g}^r$ we get after integrating by parts that
\begin{align}
I_1
 \leq & \sum_{n=1}^N \int_{t_{n-1}}^{t_n} \E{|(\bar{g}^r(s)-\bar{g}(s),y_{h,\tau}(s)-y(s))|+|(\bar{g}(s)-g(s),y_{h,\tau}(s)-y(s))|}\d s\nonumber\\
= & \varepsilon \sum_{n=1}^N \int_{t_{n-1}}^{t_n}\mathbb{E}\Bigg[\Bigg|\Bigg(\sum_{j=1}^n \sum_{l=r+1}^\infty\int_{t_{j-1}}^{t_j}\nu_l \nabla[\sigmahjm e_l]\d \Wl(r),\nabla [y_{h,\tau}(s)-y(s)]\Bigg)\Bigg|\Bigg]\d s\nonumber\\
&+ \varepsilon\sum_{n=1}^N \int_{t_{n-1}}^{t_n} \mathbb{E}\Bigg[\Bigg|\Bigg(\Bigg\{\sum_{j=1}^n \sum_{l=1}^\infty\int_{t_{j-1}}^{t_j}\nu_l \nabla[\{\sigma(r)-\sigmahjm\}e_l]\d \Wl(r)\nonumber \\
&\qquad\qquad\qquad\qquad \quad  -\sum_{l=1}^\infty\int_s^{t_{n}}\nu_l\nabla[\sigma(r)e_l]\d \Wl(r)\Bigg\},\nabla[y_{h,\tau}(s)-y(s)]\Bigg)\Bigg|\Bigg]\d s.\nonumber
\end{align}
After estimating the right-hand side above using Cauchy-Schwarz and Young's inequalities and It\^{o}'s isometry we conclude that
\begin{align*}
I_1 
\le C\varepsilon\epsgamma \sum_{n=1}^N \E{\eta_{\text{NOISE},1}^n} + \frac{\varepsilon}{8}\epsgamma[-\gamma] \int_{0}^{t} \E{\|\nabla[y_{h,\tau}(s)-y(s)]\|^2}\d s. 
\end{align*}
We insert the above estimates for $I_1$, $\dots$, $I_4$ into \eqref{sCH_est_y1} and obtain after absorbing the corresponding terms into the left hand side that
\begin{align*}
  \frac{1}{4} \sup_{s\in[0,t]}& \E{\|y_{h,\tau}(s)-y(s)\|_{\Hmonecirc}^2} + \frac{\eps}{4} \int_{0}^{t}\E{\|\nabla[ y_{h,\tau}(s)-y(s)]\|^2}\d s\\
&\le C\epsgamma[2\gamma+1]\sum_{n=1}^N \E{\eta_{\text{NOISE},1}^n}           
+ C \int_{0}^{T} \E{ T\mu_{-1}^2(s) + \sqrt{\frac{T}{\eps}}\mu_0^2(s) + \varepsilon^{-1}\mu_1^2(s)}\d s\,,
\end{align*}
which concludes the proof.
\end{proof}

In addition to the $L^\infty([0,T];L^2(\Omega;\Hmonecirc))$ estimate from the Lemma \ref{Lemma_CHesty}, we also derive an estimate 
for $y_{h,\tau}-y$ in the stronger $L^2(\Omega;L^\infty([0,T];\Hmonecirc))$-norm.
\begin{corollar}\label{Cor_CHesty}
There exists a constant $C>0$ such that the following estimate holds
\begin{align*}
& \E{ \sup_{t\in[0,T]}\|y_{h,\tau}(t)-y(t)\|_{\Hmonecirc}^2} +\varepsilon \E{\int_{0}^{T}\|\nabla[ y_{h,\tau}(s)-y(s)]\|^2\d s}\\
&\le C\epsgamma[2\gamma+1]\sum_{n=1}^N \E{\eta_{\text{NOISE},1}^n} 
+ C\int_{0}^{T} \E{T\mu_{-1}^2(s) + \sqrt{\frac{T}{\eps}}\mu_0^2(s)+ \varepsilon^{-1} \mu_1^2(s)} \d s.
\end{align*}
\end{corollar}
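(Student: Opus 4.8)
The plan is to re-run the argument of Lemma~\ref{Lemma_CHesty}, but this time take the supremum over $t$ \emph{before} taking expectations, which forces us to keep the stochastic (martingale) term intact rather than killing it by the zero-mean property. Concretely, I would subtract \eqref{eq_randPDEy} from \eqref{Eq_yhtau}, test the first equation with $\varphi=\mDeltaN^{-1}[y_{h,\tau}(t)-y(t)]$ and the second with $\varphi = y_{h,\tau}(t)-y(t)$, add them, and integrate over $(0,t)$ (using $y_{h,\tau}(0)=y(0)=0$). This yields, pathwise,
\begin{align*}
\tfrac12\|y_{h,\tau}(t)-y(t)\|_{\Hmonecirc}^2 &+ \varepsilon\int_0^t\|\nabla[y_{h,\tau}(s)-y(s)]\|^2\d s
= \epsgamma\int_0^t(\bar g^r(s)-g(s),y_{h,\tau}(s)-y(s))\d s\\
&\quad + \int_0^t\dpairing{\mathcal R_y(s)}{\mDeltaN^{-1}[y_{h,\tau}(s)-y(s)]}{\Hk[1]}\d s
+ \int_0^t\dpairing{\mathcal S_y(s)}{y_{h,\tau}(s)-y(s)}{\Hk[1]}\d s,
\end{align*}
so the new ingredient compared to Lemma~\ref{Lemma_CHesty} is only the handling of the first term once the $\sup_t$ is applied inside the expectation.

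The deterministic residual terms (the analogues of $I_2,I_3,I_4$) are treated exactly as in Lemma~\ref{Lemma_CHesty}: Young's inequality, the interpolation inequality $\|v\|^2\le\|v\|_{\Hmone}\|\nabla v\|$, and absorption of the $\varepsilon$-gradient and the $\Hmonecirc$-sup terms into the left-hand side; since these bounds already hold pointwise in $t$ before integration, taking $\sup_{t\in[0,T]}$ and then $\E{\cdot}$ costs nothing extra. For the noise term $I_1$ I would split $\bar g^r-g = (\bar g^r-\bar g)+(\bar g-g)$ as in the lemma; the truncation part $\bar g^r-\bar g$ and the time-step part are bounded by Cauchy--Schwarz and Young, putting $\tfrac{\varepsilon}{8}\int_0^t\|\nabla[y_{h,\tau}-y]\|^2$ on the left and leaving $C\varepsilon\epsgamma[2\gamma+1]\sum_n\E{\eta_{\text{NOISE},1}^n}$ on the right after It\^o's isometry. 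The one genuinely different piece is the term containing $\sum_l\int_s^{t_n}\nu_l\nabla[\sigma(r)e_l]\d\Wl(r)$ paired against $\nabla[y_{h,\tau}(s)-y(s)]$: after integrating by parts in space to move it to an $\Hmonecirc$ pairing against $y_{h,\tau}(s)-y(s)$, the $\sup_s$ hits a stochastic integral, so I would invoke the Burkholder--Davis--Gundy inequality to bound $\E{\sup_{t}|\int_0^t (\ldots)\d\WW(s)|}$ by (a constant times) the expected quadratic variation, which is again controlled by $\sum_n\E{\eta_{\text{NOISE},1}^n}$ together with a small multiple of $\E{\sup_t\|y_{h,\tau}(t)-y(t)\|_{\Hmonecirc}^2}$ to be absorbed.

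The main obstacle is precisely this last step: in Lemma~\ref{Lemma_CHesty} the martingale term vanished under $\E{\cdot}$, but here it must be estimated, and a naive Cauchy--Schwarz would produce $\sup_t\|\cdot\|_{\Hmonecirc}$ \emph{outside} a square root and hence not be absorbable. The fix is to first use the a.s. energy identity to control $\sup_t\|y_{h,\tau}(t)-y(t)\|_{\Hmonecirc}^2$ by the supremum of the right-hand side, then bound the stochastic term's supremum by BDG, and finally use Young's inequality in the form $ab\le \delta a^2 + C_\delta b^2$ to split off an absorbable $\delta\,\E{\sup_t\|\cdot\|_{\Hmonecirc}^2}$; the remaining quadratic-variation factor is exactly $\sum_n\E{\eta_{\text{NOISE},1}^n}$ up to the $\varepsilon$-powers. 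Once that is in place the constants can be re-collected and the stated estimate follows; alternatively, one can observe that $\mathcal R_y,\mathcal S_y$ and the non-martingale parts of $I_1$ are already estimated pathwise, so only the martingale part of $I_1$ needs BDG, keeping the bookkeeping short.
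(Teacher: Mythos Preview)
Your proof would eventually work, but it rests on a misreading of what happens in Lemma~\ref{Lemma_CHesty} and therefore introduces machinery that is not needed. There is no martingale term in the error identity for $y_{h,\tau}-y$: the whole point of the transformation \eqref{ytrans} is that $(y,y_w)$ solves the \emph{random} PDE \eqref{eq_randPDEy}, so after subtracting \eqref{Eq_yhtau} and testing, the resulting identity contains only Lebesgue integrals $\int_0^t(\cdot)\,\d s$ on the right-hand side. In particular the term you single out,
\[
\int_0^t\Big(\sum_l\int_s^{t_{n(s)}}\nu_l\nabla[\sigma(r)e_l]\,\d\beta_l(r),\,\nabla[y_{h,\tau}(s)-y(s)]\Big)\d s,
\]
is a Riemann integral in $s$ whose integrand happens to involve a Wiener increment; it is not a stochastic integral in the outer variable, so BDG does not apply to it as written and is in any case unnecessary. (Your claim that in Lemma~\ref{Lemma_CHesty} this term ``vanished under $\E{\cdot}$'' is also inaccurate: it was bounded there by Cauchy--Schwarz, Young, and It\^o's isometry, not by a zero-mean argument; note that $y_{h,\tau}(s)$ is not $\mathcal F_s$-measurable on $(t_{n-1},t_n)$.)

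The paper's proof is correspondingly short: from the pathwise energy identity one bounds $\sup_{t\in[0,T]}\int_0^t(\cdot)\,\d s$ trivially by $\int_0^T|(\cdot)|\,\d s$, takes expectation, and then repeats the estimates of Lemma~\ref{Lemma_CHesty} verbatim (Cauchy--Schwarz and Young pathwise, It\^o isometry after expectation). The absorption of $\tfrac{\varepsilon}{8}\int_0^T\E{\|\nabla[y_{h,\tau}-y]\|^2}$ and of $\tfrac14\E{\sup_t\|y_{h,\tau}-y\|_{\Hmonecirc}^2}$ into the left-hand side works exactly as before. So the portions of your sketch that simply ``re-run the lemma'' are correct; the paragraph about the ``main obstacle'' and BDG can be dropped entirely.
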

\begin{proof}
We proceed analogically to the proof of Lemma~\ref{Lemma_CHesty}:
we subtract the equations \eqref{eq_randPDEy} and \eqref{Eq_yhtau}, integrate in time (cf.~\eqref{sCH_est_y1}), take the supremum then the expectation and arrive at
\begin{align*}
&\frac{1}{2} \E{\sup_{t\in(0,T)}\|y_{h,\tau}(t)-y(t)\|_{\Hmonecirc}^2} +\varepsilon \E{\int_{0}^{T}\|\nabla[ y_{h,\tau}(s)-y(s)]\|^2\d s}\\
&\leq\epsgamma \E{ \int_{0}^{T} \Big|(\bar{g}^r(s)-g(s),y_{h,\tau}(s)-y(s))\Big|\d s}\\
&\qquad + \E{\int_{0}^{T}\Big|\dpairing{\mathcal{R}_y(s)} {\mDeltaN^{-1}[y_{h,\tau}(s)-y(s)]}{\Hk[1]}\Big| \d s}\\
&\qquad+ \E{\int_{0}^{T}\Big|\dpairing{\mathcal{S}_y(s)}{y_{h,\tau}(s)-y(s)}{\Hk[1]}\Big| \d s}.
\end{align*}
The remainder of the proof follows exactly as in Lemma~\ref{Lemma_CHesty}.
\end{proof}

We define the following additional ''noise'' error indicators which arise due to the discrete transformation in Lemma~\ref{Lem_stochCHdiscrtrafo}
\begin{align*}
\eta_{\text{NOISE},2}^n = & \tau_n \sum_{l=r+1}^\infty\nu_l^2\left\| \sigmahnm e_l\right\|_{\Hmonecirc}^2
+ \sum_{l=1}^\infty \nu_l^2\int_{t_{n-1}}^{t_n}\left\|\{\sigma(s)-\sigmahnm\}e_l\right\|_{\Hmonecirc}^2 \d s\\
 & + \tau_n\sum_{l=1}^r\nu_l^2\left\|P_h^n (\sigmahnm e_l) - \sigmahnm e_l \right\|_{\Hmonecirc}^2
\\
& + \eps \tau_n  {\sum_{j=1}^n\tau_j\sum_{l=1}^r\nu_l^2\left\|\nabla[P_h^n (\sigmahjm e_l) - \sigmahjm e_l ]\right\|^2}\,,
\\
\eta_{\text{NOISE},3}^n = & \sum_{l=1}^\infty \nu_l^2\int_{{t_{n-1}}}^{t_{n}}\left\|\sigma(s)e_l\right\|_{\Hmonecirc}^2\d s
\,.
\end{align*}
From Lemmas \ref{Lemma_CHesty} and \ref{Lem_stochCHdiscrtrafo}, we deduce the following estimate for the error of the approximation (\ref{CHstoch_lin_discr}) of the linear SPDE (\ref{CHstoch_lin}).
\begin{lemma}\label{Thm_CH_estetilde2}
The following a posteriori estimate holds for the error $\tilde{u}_{h,\tau}-\tilde{u}$:
\begin{align*}
\sup_{t\in[0,T]} &\E{\left\|\tilde{u}_{h,\tau}(t) -\tilde{u}(t)\right\|_{\Hmonecirc}^2} + \varepsilon \int_0^T \E{\left\|\nabla\left[\tilde{u}_{h,\tau}(s) -\tilde{u}(s)\right]\right\|^2}\d s
\\
&\le C\Bigg\{ 
  \epsgamma[2\gamma+1] \sum_{n=1}^N \E{\eta_{\text{NOISE},1}^n} + \epsgamma[2\gamma]\sum_{n=1}^N \E{\eta_{\text{NOISE},2}^n} + \max_{n=1,\dots,N} \E{\eta_{\text{NOISE},3}^n}
\\ 
    &\qquad + \int_{0}^{T} \E{T \mu_{-1}^2(s) +\sqrt{\frac{T}{\varepsilon}}\mu_0^2(s)+ \varepsilon^{-1} \mu_1^2(s)} \d s\\
&\qquad + \max_{n=1,\ldots,N} \Big( \E{\|\tilde{u}_h^{n-1}-\tilde{u}_h^n\|_{\Hmonecirc}^2}+ {\E{\|y_h^{n-1}-y_h^n\|_{\Hmonecirc}^2}}\Big)\\
&\qquad +  \varepsilon \sum_{n=1}^N\tau_n \Big( \E{\|\nabla[\tilde{u}_h^{n-1}-\tilde{u}_h^n]\|^2} + {\E{\|\nabla[y_h^{n-1}-y_h^n]\|^2}}\Big)
\Bigg\}\,.
\end{align*}
\end{lemma}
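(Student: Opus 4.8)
The plan is to combine the continuous transformation \eqref{ytrans}, the discrete transformation of Lemma~\ref{Lem_stochCHdiscrtrafo}, and the a posteriori bound of Lemma~\ref{Lemma_CHesty}. Write $\tilde e:=\tilde u_{h,\tau}-\tilde u$ and, for $t\in(t_{n-1},t_n]$, set $\Sigma_n:=\sum_{j=1}^n\int_{t_{j-1}}^{t_j}\sigma(t_{j-1})\,\d\WW^r(s)$, so that $\tilde u_h^n=y_h^n+\epsgamma P_h^n\Sigma_n$ by Lemma~\ref{Lem_stochCHdiscrtrafo} and $\tilde u(t)=y(t)+\epsgamma\int_0^t\sigma(s)\,\d\WW(s)$ by \eqref{ytrans}. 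Subtracting and inserting $\tilde u_h^n$ and $y_h^n$ yields the decomposition
\[
\tilde e(t)=\big[\tilde u_{h,\tau}(t)-\tilde u_h^n\big]+\big[y_h^n-y_{h,\tau}(t)\big]+\big[y_{h,\tau}(t)-y(t)\big]+\epsgamma\, D_n(t),\qquad D_n(t):=P_h^n\Sigma_n-\int_0^t\sigma(s)\,\d\WW(s).
\]
I would then take $\Hmonecirc$-norms, square, and take expectations for the first target norm $\sup_t\E{\|\cdot\|_{\Hmonecirc}^2}$, and separately integrate the squared $\Hone$-seminorms against $\varepsilon\,\d t$ for the second target norm $\varepsilon\int_0^T\E{\|\nabla\cdot\|^2}$; by the triangle inequality it suffices to bound each of the four terms in both norms.

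The first two terms are elementary time–interpolation defects: since $\tilde u_{h,\tau}(t)-\tilde u_h^n=\tfrac{t_n-t}{\tau_n}(\tilde u_h^{n-1}-\tilde u_h^n)$, one has $\|\tilde u_{h,\tau}(t)-\tilde u_h^n\|_{\Hmonecirc}\le\|\tilde u_h^{n-1}-\tilde u_h^n\|_{\Hmonecirc}$ and $\varepsilon\int_{t_{n-1}}^{t_n}\|\nabla[\tilde u_{h,\tau}(s)-\tilde u_h^n]\|^2\,\d s\le\varepsilon\tau_n\|\nabla[\tilde u_h^{n-1}-\tilde u_h^n]\|^2$, and likewise with $\tilde u$ replaced by $y$; these produce exactly the last two lines of the claimed bound. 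The third term $y_{h,\tau}-y$ is controlled directly by Lemma~\ref{Lemma_CHesty}, which supplies the $\mu_{-1},\mu_0,\mu_1$ line and part of the $\eta_{\mathrm{NOISE},1}$ contribution.

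The heart of the argument is the stochastic defect $D_n(t)$, which I would split as
\[
D_n(t)=\underbrace{(P_h^n-I)\Sigma_n}_{\text{projection}}+\underbrace{\sum_{j=1}^n\int_{t_{j-1}}^{t_j}\!\big(\sigma(t_{j-1})-\sigma(s)\big)\d\WW^r(s)}_{\text{time freezing of }\sigma}-\underbrace{\sum_{j=1}^n\int_{t_{j-1}}^{t_j}\!\sigma(s)\,\d(\WW-\WW^r)(s)}_{\text{mode truncation}}-\underbrace{\int_t^{t_n}\!\sigma(s)\,\d\WW(s)}_{\text{last subinterval}}.
\]
Each summand is a (sum of) Itô integrals of an adapted integrand, so I would apply Itô's isometry in $\Hmonecirc$ and in $\Hone$ (using $\|v\|_{\Hmonecirc}=\|\nabla\mDeltaN^{-1}v\|$), expand in the basis $(e_l)$, and match the resulting sums with the stated indicators: the projection term gives the third term of $\eta_{\mathrm{NOISE},2}^n$ in $\Hmonecirc$ and the fourth term in the $\varepsilon$-weighted $\Hone$-seminorm; the time–freezing term gives the second term of $\eta_{\mathrm{NOISE},2}^n$, resp.\ of $\eta_{\mathrm{NOISE},1}^n$; the truncation term gives the first term of $\eta_{\mathrm{NOISE},2}^n$, resp.\ of $\eta_{\mathrm{NOISE},1}^n$, after adding and subtracting $\sigma(t_{j-1})$; and the last–subinterval term gives $\max_n\E{\eta_{\mathrm{NOISE},3}^n}$ in $\Hmonecirc$ (only a maximum, since for fixed $t$ exactly one subinterval contributes) and the third term of $\eta_{\mathrm{NOISE},1}^n$ in the $\varepsilon$-weighted $\Hone$-seminorm. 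Collecting all contributions, using $\sum_{n=1}^N\eta^n\ge\eta^{n_0}$ to pass from a single time level to the full sum, and tracking the powers of $\varepsilon$ completes the estimate.

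The main obstacle is the bookkeeping forced by the two different norms. In the $\Hmonecirc$ sup–in–time norm the projection term produces $\sum_{j=1}^n\tau_j\sum_l\nu_l^2\,\E{\|(P_h^n-I)(\sigma^{j-1}e_l)\|_{\Hmonecirc}^2}$ with $P_h^n$ (possibly $n=N$) acting on all earlier $\sigma^{j-1}e_l$, whereas $\eta_{\mathrm{NOISE},2}^n$ retains only the $j=n$ contribution; reconciling the two requires the mesh nestedness $\Vht^{n-1}\subset\Vht^n$ together with the quasi-optimality of $P_h^n$ (stable in $L^2$ and, by duality on quasi-uniform meshes, in $\Hmonecirc$) to bound $\|(P_h^n-I)w\|_{\Hmonecirc}\le C\|(P_h^j-I)w\|_{\Hmonecirc}$ for $j\le n$ and then re-index. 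By contrast, the $\varepsilon$-weighted $\Hone$-seminorm produces the double sums $\varepsilon\tau_n\sum_{j=1}^n\tau_j$ literally, which is precisely why $\eta_{\mathrm{NOISE},1}^n$ and the fourth term of $\eta_{\mathrm{NOISE},2}^n$ are stated with an inner sum over $j$. One must also verify that every increment that appears is a genuine Itô integral of an adapted integrand, so that Itô's isometry — rather than merely Cauchy–Schwarz — is available, and keep the powers of $\varepsilon$ and $\epsgamma$ aligned throughout.
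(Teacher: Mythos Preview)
Your proposal is correct and follows essentially the same route as the paper: the paper also writes $\tilde u(t)=y(t)+\epsgamma\int_0^t\sigma\,\d\WW$, uses Lemma~\ref{Lem_stochCHdiscrtrafo} for $\tilde u_h^n$, splits $\tilde e$ into the $y_{h,\tau}-y$ piece (handled by Lemma~\ref{Lemma_CHesty}), the two time–interpolation defects, and the stochastic defect $P_h^n\Sigma_n-\int_0^t\sigma\,\d\WW$, and then decomposes the latter into exactly your four pieces (projection, truncation, time freezing of $\sigma$, last subinterval) and applies It\^o's isometry. Two minor remarks: the sign of the last-subinterval term in your display of $D_n(t)$ should be $+\int_t^{t_n}\sigma\,\d\WW$, which is harmless after squaring; and the bookkeeping issue you flag for the $\Hmonecirc$ projection term (that $I_{1,3}$ produces $\sum_{j=1}^n\tau_j\|(P_h^n-I)\sigma^{j-1}e_l\|_{\Hmonecirc}^2$ while $\sum_n\eta_{\mathrm{NOISE},2}^n$ contains only the diagonal $j=n$ contributions) is real and is in fact glossed over in the paper's proof as well.
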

\begin{proof}
We estimate the error $\tilde{u}_{h,\tau} -\tilde{u}$.
By the uniqueness of the solutions we deduce from (\ref{StochCHu_tildeweak}), (\ref{rpde})  that $\tilde{u}(t)=y(t)+\epsgamma \int_0^t\sigma(s)\d \WW(s)$.
{By the triangle inequality we get 
\begin{align*}
 \left\|\tilde{u}(t)- \tilde{u}_{h,\tau}(t)\right\|_{\Hmonecirc} & = \| y(t)+\epsgamma \int_0^t\sigma(s)\d \WW(s) - y_{h,\tau}(t) +   y_{h,\tau}(t) - \tilde{u}_{h,\tau}(t)\|_{\Hmonecirc}
\\
& \leq \| y(t) - y_{h,\tau}(t)\|_{\Hmonecirc} + \| y_{h,\tau}(t) + \epsgamma \int_0^t\sigma(s)\d \WW(s) - \tilde{u}_{h,\tau}(t) \|_{\Hmonecirc}\,,
\end{align*}
and similarly we estimate $\int_0^t\left\|\nabla\left[\tilde{u}_{h,\tau}(s) -\tilde{u}(s)\right]\right\|\d s$.
}

Hence, we bound
\begin{align}\label{ProofThm_CH_estestilde}
&\sup_{t\in[0,T]}\E{\left\|\tilde{u}_{h,\tau}(t) -\tilde{u}(t)\right\|_{\Hmonecirc}^2} + \varepsilon \int_0^T \E{\left\|\nabla\left[\tilde{u}_{h,\tau}(s) -\tilde{u}(s)\right]\right\|^2}\d s\nonumber
\\
&\qquad \le 2 \sup_{t\in[0,T]}\E{\left\| y_{h,\tau}(t) + \epsgamma \int_0^t\sigma(s)\d \WW(s) - \tilde{u}_{h,\tau}(t) \right\|_{\Hmonecirc}^2}
\\
&\qquad \qquad +  2 \sup_{t\in[0,T]}\E{\left\|y_{h,\tau}(t) -y(t)\right\|_{\Hmonecirc}^2}\nonumber\\
&\qquad \qquad  + 2\varepsilon \int_0^T \E{\left\|\nabla\left[y_{h,\tau}(t) + \epsgamma \int_0^t\sigma(s)\d \WW(s) - \tilde{u}_{h,\tau}(t)\right]\right\|^2}{\d t}\nonumber
\\ \nonumber
&\qquad \qquad + 2\varepsilon \int_0^T \E{\left\|\nabla\left[y_{h,\tau}(t) -y(t)\right]\right\|^2}\d t
\\ \nonumber
& \qquad = I_1+\ldots +I_4.
\end{align}
The terms $I_2$, $I_4$ can be estimated directly by Lemma \ref{Lemma_CHesty}. 

We estimate the first stochastic term using the triangle inequality as
\begin{align*}
I_1 
&\le 14\max_{n=1,\ldots,N} \sup_{t\in[t_{n-1},t_n]}\Bigg\{\E{ \|\tilde{u}_{h,\tau}(t) -\tilde{u}_h^n\|_{\Hmonecirc}^2} + \E{\|y_{h,\tau}(t)- y_h^n\|_{\Hmonecirc}^2}\\
&\qquad\qquad+\epsgamma\E{\left\|P_h^n\sum_{j=1}^n\int_{t_{j-1}}^{t_j}\sigmahjm\d \WW^r(s) -\sum_{j=1}^n\int_{t_{j-1}}^{t_j}\sigmahjm\d \WW^r(s) \right\|_{\Hmonecirc}^2} \\
&\qquad\qquad+\epsgamma\E{\left\|\sum_{j=1}^n\int_{t_{j-1}}^{t_j}\sigmahjm\d \WW^r(s) -\sum_{j=1}^n\int_{t_{j-1}}^{t_j}\sigmahjm\d \WW(s) \right\|_{\Hmonecirc}^2} \\
&\qquad\qquad+ \epsgamma\E{\left\|\sum_{j=1}^n\int_{t_{j-1}}^{t_j}\sigmahjm\d \WW(s) -\int_0^{t_n}\sigma(s)\d \WW(s) \right\|_{\Hmonecirc}^2} \\
&\qquad\qquad+ \epsgamma\E{\Bigg\|\int_0^{t_n}\sigma(s)\d \WW(s) - \int_0^t\sigma(s)\d \WW(s)\Bigg\|_{\Hmonecirc}^2}\\
&\qquad + \E{\Big\|\tilde{u}_h^n-y_h^n-\epsgamma P_h^n\sum_{j=1}^n\int_{t_{j-1}}^{t_j}\sigmahjm\d \WW^r(s) \Big\|_{\Hmonecirc}^2}\Bigg\}\\
&= I_{1,1}+\ldots+I_{1,7},
\end{align*}
where $\corrl{I_{1,7} = 0}$ by Lemma~\ref{Lem_stochCHdiscrtrafo} and we estimate by the linearity of $\tilde{u}_{h,\tau}$, $y_{h,\tau}$
$$
I_{1,1}\leq 28\max_{n=1,\ldots,N} \E{ \left\|\tilde{u}_h^{n-1} -\tilde{u}_h^n\right\|_{\Hmonecirc}^2},
\quad I_{1,2}\leq 28 \max_{n=1,\ldots,N}\E{ \left\|y_h^{n-1}-y_h^n\right\|_{\Hmonecirc}^2}.
$$
Next, we get by It\^{o}'s isometry 
\begin{align*}
I_{1,3} & \leq 14
 \E{\sum_{j=1}^N\tau_j\sum_{l=1}^r\nu_l^2\left\|P_h^n (\sigmahjm e_l) - \sigmahjm e_l \right\|_{\Hmonecirc}^2},
\\
I_{1,4} 
&\leq 14\epsgamma[2\gamma] \sum_{j=1}^N\E{ \sum_{l=r+1}^\infty\nu_l^2 \int_{t_{j-1}}^{t_j} \left\|\sigmahjm e_l\right\|_{\Hmonecirc}^2 \d s}, \\
I_{1,5} 
&\leq 14\epsgamma[2\gamma] \sum_{j=1}^N\E{ \sum_{l=1}^\infty\nu_l^2\int_{t_{j-1}}^{t_j} \left\|\left\{\sigma(s)- \sigmahjm\right\}e_l\right\|_{\Hmonecirc}^2 \d s},\\
{\red I_{1,6}} 
&\leq14\epsgamma[2\gamma]\max_{n=1,\ldots,N} \sup_{t\in[t_{n-1},t_n]}\E{\sum_{l=1}^\infty \nu_l^2\int_{t}^{t_n}\left\|\sigma(s)e_l\right\|_{\Hmonecirc}^2\d s}\,.
\end{align*}
The term $I_3$ can be estimated analogically to $I_1$ as
$$
I_3 \leq C \eps \sum_{n=1}^{N}\Bigg(\eta_{\mathrm{NOISE},1}^n + {\red \tau_n  \E{\sum_{j=1}^n\tau_j\sum_{l=1}^r\nu_l^2\left\|\nabla[P_h^n (\sigmahjm e_l) - \sigmahjm e_l ]\right\|^2}} \Bigg)\,.
$$
The statement then follows from Lemma~\ref{Lemma_CHesty}.
\end{proof}

In the next lemma we derive an a posteriori estimate for the numerical approximation \eqref{CHstoch_lin_discr} of the linear SPDE 
in the stronger $L^2(\Omega;L^\infty([0,T];\Hmonecirc))$-norm which is required for the control of the approximation error pathwise on the probability subset (\ref{def_Omega_tildeeps}), below.
\corrl{The estimate includes a (global) error term that reflect the error of linear
interpolation of the numerical solution; the term is of order $\tau^{2\lambda}$, $2\lambda < 1$ where $\tau:=\displaystyle \max_{n=1,\dots,N}\tau_n$.}
\begin{lemma}\label{Lemma_CHest_etilde}
The following error bound holds:
\begin{align*}
&\E{\sup_{t\in[0,T]}\left\|\tilde{u}_{h,\tau}(t) -\tilde{u}(t)\right\|_{\Hmonecirc}^2} + \varepsilon \int_0^T \E{\left\|\nabla\left[\tilde{u}_{h,\tau}(t) -\tilde{u}(t)\right]\right\|^2}\d t
\\
&\le C\Bigg\{ 
  \epsgamma[2\gamma+1] \sum_{n=1}^N \E{\eta_{\text{NOISE},1}^n} + \epsgamma[2\gamma]\sum_{n=1}^N \E{\eta_{\text{NOISE},2}^n} + \max_{n=1,\dots,N} \E{\eta_{\text{NOISE},3}^n}
\\ 
    &\qquad + \int_{0}^{T} \E{T \mu_{-1}^2(s) +\sqrt{\frac{T}{\varepsilon}}\mu_0^2(s)+ \varepsilon^{-1} \mu_1^2(s)} \d s\\
&\qquad +  \E{\max_{n=1,\ldots,N}\|\tilde{u}_h^{n-1}-\tilde{u}_h^n\|_{\Hmonecirc}^2}+ {\E{\max_{n=1,\ldots,N}\|y_h^{n-1}-y_h^n\|_{\Hmonecirc}^2}}\\
&\qquad +  \varepsilon \sum_{n=1}^N\tau_n \Big( \E{\|\nabla[\tilde{u}_h^{n-1}-\tilde{u}_h^n]\|^2} + {\E{\|\nabla[y_h^{n-1}-y_h^n]\|^2}}\Big)
\\
&\qquad \corrl{+ C_p \tau^{2\lambda}\E{\sum_{l=1}^\infty \nu_l^2\int_0^T\|\sigma(s)e_l\|_{\Hmone}^a\,\d s}^\frac{2}{a}}
\Bigg\}\,,
\end{align*}
\corrl{ for any $\lambda=q-\frac{1}{p}$, $a,\, p\in(2,\infty)$, {\red $a\geq p$}, $q>\frac{1}{p}$ which satisfy {\red $\frac{1}{p}+q<\frac{1}{2}-\frac{1}{a}$}}.
\end{lemma}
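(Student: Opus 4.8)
The plan is to follow the proof of Lemma~\ref{Thm_CH_estetilde2} line by line, replacing each ingredient by its counterpart in the stronger $L^2(\Omega;L^\infty([0,T];\Hmonecirc))$-norm, and to treat one stochastic term --- for which the $L^\infty$-in-time control is genuinely new --- by a time-regularity argument that produces the extra factor $\tau^{2\lambda}$. By uniqueness of solutions of \eqref{StochCHu_tildeweak} and \eqref{rpde} we have $\tilde u(t)=y(t)+\int_0^t\sigma(s)\,\d\WW(s)$, so by the triangle inequality
\begin{align*}
\big\|\tilde u_{h,\tau}(t)-\tilde u(t)\big\|_{\Hmonecirc}
\le \big\|y_{h,\tau}(t)-y(t)\big\|_{\Hmonecirc}
 + \Big\|y_{h,\tau}(t)+\int_0^t\sigma\,\d\WW-\tilde u_{h,\tau}(t)\Big\|_{\Hmonecirc},
\end{align*}
and similarly for $\nabla[\tilde u_{h,\tau}(t)-\tilde u(t)]$. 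Squaring and taking $\sup_{t}$ then $\E{\cdot}$, the $y_{h,\tau}-y$ contribution (both its $L^\infty_t\Hmonecirc$ and its $\varepsilon L^2_t H^1$ part) is bounded directly by Corollary~\ref{Cor_CHesty}, which supplies the $\eta_{\text{NOISE},1}^n$ and $\mu_j$ terms; the gradient term $\varepsilon\int_0^T\E{\|\nabla[\tilde u_{h,\tau}-\tilde u]\|^2}$ carries no time-supremum and is estimated verbatim as in the proof of Lemma~\ref{Thm_CH_estetilde2} (the terms $I_3$, $I_4$ there), producing the $\varepsilon\sum_n\tau_n\E{\|\nabla[\cdot]\|^2}$ contributions and the gradient parts of $\eta_{\text{NOISE},1}^n$, $\eta_{\text{NOISE},2}^n$. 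Hence the only new work is to bound $\E{\sup_{t\in[0,T]}\|y_{h,\tau}(t)+\int_0^t\sigma\,\d\WW-\tilde u_{h,\tau}(t)\|_{\Hmonecirc}^2}$.

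For this term I would decompose exactly as the term $I_1$ in the proof of Lemma~\ref{Thm_CH_estetilde2}, inserting $\tilde u_h^n$, $y_h^n$, the truncated projected stochastic sum $P_h^n\sum_{j\le n}\int_{t_{j-1}}^{t_j}\sigmahjm\,\d\WW^r$, its unprojected and untruncated versions, and $\int_0^{t_n}\sigma\,\d\WW$ --- now with $\sup_t$ and $\max_n$ kept inside $\E{\cdot}$ --- the piece built on $\tilde u_h^n-y_h^n-P_h^n\sum_{j\le n}\int_{t_{j-1}}^{t_j}\sigmahjm\,\d\WW^r$ vanishing by Lemma~\ref{Lem_stochCHdiscrtrafo}. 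The surviving pieces are handled as follows. (i) Since $y_{h,\tau}$ and $\tilde u_{h,\tau}$ are piecewise linear in time, $\sup_{t\in[t_{n-1},t_n]}\|y_{h,\tau}(t)-y_h^n\|=\|y_h^{n-1}-y_h^n\|$ and likewise for $\tilde u$, so by \eqref{interpol} taking $\max_n$ inside $\E{\cdot}$ produces the terms $\E{\max_n\|\tilde u_h^{n-1}-\tilde u_h^n\|_{\Hmonecirc}^2}$ and $\E{\max_n\|y_h^{n-1}-y_h^n\|_{\Hmonecirc}^2}$. (ii) The noise-truncation and noise-time-averaging pieces $n\mapsto\sum_{j\le n}\int_{t_{j-1}}^{t_j}\sigmahjm\,\d(\WW-\WW^r)$ and $n\mapsto\sum_{j\le n}\int_{t_{j-1}}^{t_j}\{\sigmahjm-\sigma(s)\}\,\d\WW$ are $\Hmonecirc$-valued discrete martingales with independent increments, so Doob's maximal inequality followed by It\^o's isometry bounds $\E{\max_n\|\cdot\|_{\Hmonecirc}^2}$ by the relevant parts of $\eta_{\text{NOISE},1}^n$ and $\eta_{\text{NOISE},2}^n$. (iii) The spatial-projection piece is estimated crudely via $\E{\max_n\|\sum_{j\le n}\int_{t_{j-1}}^{t_j}(P_h^n-I)\sigmahjm\,\d\WW^r\|_{\Hmonecirc}^2}\le\sum_n\E{\|\cdot\|_{\Hmonecirc}^2}$ and It\^o's isometry, using the monotonicity of the $\mathbb{L}^2$-projection error under the mesh refinement $\Vht^{n-1}\subset\Vht^n$, and contributes the remaining projection-related part of $\eta_{\text{NOISE},2}^n$.

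The only piece not covered above is the noise-increment term
\begin{align*}
\E{\max_{n=1,\dots,N}\ \sup_{t\in[t_{n-1},t_n]}\Big\|\int_t^{t_n}\sigma(s)\,\d\WW(s)\Big\|_{\Hmonecirc}^2}.
\end{align*}
In Lemma~\ref{Thm_CH_estetilde2} this is the indicator $\eta_{\text{NOISE},3}^n$, but there the time-supremum is \emph{outside} $\E{\cdot}$; here both $\max_n$ and $\sup_t$ sit inside, and a crude Doob/It\^o estimate would only give an uncontrolled $O(1)$ bound, useless for an a posteriori quantity. Instead I would use the H\"older-in-time regularity of $Z(t):=\int_0^t\sigma(s)\,\d\WW(s)$: since $|t_n-t|\le\tau$ for $t\in[t_{n-1},t_n]$,
\begin{align*}
\max_n\sup_{t\in[t_{n-1},t_n]}\Big\|\int_t^{t_n}\sigma\,\d\WW\Big\|_{\Hmonecirc}^2\le\tau^{2\lambda}\,[Z]_{C^{0,\lambda}([0,T];\Hmonecirc)}^2,
\end{align*}
and then bound the H\"older seminorm via the Sobolev--Slobodeckij embedding $W^{q,p}(0,T)\hookrightarrow C^{0,\lambda}([0,T])$, $\lambda=q-\tfrac1p>0$ (this is where $q>\tfrac1p$ enters), giving
\begin{align*}
\E{[Z]_{C^{0,\lambda}}^2}
&\le C_p\,\E{\int_0^T\!\!\int_0^T\frac{\|Z(t)-Z(s)\|_{\Hmonecirc}^p}{|t-s|^{1+qp}}\,\d t\,\d s}^{2/p}\\
&= C_p\Big(\int_0^T\!\!\int_0^T\frac{\E{\|Z(t)-Z(s)\|_{\Hmonecirc}^p}}{|t-s|^{1+qp}}\,\d t\,\d s\Big)^{2/p},
\end{align*}
where the interchange uses $p\ge2$ (Jensen) and Fubini. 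Applying the Burkholder--Davis--Gundy inequality to the increments $Z(t)-Z(s)=\int_s^t\sigma\,\d\WW$, together with H\"older's inequality in time with exponent $\tfrac a2$ and in the noise index $l$, and Jensen's inequality (using $a\ge p\ge2$), the power of $|t-s|$ remaining in the double integral is $>-1$, hence integrable over $[0,T]^2$, precisely by the hypothesis $\tfrac1p+q<\tfrac12-\tfrac1a$, and one is left with the bound $C_p\tau^{2\lambda}\E{\sum_{l=1}^\infty\nu_l^2\int_0^T\|\sigma(s)e_l\|_{\Hmone}^a\,\d s}^{2/a}$. Collecting the pieces from all three steps and adding the estimate from Corollary~\ref{Cor_CHesty} yields the claim.

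The main obstacle is Step~3: one must extract the small factor $\tau^{2\lambda}$ --- rather than an uncontrolled $O(1)$ --- from the maximal increment of the stochastic integral, which forces the passage through fractional Sobolev regularity together with the Burkholder--Davis--Gundy inequality, and is the sole reason for the technical coupling between $a$, $p$, $q$ and $\lambda$; in practice one fixes any admissible quadruple (for instance $a=p$ large, so that $\lambda$ is as close to $\tfrac12$ as one wishes), at the cost of a larger constant $C_p$.
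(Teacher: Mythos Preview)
Your proposal is correct and follows essentially the same route as the paper's proof: the same decomposition into the $y_{h,\tau}-y$ part (handled by Corollary~\ref{Cor_CHesty}), the gradient part $I_3$ (handled as in Lemma~\ref{Thm_CH_estetilde2}), and the seven-piece splitting of $I_1$ via Lemma~\ref{Lem_stochCHdiscrtrafo}, with the crucial increment term $I_{1,6}$ treated through the H\"older regularity of the stochastic convolution $t\mapsto\int_0^t\sigma\,\d\WW$, the Sobolev--Slobodeckij embedding (the paper cites \cite[Corollary~26]{simon90}), Burkholder's inequality, and H\"older in time. If anything, your write-up is slightly more explicit than the paper's in invoking Doob's maximal inequality for the discrete-martingale pieces $I_{1,4}$, $I_{1,5}$, where the paper simply writes ``by It\^o's isometry''.
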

\begin{proof}
Analogically to \eqref{ProofThm_CH_estestilde}, by first taking the supremum in time and then then expectation
we obtain that
{
\begin{align*}
&\E{\sup_{t\in[0,T]}\left\|\tilde{u}_{h,\tau}(t) -\tilde{u}(t)\right\|_{\Hmonecirc}^2} + \varepsilon \int_0^T \E{\left\|\nabla\left[\tilde{u}_{h,\tau}(s) -\tilde{u}(s)\right]\right\|^2}\d s\\
&\le 2 \E{\sup_{t\in[0,T]}\left\|\tilde{u}_{h,\tau}(t) -y_{h,\tau}(t)-\epsgamma \int_0^t\sigma(s)\d \WW(s)\right\|_{\Hmonecirc}^2}\\
&\qquad +  2 \E{\sup_{t\in[0,T]}\left\|y_{h,\tau}(t) -y(t)\right\|_{\Hmonecirc}^2}\\
&\qquad  + 2\varepsilon\int_0^T \E{\left\|\nabla\left[\tilde{u}_{h,\tau}(t) -y_{h,\tau}(t)-\epsgamma \int_0^t\sigma(s)\d \WW(s)\right]\right\|^2}\d t\\
&\qquad + 2\varepsilon \int_0^T \E{\left\|\nabla\left[y_{h,\tau}(t) -y(t)\right]\right\|^2}\d t
\\
 & = I_1+\ldots +I_4.
\end{align*}
}
The terms $I_2$, $I_4$ can be directly estimated by Corollary~\ref{Cor_CHesty}, the term $I_3$ is estimated as in Lemma~\ref{Thm_CH_estetilde2}.

We estimate the first stochastic term using the triangle inequality as
\begin{align*}
I_1 & = 
2 \E{{\red \max_{n=1,\ldots,N} }\sup_{t\in[t_{n-1},t_n]}\left\|\tilde{u}_{h,\tau}(t) -y_{h,\tau}(t)-\epsgamma \int_0^t\sigma(s)\d \WW(s)\right\|_{\Hmonecirc}^2}\\
&\leq 14\mathbb{E}\Bigg[ \max_{n=1,\ldots,N} \sup_{t\in[t_{n-1},t_n]}\Big\|\tilde{u}_{h,\tau}(t) -\tilde{u}_h^n\Big\|_{\Hmonecirc}^2\Bigg] 
    + \mathbb{E}\Bigg[\max_{n=1,\ldots,N}\sup_{t\in[t_{n-1},t_n]}\Big\|y_h^n-y_{h,\tau}(t)\Big\|_{\Hmonecirc}^2\Bigg]\\
&\qquad\qquad+\epsgamma\mathbb{E}\left[\max_{n=1,\ldots,N}\Bigg\|P_h^n\sum_{j=1}^n\int_{t_{j-1}}^{t_j}\sigmahjm\d \WW^r(s) -
\sum_{j=1}^n\int_{t_{j-1}}^{t_j}\sigmahjm\d \WW^r(s) \Bigg\|_{\Hmonecirc}^2 \right] \\
&\qquad\qquad+\epsgamma\mathbb{E}\left[\max_{n=1,\ldots,N}\Bigg\|\sum_{j=1}^n\int_{t_{j-1}}^{t_j}\sigmahjm\d \WW^r(s) -\sum_{j=1}^n\int_{t_{j-1}}^{t_j}\sigmahjm\d \WW(s)\Bigg\|_{\Hmonecirc}^2 \right] \\
&\qquad\qquad+ \epsgamma\mathbb{E}\left[\max_{n=1,\ldots,N}\Bigg\|\sum_{j=1}^n\int_{t_{j-1}}^{t_j}\sigmahjm\d \WW(s) - \int_{0}^{t_n}\sigma(s)\d \WW(s) \Bigg\|_{\Hmonecirc}^2\right] \\
&\qquad\qquad+ \epsgamma\mathbb{E}\left[\max_{n=1,\ldots,N} \sup_{t\in[t_{n-1},t_n]} \Bigg\|\int_{0}^{t_n}\sigma(s)\d \WW(s) - \int_0^t\sigma(s)\d \WW(s) \Bigg\|_{\Hmonecirc}^2\right]\\
&\qquad\qquad +  \E{\max_{n=1,\ldots,N}\Big\|\tilde{u}_h^n-y_h^n-\epsgamma P_h^n\sum_{j=1}^n\int_{t_{j-1}}^{t_j}\sigmahjm\d \WW^r(s) \Big\|_{\Hmonecirc}^2}\\
&= I_{1,1}+I_{1,2}+\ldots+I_{1,7},
\end{align*}
where $\corrl{I_{1,7} = 0}$ by Lemma~\ref{Lem_stochCHdiscrtrafo} and we estimate
$$
I_{1,1}\leq 14 \E{ \max_{n=1,\ldots,N}\left\|\tilde{u}_h^{n-1} -\tilde{u}_h^n\right\|_{\Hmonecirc}^2},\quad I_{1,2}\leq14\E{\max_{n=1,\ldots,N} \left\|y_h^{n-1}-y_h^n\right\|_{\Hmonecirc}^2}.
$$
Next, we get by It\^{o}'s isometry
\begin{align*}
I_{1,3} 
&\leq {14 \epsgamma[2\gamma] \max_{n=1,\dots,N}\sum_{j=1}^n\tau_j\E{ \sum_{l=1}^r\nu_l^2  \left\|\sigmahjm e_l - P_h^n\sigmahjm e_l\right\|_{\Hmonecirc}^2 \d s}} ,\\
{I_{1,4}} 
&\leq 14\epsgamma[2\gamma] \sum_{j=1}^N\tau_j\E{ \sum_{l=r+1}^\infty\nu_l^2  \left\|\sigmahjm e_l\right\|_{\Hmonecirc}^2 \d s}, \\
I_{1,5} 
&\leq 14\epsgamma[2\gamma] \sum_{j=1}^N\E{ \sum_{l=1}^\infty\nu_l^2\int_{t_{j-1}}^{t_j} \left\|\left\{\sigma(s)- \sigmahjm\right\}e_l\right\|_{\Hmonecirc}^2 \d s}\,.
\end{align*}

\corrl{To estimate the remaining term $I_{1,6}$ we denote
$$
J(t):=\int_0^t\sigma(s)\d \WW(s)\, .
$$
Then we may write
$$
I_{1,6}=14\, \E{\max_{n=1,\dots,N}\sup_{t\in[t_{n-1},t_n]}\|J(t)-J(t_n)\|^2_{\Hmone}}.
$$
Furthermore, using the notation
$ \displaystyle \|h\|_{C^{0,\lambda}([0,T])}=\sup_{s,t\in (0,T)}\frac{\|h(s)-h(t)\|_{\Hmone}}{|s-t|^\lambda}$
we deduce that
$$
I_{1,6}\le C\tau^{2\lambda} \E{\|J\|^2_{C^{0,\lambda}([0,T])}}\,,
$$
where $\tau:=\displaystyle \max_{n=1,\dots,N}\tau_n$.

Finally, for $a,p\in(2,\infty)$, {\red $a\geq p$}, $q>\frac{1}{p}$ which satisfy {\red $\frac{1}{p}+q<\frac{1}{2}-\frac{1}{a}$} and $\lambda=q-\frac{1}{p}$ 
and fixed constants $\kappa_1, \kappa_2, \kappa_3$ 
we estimate
\begin{align*}
I^{p/2}_{1,6}&\le C\tau^{p\lambda}\E{\|J\|^p_{C^{0,\lambda}([0,T])}}
\le C \kappa_1^p\tau^{p\lambda} \E{\int_0^T\int_0^T\frac{\|J(t)-J(s)\|_{\Hmone}^p}{|t-s|^{1+qp}}\,\d s\,\d t}
\\
&=2C \kappa_1^p\tau^{p\lambda}\E{\int_0^T\int_0^t\frac{\|J(t)-J(s)\|_{\Hmone}^p}{|t-s|^{1+qp}}\,\d s\, \d t}
 =2C\kappa_1^p\tau^{p\lambda}\int_0^T\int_0^t\frac{\E{\|J(t)-J(s)\|_{\Hmone}^p}}{|t-s|^{1+qp}}\,\d s\, \d t
\\
&\le C\kappa_2^p\tau^{p\lambda}\E{\int_0^T\int_0^t\frac{1}{(t-s)^{1+qp}}\left(\int_s^t\sum_{l=1}^\infty \nu_l^2\left\|\sigma(r)e_l\right\|_{\Hmonecirc}^2   \d r\right)^\frac{p}{2}\,\d s\, \d t}
\\
&\le C\kappa_2^p\tau^{p\lambda}\E{\int_0^T\int_0^t\frac{1}{(t-s)^{1+qp}}\left((t-s)^{\frac{a-2}{a}} \int_s^t\sum_{l=1}^\infty \nu_l^2\left\|\sigma(r)e_l\right\|_{\Hmonecirc}^a   \d r\right)^\frac{p}{2}\,\d s\, \d t}
\\
&\le C\kappa_3^p\tau^{p\lambda}\E{\left(\sum_{l=1}^\infty \nu_l^2\int_0^T\|\sigma(r)e_l\|_{\Hmone}^a\,\d r\right)^\frac{p}{a}}
\leq {\red C\kappa_3^p\tau^{p\lambda}\E{\left(\sum_{l=1}^\infty \nu_l^2\int_0^T\|\sigma(r)e_l\|_{\Hmone}^a\,\d r\right)}^\frac{p}{a} }
\,,
\end{align*}
where we used \cite[Corollary 26]{simon90} to deduce the second inequality and the Burkholder inequality to get the third inequality,
and the fourth inequality follows after an application of H\"olders inequality. 
}


The statement of the lemma then follows after collecting the estimates for $I_1,\ldots,I_4$.
\end{proof}

\section{Estimates for the nonlinear random PDE}\label{Subsec_sCH_EstNonLin}
In this section we derive a posteriori estimates for the numerical approximation \eqref{CHstoch_nonlin_discr} of the RPDE \eqref{CHstoch_nonlin}.

Analogically to Section~\ref{Subsec_sCH_EstLin} using the linear time-interpolants $\hat{u}_{h,\tau}$, $\hat{w}_{h,\tau}$ we define the following residuals 
\begin{subequations}\label{StochCH_Def_RcheckScheck}
\begin{align}\nonumber
\dpairing{\hat{\mathcal{R}}(t)}{\varphi}{\Hk[1]}&= \dpairing{\partial_t \hat{u}_{h,\tau}(t)}{\varphi}{\Hk[1]}+(\nabla \hat{w}_{h,\tau}(t),\nabla \varphi ), \\
\nonumber\dpairing{\hat{\mathcal{S}}(t)}{\varphi}{\Hk[1]}&=-(\hat{w}_{h,\tau}(t),\varphi) + \varepsilon (\nabla \hat{u}_{h,\tau}(t), \nabla \varphi) + \varepsilon^{-1}(f(u_{h,\tau}(t)),\varphi).
\end{align}
\end{subequations}
Analogically to Remark~\ref{Rem_estRySy} one can estimate the residuals as follows:
\begin{subequations}\label{est_RcheckScheck}
\begin{align}
\dpairing{\hat{\mathcal{R}}(t)}{\varphi}{\Hk[1]}&\le \hat{\mu}_{-1}(t) \|\nabla \varphi\|,\\
\dpairing{\hat{\mathcal{S}}(t)}{\varphi}{\Hk[1]}&\le\hat{\mu}_{0}(t) \|\varphi\|+\hat{\mu}_{1}(t) \|\nabla \varphi\|\,.  
\end{align}
\end{subequations}
\corrl{The indicators $\hat{\mu}_{i}$, $i=-1,0,1$ are defined as their counterparts $\mu_{i}$ in Remark~\ref{Rem_estRySy}
with $y_h^n$, $y_h^{n-1}$ replaced by $\hat{u}_h^n$, $\hat{u}_h^{n-1}$, respectively,
with the exception that the noise term $g^{r,n}$ is omitted in the space residual $\eta_{\mathrm{SPACE},1}$
and the second time indicator also includes the contribution from the nonlinear term, i.e.,
$$
\eta_{\mathrm{TIME},2}^n = \|y_{w,h}^{n-1}-y_{w,h}^n\| + \varepsilon^{-1}\|f(u_{h}^n) - f(u_{h}^{n-1})\|\,.
$$
}

For an arbitrary $\tilde{\varepsilon}>0$ we define the following set
\begin{equation}\label{def_Omega_tildeeps}
\Omega_{\tilde{\varepsilon}} =\left\{\omega\in\Omega: \sup_{t\in[0,T]}\|\tilde{e}(t)\|_{\Hmonecirc}^2+\varepsilon\int_0^T\|\nabla \tilde{e}(s)\|^2\d s\le \tilde{\varepsilon}\right\}.
\end{equation}
For a fixed $\tilde{\varepsilon}$ the size of the subset $\Omega _{\tilde{\varepsilon}} \subset\Omega$, 
can be controlled by the accuracy of the numerical approximation of the linear SPDE.
In particular the Markov inequality implies that $\mathbb{P}[\Omega_{\tilde{\varepsilon}}]>0$ for any $\tilde{\varepsilon}>0$ 
for sufficiently small $\tau\equiv\tau(\tilde{\eps})$, $h\equiv h(\tilde{\eps})$ 
(and $\mathbb{P}[\Omega_{\tilde{\varepsilon}}]\rightarrow 1$ for $\tau,\ h\rightarrow 0$).
{We note that the condition (\ref{ass_hat}) below requires that $\tilde{\eps}$ is sufficiently small.}
The estimate (\ref{est_uhat}) suggest to choose $\tilde{\eps} = C(\tau^\kappa + h^\gamma)$
for sufficiently small exponents $0 <\kappa, \gamma < 1$, see also Remark~\ref{rem_teps} below.

In addition, for $0 < \delta< 1/2$ we consider the set
\begin{align*} 
\Omega_{\delta}=\Big\{\omega\in\Omega:\, \sup_{t\in(0,T)}\varepsilon^3\big(\|u(t)\|_{\Hone}^6+\|u(t)\|^2\big)\le \tilde{\varepsilon}^{-\delta} \Big\}.
\end{align*}
{Due to the higher-moment energy estimate \cite[Lemma 2.1 ii)]{Banas19} 
the Markov inequality implies that $\Omega_{\delta}\rightarrow \Omega$ for $\tilde{\eps}\rightarrow 0$.}

Finally, the analysis in spatial dimension $d=3$ (cf. Lemma~\ref{Lemma_L3_est}) requires to consider the set $\Omega_\infty \subset \Omega$, s.t.,
$$
\Omega_\infty=\Big\{\omega\in\Omega:\,\,  \sup_{t\in(0,T)}\|\tilde{e}(t)\|_{\Lp[\infty]}+ \sup_{t\in(0,T)} \|\hat{e}(t)\|_{\Lp[\infty]} \le C_\infty\Big\}\,,
$$
for a fixed $C_{\infty}>0$; for $d=2$ we set $\Omega_\infty\equiv \Omega$.
We note that the solution of the deterministic Cahn-Hilliard equation is bounded in the $\Lp[\infty]$-norm, see e.g.~\cite[proof of Theorem 2.3]{abc94}. 

\begin{rem}\label{rem_linfty}
So far the validity of the $\Lp[\infty]$-bound for the stochastic equation 
has not been rigorously verified in general setting. The only available result in this direction is the work \cite{Banas19}
where \cite[inequality (5.8)]{Banas19} indicates that $\mathbb{P}[\Omega_\infty]$ is close to $1$, for sufficiently small $\varepsilon$ and sufficiently small
noise intensity. 

In addition, we assume in the proof of the error estimate in Lemma~\ref{Lemma_CHest_echeck} below
the boundedness of the numerical approximation $\displaystyle \sup_{t\in(0,T)}\|u_{h,\tau}(t)\|_{\Lp[\infty]}\le C_{h,\infty}$.
Even though we can not verify this assumption rigorously, cf. \cite[Lemma~5.1]{Banas19},
the assumption is not particularly restrictive since:
a)  the bound can be verified a posteriori and is in fact always satisfied in numerical simulations
with ''reasonable'' noise, cf., numerical simulations in \cite{Banas19} and Section~\ref{Sec_sCH_NumResults};
b) (potentially) unbounded solutions are not practically computable therefore only the error estimate for
bounded solutions is of practical interest.
\end{rem}

We define the \textit{stochastic} principal eigenvalue\index{Principal eigenvalue} (cf.~\cite{abc94}, \cite{FengProhl}, \cite{BartelsMueller2011}) as
\begin{equation}\label{Eigenvalue_sCH}
-\Lambda_{CH}(t)=\inf_{\substack{ \eta \in \Hone\setminus \{ 0\},\\ \int_\D \eta \d x=0}}\frac{\varepsilon\|\nabla \eta \|^2+\varepsilon^{-1}(f'(u_{h,\tau}(t))\eta,\eta)}{\|\nabla\mDeltaN^{-1}\eta\|^2}.
\end{equation}
We point out that the above definition involves a linearization
about the numerical solution $u_{h,\tau}$ of the stochastic Cahn-Hilliard equation \eqref{StochCH_discr} and is therefore computable.

We recall the following interpolation estimate, cf. \cite{BartelsMueller2011}.
\begin{lemma}\label{Lemma_L3_est}
There exists a constant $C_I>0$ such that for all $v\in \Hone$ if $d=2$ and for all $v\in\Hone\cap \Lp[\infty]$ if $d=3$ we have
\begin{equation}
\|v\|_{\Lp[3]}^3\le C_I\|v\|_{\Lp[\infty]}^{1-a}\|v\|_{\mathbb{H}^{-1}}^a\|\nabla v\|^2,
\end{equation}
where $a=1$ if $d=2$ and $a=4/5$ if $d=3$.
\end{lemma}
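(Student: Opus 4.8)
The plan is to prove the interpolation inequality
$$
\|v\|_{\Lp[3]}^3\le C_I\|v\|_{\Lp[\infty]}^{1-a}\|v\|_{\mathbb{H}^{-1}}^a\|\nabla v\|^2
$$
by a case distinction on the dimension, interpolating between the weakest norm $\Hmone$ and the two stronger quantities $\Linfty$ and $\|\nabla v\|$ present on the right-hand side, using a Gagliardo--Nirenberg--type argument realized via the Fourier/Sobolev scale of fractional spaces $\Hk[s]$. First I would reduce $\|v\|_{\Lp[3]}^3$ to a bound on $\|v\|_{\Lp[3]}$ and then relate $\Lp[3]$ to an intermediate Sobolev norm.

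For $d=2$ (where $a=1$): I would use the embedding $\Hk[1/3]\hookrightarrow \Lp[3]$, which is sharp since $\frac{1}{3} = \frac{1}{2} - \frac{1/3}{2}$ in the Sobolev scale on $\R^2$. Then I would interpolate the fractional norm: $\|v\|_{\Hk[1/3]} \le C\|v\|_{\Hmone}^{\theta}\|v\|_{\Hk[1]}^{1-\theta}$ with $\theta$ chosen so that $\frac{1}{3} = -\theta + (1-\theta)$, i.e.\ $\theta = \frac{1}{3}$, giving $\|v\|_{\Lp[3]} \le C\|v\|_{\Hmone}^{1/3}\|v\|_{\Hk[1]}^{2/3}$, hence $\|v\|_{\Lp[3]}^3 \le C\|v\|_{\Hmone}\|v\|_{\Hk[1]}^2$. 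Finally, since we work with the zero-mean subspace, $\|v\|_{\Hk[1]}$ is equivalent to $\|\nabla v\|$ (Poincar\'e--Wirtinger), and $\|v\|_{\Lp[\infty]}^{1-a} = \|v\|_{\Lp[\infty]}^0 = 1$, so this is exactly the claimed inequality with $a=1$. (One technical point: the interpolation space identity and the zero-mean Poincar\'e inequality together let us replace the full $\Hmone$ and $\Hk[1]$ norms by $\|\nabla \mDeltaN^{-1}v\|$ and $\|\nabla v\|$, which are the quantities actually used in the paper.)

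For $d=3$ (where $a=4/5$) the plain interpolation between $\Hmone$ and $\Hk[1]$ is too weak to reach $\Lp[3]$, which is why the $\Lp[\infty]$ factor enters. Here I would first write $\|v\|_{\Lp[3]}^3 = \int |v|^3 \le \|v\|_{\Lp[\infty]}^{3-q}\int |v|^{q}$ for a suitable exponent $q < 3$, or more precisely use H\"older with the $\Lp[\infty]$ bound to trade a power of $v$: $\|v\|_{\Lp[3]}^3 \le \|v\|_{\Lp[\infty]}^{1/5}\|v\|_{\Lp[p]}^{p}$ where $p = 14/5$ (so that $3 = \frac{1}{5} + \frac{14}{5}$, matching exponents on $|v|$). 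Then I would apply the $d=3$ Sobolev embedding $\Hk[s}\hookrightarrow \Lp[14/5]$ with $s = \frac{3}{2} - \frac{3}{14/5} = \frac{3}{2} - \frac{15}{14} = \frac{3}{7}$, followed by the interpolation $\|v\|_{\Hk[3/7]} \le C\|v\|_{\Hmone}^{\theta}\|v\|_{\Hk[1]}^{1-\theta}$ with $\frac{3}{7} = -\theta + (1-\theta)$, i.e.\ $\theta = \frac{2}{7}$. Collecting powers: $\|v\|_{\Lp[3]}^3 \le C\|v\|_{\Lp[\infty]}^{1/5}\|v\|_{\Hmone}^{(2/7)(14/5)}\|v\|_{\Hk[1]}^{(5/7)(14/5)} = C\|v\|_{\Lp[\infty]}^{1/5}\|v\|_{\Hmone}^{4/5}\|v\|_{\Hk[1]}^{2}$, which after the Poincar\'e equivalence $\|v\|_{\Hk[1]} \simeq \|\nabla v\|$ is precisely the inequality with $a = 4/5$ and $1-a = 1/5$.

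The main obstacle I anticipate is purely bookkeeping: choosing the H\"older split exponent and the intermediate Sobolev index $s$ so that all three scaling exponents (the one on $\|v\|_{\Lp[\infty]}$, the one on $\|v\|_{\Hmone}$, and the required $\|\nabla v\|^2$) come out simultaneously correct. Dimensional analysis / scaling $v \mapsto v(\lambda \cdot)$ pins down the exponent on $\|\nabla v\|$ to be exactly $2$ for any valid choice, so the real constraint is matching the interface between $\Lp[\infty]$ and $\Hmone$ powers; the choices above (split at $q=14/5$, Sobolev index $s=3/7$, interpolation parameter $\theta = 2/7$) are forced by requiring $\theta \cdot q = a = 4/5$ together with the embedding condition, and I would present this determination cleanly rather than by trial and error. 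A secondary (minor) point is to make sure the fractional-space interpolation inequality is invoked in a form valid on the bounded domain $\D$ with Neumann-type boundary condition and restricted to zero mean; this follows from standard real/complex interpolation of Sobolev spaces together with the fact that $\mDeltaN^{-1}$ is an isometry between the relevant zero-mean scales, so no genuine difficulty arises there.
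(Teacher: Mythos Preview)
The paper does not actually give a proof of this lemma; it simply records the statement with the citation ``cf.\ \cite{BartelsMueller2011}'' and uses it as a black box. So there is no in-paper argument to compare against.

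Your proof is correct. The exponent bookkeeping checks out in both cases: for $d=2$ the chain $\Lp[3]\hookleftarrow\Hk[1/3]$ together with $[\Hmone,\Hk[1]]_{2/3}=\Hk[1/3]$ gives $\|v\|_{\Lp[3]}^3\le C\|v\|_{\Hmone}\|\nabla v\|^2$; for $d=3$ the split $\|v\|_{\Lp[3]}^3\le \|v\|_{\Lp[\infty]}^{1/5}\|v\|_{\Lp[14/5]}^{14/5}$ followed by $\Lp[14/5]\hookleftarrow\Hk[3/7]=[\Hmone,\Hk[1]]_{5/7}$ yields exactly the exponents $1/5$, $4/5$, $2$. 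The only implicit hypothesis you rely on is that $v$ has zero mean (needed both for $\|v\|_{\Hmone}=\|\nabla\mDeltaN^{-1}v\|$ to make sense and for the Poincar\'e replacement $\|v\|_{\Hk[1]}\simeq\|\nabla v\|$); this is consistent with how the lemma is applied in the paper, where it is always invoked for the error $\hat e$ or $\tilde e$, both of which have zero mean by construction. One cosmetic point: you have a stray brace in ``$\Hk[s\}$'' in the $d=3$ paragraph; otherwise the write-up is clean.
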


We will make use of the following generalization of the Gronwall Lemma, see also \cite{BartelsMueller2011}.
\begin{lemma}\label{Generalized_Gronwall}
Let $T>0$ be fixed. Suppose that the non-negative functions $y_1\in \mathcal{C}([0,T])$, $y_2,y_3\in L^1(0,T)$, $\alpha\in L^\infty(0,T)$ and the real number $A\ge 0$ satisfy
\begin{equation}
y_1(t)+\int_0^t y_2(s)\d s \le A+\int_0^t \alpha(s)y_1(s)\d s+\int_0^t y_3(s)\d s,
\end{equation}
for all $t\in[0,T]$. Assume that for $B\ge 0$, $\beta>0$ and every $t\in [0,T]$ we have 
\begin{equation}
\int_0^t y_3(s)\d s\le B \sup_{s\in [0,t]} y_1^\beta(s)\int_0^t (y_1(s)+y_2(s))\d s.
\end{equation}
Set $E=\exp\left(\int_0^t \alpha(s)\d s \right)$ and assume that $8AE\le(8B(1+T)E)^{-1/\beta}$. We then have 
\begin{equation}
\sup_{t\in [0,T]} y_1(t) + \int_0^T y_2(s)\d  s\le 8A \exp\left(\int_0^T \alpha(s)\d s \right).
\end{equation}
\end{lemma}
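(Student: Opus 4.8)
The plan is a continuation/bootstrap argument in the spirit of the corresponding lemma in \cite{BartelsMueller2011}. First I would introduce the running quantity
\begin{equation*}
\Phi(t):=\sup_{s\in[0,t]}y_1(s)+\int_0^t y_2(s)\,\d s,
\end{equation*}
which is continuous (since $y_1\in\mathcal{C}([0,T])$ and $s\mapsto\int_0^s y_2$ is absolutely continuous), nondecreasing, and satisfies $\Phi(0)=y_1(0)\le A$. Writing $E_t:=\exp(\int_0^t\alpha(s)\,\d s)$ and $E:=E_T\ge 1$, the goal is to prove $\Phi(T)\le 8AE$. I would first dispose of the degenerate cases: if $B=0$ then $y_3\equiv 0$ a.e.\ and the assertion is the classical Gronwall inequality; if $A=0$ one applies the result with $A$ replaced by an arbitrary $A'>0$ (the hypotheses persist since $A\le A'$ and $8A'E\le(8B(1+T)E)^{-1/\beta}$ once $A'$ is small) and then lets $A'\downarrow 0$. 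Hence I may assume $A>0$ and $B>0$.

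Next, I would set $t^{\ast}:=\sup\{t\in[0,T]:\Phi(s)\le 8AE\ \text{for all }s\le t\}$. Continuity of $\Phi$ and $\Phi(0)\le A<8AE$ give $t^{\ast}>0$, $\Phi\le 8AE$ on $[0,t^{\ast}]$, and $\Phi(t^{\ast})\le 8AE$. The crucial first estimate is that on $[0,t^{\ast}]$ the ``remainder'' $\int_0^t y_3$ is small \emph{relative to $E$}: for $t\le t^{\ast}$ one has $\sup_{s\le t}y_1^\beta(s)\le\Phi(t)^\beta\le(8AE)^\beta$ and $\int_0^t(y_1+y_2)\,\d s\le(1+T)\Phi(t)$, so the second hypothesis together with the smallness condition, rewritten as $(8AE)^\beta\le(8B(1+T)E)^{-1}$, yields
\begin{equation*}
\int_0^t y_3(s)\,\d s\ \le\ B(8AE)^\beta(1+T)\,\Phi(t)\ \le\ \frac{\Phi(t)}{8E},\qquad t\le t^{\ast}.
\end{equation*}
The factor $E$ appearing here (i.e.\ $\Phi(t)/(8E)$ rather than $\Phi(t)/8$) is precisely what is needed so that the contribution of $y_3$ survives the forthcoming Gronwall amplification; this is why the standing hypothesis carries an $E$ inside the power.

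The second estimate feeds this back into the main inequality. Fix $t\le t^{\ast}$ and set $c:=A+\Phi(t)/(8E)$, a constant. The main inequality, the bound just obtained, and the monotonicity $\Phi(s)\le\Phi(t)$ give $y_1(s)\le c+\int_0^s\alpha(r)y_1(r)\,\d r$ for all $s\le t$; the classical Gronwall inequality then gives $y_1(s)\le cE_s$ for $s\le t$, and consequently the \emph{sharp} bound $\int_0^t\alpha(s)y_1(s)\,\d s\le c\int_0^t\alpha(s)E_s\,\d s=c(E_t-1)$ (the cruder estimate $\int_0^t\alpha y_1\le\|\alpha\|_{L^1}\sup_{[0,t]}y_1$ would be too lossy). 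Inserting this into the main inequality at time $t$ gives $\int_0^t y_2\,\d s\le A+c(E_t-1)+\Phi(t)/(8E)=cE_t$, while $\sup_{s\le t}y_1(s)\le cE_t$; hence
\begin{equation*}
\Phi(t)\ \le\ 2cE_t\ =\ 2AE_t+\frac{E_t}{4E}\,\Phi(t),
\end{equation*}
and since $E_t\le E$ the last term is absorbed, leaving $\Phi(t)\le\tfrac{8}{3}AE$ for every $t\le t^{\ast}$. Finally, if $t^{\ast}<T$, continuity together with the maximality of $t^{\ast}$ forces $\Phi(t^{\ast})=8AE$, contradicting $\Phi(t^{\ast})\le\tfrac{8}{3}AE<8AE$ (here $A>0$ is used); therefore $t^{\ast}=T$ and $\Phi(T)\le\tfrac{8}{3}AE\le 8AE=8A\exp(\int_0^T\alpha(s)\,\d s)$, which is the assertion. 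I expect the bookkeeping in the second estimate to be the main obstacle: one has to propagate the $y_3$-term through the Gronwall step, using both the $1/E$ gained in the first estimate and the exact identity $\int_0^t\alpha(s)E_s\,\d s=E_t-1$, so that the self-referential inequality for $\Phi$ closes with a constant strictly below $8$.
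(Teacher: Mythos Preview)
The paper does not give its own proof of this lemma; it simply states the result and cites \cite{BartelsMueller2011}. Your continuation/bootstrap argument is correct and is precisely the standard proof used in that reference: control $\int_0^t y_3$ by $\Phi(t)/(8E)$ on the set where $\Phi\le 8AE$, feed this through the classical Gronwall inequality via the identity $\int_0^t\alpha E_s\,\d s=E_t-1$ to obtain the strict improvement $\Phi\le\tfrac{8}{3}AE$, and close by continuity.
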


Below, we show a pathwise estimate
for the error $\hat{e}=\hat{u}_{h,\tau}-\hat{u}$ of the approximation of the non-linear RPDE
which holds the subspace $\Omega_{\tilde{\varepsilon}}\cap \Omega_{\delta}$.
\begin{lemma}\label{Lemma_CHest_echeck}
Let the assumptions of Lemma~\ref{Lemma_CHest_etilde} be satisfied and assume in addition
that {$\displaystyle \sup_{t\in(0,T)}\|u_{h,\tau}(t)\|_{\Lp[\infty]}\le C_{h,\infty}$ $\mathbb{P}$-a.s. 
on $\Omega_\infty\cap \Omega_{\tilde{\varepsilon}}\cap \Omega_{\delta}$}.
Furthermore, set
\begin{align*}
\alpha(t)&= \left(9 + 4(1-\varepsilon^3)\Lambda_{CH}(t)\right)_+,\quad B= \varepsilon^{-5}({ C_I} C_{h,\infty} C_\infty^{1-a})\,,\\
E&=\exp\left(\int_0^T \alpha(s) \d s\right)\,,
\end{align*}
{and assume that the following inequality is satisfied}
\begin{align}\label{ass_hat}
\int_0^T & \left( \hat{\mu}_{-1}^2(s)+ \varepsilon^{-2}\hat{\mu}_0^2(s)+\varepsilon^{-4}\hat{\mu}_1^2(s) \right)\d s + \|\hat{e}(0)\|_{\Hmonecirc}^2\nonumber
\\
&+\sqrt{\tilde{\varepsilon}}\biggr[4\varepsilon \sqrt{\tilde{\varepsilon}}+4(1-\varepsilon^3)\sqrt{\tilde{\varepsilon}}\int_0^T(\Lambda_{CH}(s))_+\d s +2\left((1-\varepsilon^3)+8\varepsilon^{-3}(1-\varepsilon^3)^2\right)\sqrt{\tilde{\varepsilon}}
\\
&\qquad\qquad +  C C_I C_{h,\infty} C_\infty^{1-a}\varepsilon^{-2}\tilde{\varepsilon}^{1/2+a}\biggr] + C\varepsilon^{-4}\tilde{\eps}^{1/2-\delta} \nonumber
\\ \nonumber
\le &(8E)^{-(1+2/a)}B^{-2/a}(1+T)^{-2/a},
\end{align}
where $a =1$ if $d=2$ and $a=4/5$ if $d=3$. 

Then the following a posteriori estimate holds
\begin{align}\label{est_uhat}
 \sup_{t\in[0,T]} & \|\hat{e}(t)\|_{\Hmonecirc}^2+{\varepsilon^4}\int_0^T \|\nabla \hat{e}(s)\|^2 \d s 
\leq \exp\left(\int_0^T\left(26+4(1-\varepsilon^3)\Lambda_{CH}(s)\right)_+  \d s\right)
 \nonumber
\\ \nonumber
 & \times {8} \bigg(\int_0^T \left( \hat{\mu}_{-1}^2(s)+ \varepsilon^{-2}\hat{\mu}_0^2(s)+\varepsilon^{-4}\hat{\mu}_1^2(s) \right)\d  s + \|\hat{e}(0)\|_{\Hmonecirc}^2
\\
&\qquad  +2\sqrt{\tilde{\varepsilon}}\biggr[2\varepsilon \sqrt{\tilde{\varepsilon}}+2(1-\varepsilon^3)\sqrt{\tilde{\varepsilon}}\int_0^T(\Lambda_{CH}(s))_+\d s 
+\left((1-\varepsilon^3)+8\varepsilon^{-3}(1-\varepsilon^3)^2\right) \sqrt{\tilde{\varepsilon}}
\\ \nonumber
&\qquad  + C C_I C_{h,\infty} C_\infty^{1-a}\varepsilon^{-2}\tilde{\varepsilon}^{1/2+a}\biggr] + C\varepsilon^{-4}\tilde{\eps}^{1/2-\delta} \bigg)\,,
\end{align}
$\mathbb{P}$-a.s. on $\Omega_\infty\cap \Omega_{\tilde{\varepsilon}}\cap \Omega_{\delta}$.
\end{lemma}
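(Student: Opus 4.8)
The plan is to adapt, pathwise on $\Omega_\infty\cap\Omega_{\tilde\varepsilon}\cap\Omega_\delta$, the deterministic Cahn--Hilliard a posteriori argument of \cite{BartelsMueller2011}, and to close the resulting differential inequality with the generalized Gronwall Lemma~\ref{Generalized_Gronwall}. First I would subtract the weak form of \eqref{CHstoch_nonlin} from the residual equations \eqref{StochCH_Def_RcheckScheck}, test the first with $\varphi=\mDeltaN^{-1}\hat e$ (admissible since $\hat e$ has zero spatial mean, which is preserved by both \eqref{CHstoch_nonlin_discr} and \eqref{CHstoch_nonlin}) and the second with $\varphi=\hat e$, and add. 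Using $\dpairing{\partial_t\hat e}{\mDeltaN^{-1}\hat e}{\Hk[1]}=\tfrac12\tfrac{\d}{\d t}\|\hat e\|_{\Hmonecirc}^2$ and the defining property of $\mDeltaN^{-1}$ this yields the pathwise energy identity
\[
\tfrac12\tfrac{\d}{\d t}\|\hat e(t)\|_{\Hmonecirc}^2+\varepsilon\|\nabla\hat e(t)\|^2+\varepsilon^{-1}\big(f(u_{h,\tau}(t))-f(u(t)),\hat e(t)\big)=\dpairing{\hat{\mathcal{R}}(t)}{\mDeltaN^{-1}\hat e(t)}{\Hk[1]}+\dpairing{\hat{\mathcal{S}}(t)}{\hat e(t)}{\Hk[1]}.
\]
The residual terms are controlled via \eqref{est_RcheckScheck}, the interpolation inequality $\|\hat e\|^2\le\|\hat e\|_{\Hmonecirc}\|\nabla\hat e\|$ and Young's inequality; absorbing small multiples of $\varepsilon^4\|\nabla\hat e\|^2$ into the left-hand side produces the source terms $\int_0^T(\hat\mu_{-1}^2+\varepsilon^{-2}\hat\mu_0^2+\varepsilon^{-4}\hat\mu_1^2)\,\d s$ together with $O(1)$ multiples of $\|\hat e\|_{\Hmonecirc}^2$ that will feed the Gronwall coefficient, while integrating the time derivative moves $\|\hat e(0)\|_{\Hmonecirc}^2$ to the right.

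The core of the argument is the nonlinear term. Writing $\zeta:=u_{h,\tau}-u=\tilde e+\hat e$ and Taylor-expanding $f(x)=x^3-x$ around $u_{h,\tau}$ gives the exact identity $f(u_{h,\tau})-f(u)=f'(u_{h,\tau})\zeta-3u_{h,\tau}\zeta^2+\zeta^3$. The ``diagonal'' quadratic piece $\varepsilon^{-1}(f'(u_{h,\tau})\hat e,\hat e)$ is combined with part of the gradient term: splitting $\varepsilon\|\nabla\hat e\|^2=(1-\varepsilon^3)\varepsilon\|\nabla\hat e\|^2+\varepsilon^4\|\nabla\hat e\|^2$ and invoking the definition \eqref{Eigenvalue_sCH} of the principal eigenvalue (again using $\int_\D\hat e=0$) gives $(1-\varepsilon^3)\big[\varepsilon\|\nabla\hat e\|^2+\varepsilon^{-1}(f'(u_{h,\tau})\hat e,\hat e)\big]\ge-(1-\varepsilon^3)\Lambda_{CH}(t)\|\hat e\|_{\Hmonecirc}^2$, while the leftover $\varepsilon^2(f'(u_{h,\tau})\hat e,\hat e)\ge-\varepsilon^2\|\hat e\|^2$ is absorbed by the retained $\varepsilon^4$-coercivity at the cost of another $O(1)\|\hat e\|_{\Hmonecirc}^2$; this is the origin of the $(1-\varepsilon^3)\Lambda_{CH}$ contribution to $\alpha$ and of the bare $\varepsilon^4$-coercivity in \eqref{est_uhat}. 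The genuinely cubic pure-$\hat e$ part $-3\varepsilon^{-1}(u_{h,\tau}\hat e^2,\hat e)+\varepsilon^{-1}\|\hat e\|_{\Lp[4]}^4$ keeps the (nonnegative) quartic term on the left and estimates $|3\varepsilon^{-1}(u_{h,\tau}\hat e^2,\hat e)|\le 3\varepsilon^{-1}C_{h,\infty}\|\hat e\|_{\Lp[3]}^3\le 3\varepsilon^{-1}C_{h,\infty}C_IC_\infty^{1-a}\|\hat e\|_{\Hmonecirc}^a\|\nabla\hat e\|^2$ by Lemma~\ref{Lemma_L3_est} together with the $\Lp[\infty]$-bounds on $\Omega_\infty$ and on $u_{h,\tau}$. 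Writing $\|\nabla\hat e\|^2=\varepsilon^{-4}y_2$ with $y_2:=\varepsilon^4\|\nabla\hat e\|^2$, this is exactly the term $y_3$ of Lemma~\ref{Generalized_Gronwall} with $\beta=a/2$ and $B=\varepsilon^{-5}C_IC_{h,\infty}C_\infty^{1-a}$.

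The remaining pieces of the expansion of $f(u_{h,\tau})-f(u)$ are all at least linear in $\tilde e$ --- namely $\varepsilon^{-1}(f'(u_{h,\tau})\tilde e,\hat e)$, $-6\varepsilon^{-1}(u_{h,\tau}\tilde e\hat e,\hat e)$, $-3\varepsilon^{-1}(u_{h,\tau}\tilde e^2,\hat e)$, $3\varepsilon^{-1}(\tilde e\hat e^2,\hat e)$, $3\varepsilon^{-1}(\tilde e^2\hat e,\hat e)$ and $\varepsilon^{-1}(\tilde e^3,\hat e)$ --- and are bounded, pathwise on $\Omega_\infty\cap\Omega_{\tilde\varepsilon}\cap\Omega_\delta$, by combining the $\Lp[\infty]$-bounds $\|\tilde e\|_{\Lp[\infty]},\|\hat e\|_{\Lp[\infty]}\le C_\infty$ and $\|u_{h,\tau}\|_{\Lp[\infty]}\le C_{h,\infty}$, the smallness $\sup_t\|\tilde e\|_{\Hmonecirc}^2\le\tilde\varepsilon$ and $\varepsilon\int_0^T\|\nabla\tilde e\|^2\le\tilde\varepsilon$ on $\Omega_{\tilde\varepsilon}$, the higher-moment bound $\varepsilon^3(\|u\|_{\Hk[1]}^6+\|u\|^2)\le\tilde\varepsilon^{-\delta}$ on $\Omega_\delta$, the interpolation inequalities $\|v\|^2\le\|v\|_{\Hmonecirc}\|\nabla v\|$ and Lemma~\ref{Lemma_L3_est}, and repeated applications of Young's inequality. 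Each such term splits into a small multiple of $\varepsilon^4\|\nabla\hat e\|^2$ (absorbed), an $O(1)$ multiple of $\|\hat e\|_{\Hmonecirc}^2$ (feeding $\alpha$), and a ``data'' remainder which is a power of $\tilde\varepsilon$ times a negative power of $\varepsilon$; collecting these reproduces exactly the bracketed $\sqrt{\tilde\varepsilon}[\,\cdots\,]$ and $C\varepsilon^{-4}\tilde\varepsilon^{1/2-\delta}$ contributions of \eqref{ass_hat}/\eqref{est_uhat}. Integrating in time on $(0,t)$ then gives $y_1(t)+\int_0^t y_2\le A+\int_0^t\alpha(s)y_1(s)\,\d s+\int_0^t y_3$ with $y_1=\|\hat e\|_{\Hmonecirc}^2$, $\alpha(t)=(9+4(1-\varepsilon^3)\Lambda_{CH}(t))_+$ and $A$ the collected data, whereupon the hypothesis $8AE\le(8B(1+T)E)^{-2/a}$ of Lemma~\ref{Generalized_Gronwall} is precisely assumption \eqref{ass_hat}, and the lemma yields \eqref{est_uhat}.

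I expect the main obstacle to be the bookkeeping in the two middle steps: one has to choose the gradient split (the factor $1-\varepsilon^3$) and all Young-inequality weights so that \textbf{(i)} a strictly positive multiple of $\varepsilon^4\|\nabla\hat e\|^2$ survives after every term superlinear in $\hat e$ (other than the cubic $\Lp[3]$-term) has been absorbed, \textbf{(ii)} that single exceptional term has exactly the $\sup y_1^{\beta}\int(y_1+y_2)$ structure with the stated $\beta$ and $B$, and \textbf{(iii)} the accumulated data $A$ is dominated by the right-hand side of \eqref{ass_hat} --- which is exactly where the pathwise restriction to $\Omega_{\tilde\varepsilon}\cap\Omega_\delta$ (making $\tilde e$ small and controlling the moments of $u$) is used, and is the reason the smallness condition \eqref{ass_hat} is unavoidable. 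Dimension $d=3$ is handled uniformly with $d=2$ because for $d=2$ one has $a=1$, so $C_\infty^{1-a}=1$ and the $\Omega_\infty$-restriction is only genuinely invoked when $d=3$.
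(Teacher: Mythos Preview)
Your overall architecture---pathwise energy identity, eigenvalue bound, the $\Lp[3]$ interpolation of Lemma~\ref{Lemma_L3_est}, and closure by Lemma~\ref{Generalized_Gronwall}---matches the paper. The treatment of the nonlinearity, however, differs in a way worth noting.

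The paper does \emph{not} Taylor-expand $f(u_{h,\tau})-f(u)$ and does not apply the eigenvalue to $\hat e$ directly. Instead it derives two separate upper bounds for the nonlinear term, both via the \emph{full} error $e=\hat e+\tilde e$: one writes $\hat e=e-\tilde e$ and uses the monotonicity of $u\mapsto u^3$ to obtain $-\,(f(u_{h,\tau})-f(u),\hat e)\le \|e\|^2+\|f(u_{h,\tau})-f(u)\|\,\|\tilde e\|$; the other uses the inequality $-(f(u_{h,\tau})-f(u),e)\le -(f'(u_{h,\tau})e,e)+3\|u_{h,\tau}\|_{\Lp[\infty]}\|e\|_{\Lp[3]}^3$ from \cite{fp05} and then invokes \eqref{Eigenvalue_sCH} with $\eta=e$ to get $\Lambda_{CH}\|e\|_{\Hmonecirc}^2+\varepsilon\|\nabla e\|^2$. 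The two resulting differential inequalities are then combined with the weights $\varepsilon^3$ and $1-\varepsilon^3$; the factor $1-\varepsilon^3$ that you introduce as a direct split of $\varepsilon\|\nabla\hat e\|^2$ arises in the paper from this convex combination, after expanding $\|\nabla e\|^2$ in terms of $\nabla\hat e$ and $\nabla\tilde e$.

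The practical consequence is that the paper's route generates the remainder $\varepsilon^{-1}\|f(u_{h,\tau})-f(u)\|\,\|\tilde e\|$, which is controlled by bounding $\|f(u)\|$ via the set $\Omega_\delta$ and $\|\tilde e\|$ via $\Omega_{\tilde\varepsilon}$; this is precisely the origin of the term $C\varepsilon^{-4}\tilde\varepsilon^{1/2-\delta}$ in \eqref{ass_hat} and \eqref{est_uhat}. Your Taylor-expansion around $u_{h,\tau}$ never produces an explicit $u$, so the mixed terms you list are all governed by $\|u_{h,\tau}\|_{\Lp[\infty]}$, $\|\tilde e\|_{\Lp[\infty]}$, $\|\hat e\|_{\Lp[\infty]}$ and the smallness on $\Omega_{\tilde\varepsilon}$; $\Omega_\delta$ plays no role in your argument. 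Hence your claim that the bookkeeping ``reproduces exactly'' the bracketed data terms and the $C\varepsilon^{-4}\tilde\varepsilon^{1/2-\delta}$ contribution is not accurate: you will obtain a structurally similar but different $A$ (indeed one without the $\delta$-dependent piece), which still verifies the Gronwall hypothesis and yields a bound at least as strong as \eqref{est_uhat}. In short, your route is a valid---arguably cleaner---variant, but it is not the paper's, and the precise constants differ.
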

\begin{proof}
We denote $\hat{e}_w=\hat{w}_{h,\tau}-\hat{w}$, then by subtracting \eqref{CHstoch_nonlin} and \eqref{StochCH_Def_RcheckScheck} with $\varphi= \mDeltaN^{-1}\hat{e}$ and $\varphi=\hat{e}$ respectively, we get 
\begin{align*}
\dpairing{\partial_t \hat{e}(t)}{\mDeltaN^{-1}\hat{e}(t)}{\Hk[1]}+&(\nabla \hat{e}_w(t),\nabla \mDeltaN^{-1}\hat{e}(t))\\
 & = \dpairing{\hat{\mathcal{R}}(t)}{\mDeltaN^{-1}\hat{e}(t)}{\Hk[1]},\\
-(\hat{e}_w(t),\hat{e}(t)) + \varepsilon (\nabla \hat{e}(t), \nabla \hat{e}(t)) &= - \varepsilon^{-1}(f(u_{h,\tau}(t))-f(u(t)),\hat{e}(t))\\ &\qquad +\dpairing{\hat{\mathcal{S}}(t)}{\hat{e}(t)}{\Hk[1]}.
\end{align*}
We sum up the two equations and obtain
\begin{align}\label{est_hat1}
\frac{1}{2} \frac{\mathrm{d}}{\mathrm{d}t}\|\hat{e}(t)\|_{\Hmonecirc}^2&+\varepsilon \|\nabla \hat{e}(t)\|^2 =
- \varepsilon^{-1}(f(u_{h,\tau}(t))-f(u(t)),\hat{e}(t))
\\ \nonumber
&\; + \dpairing{\hat{\mathcal{R}}(t)}{\mDeltaN^{-1}\hat{e}(t)}{\Hk[1]}
 +\dpairing{\hat{\mathcal{S}}(t)}{\hat{e}(t)}{\Hk[1]}.
\end{align}
On noting that $f=u^3-u$, we write $\hat{e}=e-\tilde{e}$ and use the monotonicity of $u^3$ to derive 
\begin{align}\label{xx1}
\nonumber - (f(u_{h,\tau}(t))-f(u(t)),\hat{e}(t)) 
&= - (u_{h,\tau}(t)-u(t),e(t)) -(u_{h,\tau}(t)^3-u(t)^3,e(t))\\
&\qquad + (f(u_{h,\tau}(t))-f(u(t)),\tilde{e}(t))\\
\nonumber &\le \|e(t)\|^2 + \|f(u_{h,\tau}(t))-f(u(t))\|\|\tilde{e}(t)\|\,.
\end{align}
We have by \cite[eq. (2.6)]{fp05} that
\begin{align*}
&-\left(f(u_{h,\tau}(t))-f(u(t)),e(t)\right)\\
&\qquad \le -\left(f'(u_{h,\tau}(t))e(t),e(t)\right)+ 3\left\|u_{h,\tau}(t)\right\|_{\Lp[\infty]}\left\|e(t)\right\|_{\Lp[3]}^3\,.
\end{align*}
Hence, from above inequality and the eigenvalue estimate \eqref{Eigenvalue_sCH} we deduce
\begin{align}\label{xx2}
\nonumber - \varepsilon^{-1}& (f(u_{h,\tau}(t))-f(u(t)),\hat{e}(t))  
\\
\nonumber &  =  -\varepsilon^{-1}(f(u_{h,\tau}(t))-f(u(t)),e(t))+ \varepsilon^{-1}(f(u_{h,\tau}(t))-f(u(t)),\tilde{e}(t)) 
\\
&\le -\varepsilon^{-1}(f'(u_{h,\tau}(t))e(t),e(t))+ 6\varepsilon^{-1}\|u_{h,\tau}(t)\|_{\Lp[\infty]}\|e(t)\|_{\Lp[3]}^3
\\ \nonumber
&\qquad + \varepsilon^{-1}\|f(u_{h,\tau}(t))-f(u(t))\|\|\tilde{e}(t)\|
\\ \nonumber
&\le \Lambda_{CH}(t)\|e(t)\|_{\Hmonecirc}^2 + \varepsilon \|\nabla e(t)\|^2+ 6\varepsilon^{-1}\|u_{h,\tau}(t)\|_{\Lp[\infty]}\|e(t)\|_{\Lp[3]}^3
\\ \nonumber
&\qquad + \varepsilon^{-1}\|f(u_{h,\tau}(t))-f(u(t))\|\|\tilde{e}(t)\|.
\end{align}
Using (\ref{xx1}) we deduce from (\ref{est_hat1}) that
\begin{align}
&\frac{1}{2} \frac{\mathrm{d}}{\mathrm{d}t}\|\hat{e}(t)\|_{\Hmonecirc}^2+\varepsilon \|\nabla \hat{e}(t)\|^2\nonumber\\
& \le \varepsilon^{-1}\|e(t)\|^2 + \varepsilon^{-1}\|f(u_{h,\tau}(t))-f(u(t))\|\|\tilde{e}(t)\|\nonumber \\
 &\qquad+ \dpairing{\hat{\mathcal{R}}(t)}{\mDeltaN^{-1}\hat{e}(t)}{\Hk[1]}
 +\dpairing{\hat{\mathcal{S}}(t)}{\hat{e}(t)}{\Hk[1]}\nonumber\\
 &\le 2\varepsilon^{-1}(\|\hat{e}(t)\|^2 + \|\tilde{e}(t)\|^2)+ \varepsilon^{-1}\|f(u_{h,\tau}(t))-f(u(t))\|\|\tilde{e}(t)\|\label{BM11_eq8_stoch} \\
  &\qquad+ \dpairing{\hat{\mathcal{R}}(t)}{\mDeltaN^{-1}\hat{e}(t)}{\Hk[1]}
 +\dpairing{\hat{\mathcal{S}}(t)}{\hat{e}(t)}{\Hk[1]}\nonumber\,.
\end{align}
On the other had, using (\ref{xx2}) in (\ref{est_hat1}) yields
\begin{align}
  &\frac{1}{2} \frac{\mathrm{d}}{\mathrm{d}t}\|\hat{e}(t)\|_{\Hmonecirc}^2+\varepsilon \|\nabla \hat{e}(t)\|^2\nonumber \\
  &\le\Lambda_{CH}(t)\|e(t)\|_{\Hone}^2  +\varepsilon \|\nabla e(t)\|^2 + 6\varepsilon^{-1}\|u_{h,\tau}(t)\|_{\Lp[\infty]}\|e(t)\|_{\Lp[3]}^3\nonumber\\
&\qquad + \varepsilon^{-1}\|f(u_{h,\tau}(t))-f(u(t))\|\|\tilde{e}(t)\|\nonumber\\
&\qquad + \dpairing{\hat{\mathcal{R}}(t)}{\mDeltaN^{-1}\hat{e}(t)}{\Hk[1]}
 +\dpairing{\hat{\mathcal{S}}(t)}{\hat{e}(t)}{\Hk[1]}\nonumber\\
&\le \;2\Lambda_{CH}(t)\|\hat{e}(t)\|_{\Hmonecirc}^2 +2\Lambda_{CH}(t)\|\tilde{e}(t)\|_{\Hmonecirc}^2 \label{BM11_eq9_stoch}\\
&\qquad +\varepsilon (\|\nabla \hat{e}(t)\|^2 + \|\nabla \tilde{e}(t)\|^2 +2 \|\nabla \hat{e}(t)\| \|\nabla \tilde{e}(t)\|)\nonumber\\
&\qquad  + 24\varepsilon^{-1}\|u_{h,\tau}(t)\|_{\Lp[\infty]}(\|\hat{e}(t)\|_{\Lp[3]}^3+ \|\tilde{e}(t)\|_{\Lp[3]}^3)\nonumber\\
&\qquad + \varepsilon^{-1}\|f(u_{h,\tau}(t))-f(u(t))\|\|\tilde{e}(t)\| \nonumber\\
&\qquad+  \dpairing{\hat{\mathcal{R}}(t)}{\mDeltaN^{-1}\hat{e}(t)}{\Hk[1]}
 +\dpairing{\hat{\mathcal{S}}(t)}{\hat{e}(t)}{\Hk[1]}.\nonumber
\end{align}
We multiply \eqref{BM11_eq8_stoch} and \eqref{BM11_eq9_stoch} by $\varepsilon^3$ and $(1-\varepsilon^3)$, respectively, and 
sum the results up
\begin{align}\label{BM11_eq9a_stoch}
\frac{1}{2} \frac{\mathrm{d}}{\mathrm{d}t}\| \hat{e}(t)\|_{\Hmonecirc}^2+\varepsilon^4 \|\nabla \hat{e}(t)\|^2
 \le & 2\varepsilon^{2}(\|\hat{e}(t)\|^2 + \|\tilde{e}(t)\|^2)+ 2(1-\varepsilon^3)\Lambda_{CH}(t)\|\hat{e}(t)\|_{\Hmonecirc}^2 \nonumber\\
&\;  +2(1-\varepsilon^3)\Lambda_{CH}(t)\|\tilde{e}(t)\|_{\Hmonecirc}^2 \nonumber\\
&\; +\varepsilon(1-\varepsilon^3) \|\nabla \tilde{e}(t)\|^2 +2\varepsilon(1-\varepsilon^3) \|\nabla \hat{e}(t)\| \|\nabla \tilde{e}(t)\|\nonumber\\
&\;  + 24\varepsilon^{-1}(1-\varepsilon^3)\|u_{h,\tau}(t)\|_{\Lp[\infty]}(\|\hat{e}(t)\|_{\Lp[3]}^3+\|\tilde{e}(t)\|_{\Lp[3]}^3)\\
&\; + \varepsilon^{-1}\|f(u_{h,\tau}(t))-f(u(t))\|\|\tilde{e}(t)\|\nonumber\\
&\; + \dpairing{\hat{\mathcal{R}}(t)}{\mDeltaN^{-1}\hat{e}(t)}{\Hk[1]}
 +\dpairing{\hat{\mathcal{S}}(t)}{\hat{e}(t)}{\Hk[1]}.\nonumber
\end{align}
The residual estimate \eqref{est_RcheckScheck} implies
 \begin{align*}
&2\dpairing{\hat{\mathcal{R}}(t)}{\mDeltaN^{-1}\hat{e}(t)}{\Hk[1]}
 +2\dpairing{\hat{\mathcal{S}}(t)}{\hat{e}(t)}{\Hk[1]}\\
 &\le \hat{\mu}_{-1}(t)^2+ \varepsilon^{-2}\hat{\mu}_{0}(t)^2+\varepsilon^{-4}\hat{\mu}_{1}(t)^2\\
 &+ \|\hat{e}(t)\|_{\Hmonecirc}^2 + \varepsilon^2\|\hat{e}(t)\|^2+ \varepsilon^4\|\nabla \hat{e}(t)\|^2.
 \end{align*}
Furthermore, we estimate
$$
4\eps^2 \|\hat{e}\|_{\Ltwo}^2\leq 4\eps^2 \|\hat{e}\|_{\Hmone}\|\nabla \hat{e}\|_{\Ltwo} \leq 
\frac{\eps^4}{2} \|\nabla \hat{e}\|_{\Ltwo}^2 + 8\|\hat{e}\|_{\Hmone}^2
\,,
$$
and
$$
4\varepsilon(1-\varepsilon^3) \|\nabla \hat{e}\| \|\nabla \tilde{e}\|\leq 
\frac{\eps^4}{2} \|\nabla \hat{e}\|^2  + 8\varepsilon^{-2}(1-\varepsilon^3)^2  \|\nabla \tilde{e}\|^2\,.
$$
Hence, we use the previous three inequalities to estimate the corresponding terms on the right-hand side of  \eqref{BM11_eq9a_stoch} and obtain after integrating the result over $(0,t)$ 
that
\begin{align}\label{est_x1}
& \|\hat{e}(t)\|_{\Hmonecirc}^2+ {\varepsilon^4} \int_0^t\|\nabla \hat{e}(s)\|^2\d s\nonumber\\
& \le \int_0^t\left(9+4(1-\varepsilon^3)\Lambda_{CH}(s)\right)\|\hat{e}(s)\|_{\Hmonecirc}^2\d s \nonumber\\
&\quad  + 48\varepsilon^{-1}\int_0^t\|u_{h,\tau}(s)\|_{\Lp[\infty]}\|\hat{e}(s)\|_{\Lp[3]}^3\d s 
 + 48\varepsilon^{-1}\int_0^t \|u_{h,\tau}(s)\|_{\Lp[\infty]}\|\tilde{e}(s)\|_{\Lp[3]}^3\d s \nonumber\\
&\quad +\int_0^t\left( \hat{\mu}_{-1}(s)^2+ \varepsilon^{-2}\hat{\mu}_{0}(s)^2+\varepsilon^{-4}\hat{\mu}_{1}(s)^2\right)\d s + \|\hat{e}(0)\|_{\Hmonecirc}^2 \nonumber\\
&\quad + 4\varepsilon^{2}\int_0^t\|\tilde{e}(s)\|^2\d s +4(1-\varepsilon^3)\int_0^t\Lambda_{CH}(s)\|\tilde{e}(s)\|_{\Hmonecirc}^2\d s\\
&\quad +\left(2\varepsilon(1-\varepsilon^3) + 8\varepsilon^{-2}(1-\varepsilon^3)^2\right)\int_0^t  \|\nabla \tilde{e}(s)\|^2\d s\nonumber\\
&\quad + 2\varepsilon^{-1}\int_0^t\|f(u_{h,\tau}(s))-f(u(s))\|\|\tilde{e}(s)\|\d s.\nonumber
\end{align}
Next, we bound the nonlinear term on the right-hand side of the above inequality using the Cauchy-Schwarz inequality as
\begin{align}\label{est_f}
\nonumber &\int_0^t\|f(u_{h,\tau}(s))-f(u(s))\|\|\tilde{e}(s)\|\d s\\
&\qquad \le \varepsilon^{-3}\left(\int_0^t\varepsilon^3\|f(u_{h,\tau}(s))-f(u(s))\|^2\d s\right)^{1/2}\left(\varepsilon\int_0^t\|\tilde{e}(s)\|^2\d s\right)^{1/2}
\\
&\qquad \le \varepsilon^{-3}\left(2\int_0^t\varepsilon^3\big(\|f(u_{h,\tau}(s))\|^2 + \|f(u(s))\big)\|^2\d s\right)^{1/2}
\left(\varepsilon\int_0^t\|\tilde{e}(s)\|^2\d s\right)^{1/2}\,,\nonumber
\end{align}
Recalling the definition of $\Omega_{\delta}$ using the continuous embedding $\mathbb{H}^1\subset\mathbb{L}^6$ we estimate
\begin{align*}
\varepsilon^3\|f(u(t))\|^2 & =\varepsilon^3\|u^3(t)-u(t)\|^2\le \varepsilon^3\big(\|u(t)\|_{\Lp[6]}^6+\|u(t)\|_{\Lp[2]}^2\big)
\le C\varepsilon^3\big(\|u(t)\|_{\Hone}^6+\|u(t)\|_{\Lp[2]}^2\big)
\\ &\le C\tilde{\eps}^{-\delta}\,.
\end{align*}
Further, assuming without loss of generality that $\tilde{\eps}^{-\delta}\gg 1$ we estimate
$$
\|f(u_{h,\tau})\|^2 \leq C (C_{h,\infty}^6 + C_{h,\infty}^2) \leq C\tilde{\eps}^{-\delta}\,.
$$
On noting the definition of $\Omega_{\tilde{\varepsilon}}$ 
we estimate (\ref{est_f}) using the last two inequalities and the Poincar\'e inequality as
\begin{align}\label{est2}
\int_0^t\|f(u_{h,\tau}(s))-f(u(s))\|\|\tilde{e}(s)\|\d s \leq C \eps^{-3}\tilde{\eps}^{-\delta/2}\tilde{\eps}^{1/2}\,.
\end{align}
Using Lemma \ref{Lemma_L3_est}, the definition of $\Omega_{\tilde{\eps}}$ (along with the fact that
$\|\tilde{e}\|_{\Lp[\infty]}\le C_\infty $ on $\Omega_\infty$ for $d=3$) we deduce
\begin{align}\label{est3}
&\int_0^t \|\tilde{e}(s)\|_{\Lp[3]}^3\d s \le C_I C_\infty^{1-a} \sup_{s\in[0,t]}\|\tilde{e}(s)\|_{\Hmonecirc}^a \int_0^t \|\nabla \tilde{e}(s)\|^2\d s
\leq \eps^{-1}C_I C_\infty^{1-a} \tilde{\eps}^{1+\corrl{a/2}}\,,
\end{align}
with $a=1$ in $d=2$ and $a=4/5$ in $d=3$, respectively.

We substitute (\ref{est2}), (\ref{est3}) into (\ref{est_x1}) and arrive at
\begin{align}
& \|\hat{e}(t)\|_{\Hmonecirc}^2+\frac{\varepsilon^4}{2} \int_0^t\|\nabla \hat{e}(s)\|^2\d s\nonumber\\
& \quad \le \int_0^t\left(9+4(1-\varepsilon^3)\Lambda_{CH}(s)\right)\|\hat{e}(s)\|_{\Hmonecirc}^2\d s \nonumber\\
&\qquad  + {48 C_{h,\infty}\varepsilon^{-1}\int_0^t\|\hat{e}(s)\|_{\Lp[3]}^3\d s } 
+\int_0^t\left( \hat{\mu}_{-1}(s)^2+ \varepsilon^{-2}\hat{\mu}_{0}(s)^2+\varepsilon^{-4}\hat{\mu}_{1}(s)^2\right)\d s + \|\hat{e}(0)\|_{\Hmonecirc}^2 \nonumber\\
&\qquad +\sqrt{\tilde{\varepsilon}}\biggr[4\varepsilon \sqrt{\tilde{\varepsilon}}+4(1-\varepsilon^3)\sqrt{\tilde{\varepsilon}}\int_0^t(\Lambda_{CH}(s))_+\d s + 
\left(2(1-\varepsilon^3)+8\varepsilon^{-3}(1-\varepsilon^3)^2\right) \sqrt{\tilde{\varepsilon}} \nonumber\\
&\qquad\qquad\quad + C C_I C_{h,\infty} C_\infty^{1-a}\varepsilon^{-2}\tilde{\varepsilon}^{1/2+\corrl{a/2}}\biggr] + C\varepsilon^{-4}\tilde{\eps}^{1/2-\delta}. \nonumber
\end{align}
Finally, on noting that (on $\Omega_\infty$ if $d=3$)
$$
\int_{0}^t\|\hat{e}\|_{\Lp[3]}^3\leq C_I C_\infty^{1-a}\sup_{t\in(0,t)}\|\hat{e}\|_{\mathbb{H}^{-1}}^a\int_0^t\|\nabla \hat{e}\|^2\d s\,,
$$
the proof can be concluded by the generalized Gronwall Lemma \ref{Generalized_Gronwall} with
\begin{align*}
y_1(t)&=\|\hat{e}(t)\|_{\Hmonecirc}^2,\quad y_2(t)=\frac{\varepsilon^4}{2}\|\nabla \hat{e}(t)\|^2,\quad 
y_3(t)=48\varepsilon^{-1}C_{h,\infty}\|\hat{e}(t)\|_{\Lp[3]}^3\,,
\end{align*}
with $\beta=a/2$ and $\alpha$, $A$, $B$ defined as above, cf., \cite[proof of Proposition 4.4]{BartelsMueller2011}.
\end{proof}

\section{Error estimate for the numerical approximation of the stochastic Cahn-Hilliard equation}\label{Subsec_sCH_Est}

We state the following bound for the error $\hat{e}$, which follows by the triangle inequality from the energy estimate for $\hat{u}$ in \cite[Sec.~2.1]{DaPratoDebussche};
an analogous estimate for the discrete solution $\hat{u}_{h,\tau}$ can be derived
by combining \cite[Sec.~2.1]{DaPratoDebussche} with \cite[Lemma 3.2, iv)]{Banas19}, \cite{fp05}.
\begin{lemma}\label{Prop_boundedness_checke}
There exists a constant $\hat{C}_0>0$, such that 
$$
\E{\left(\sup_{t\in[0,T]}\left\|\hat{e}(t)\right\|_{\Hmonecirc}^2 + \varepsilon \int_0^T \left\|\nabla \hat{e}(s)\right\|^2\d s\right)^{2}}\le \hat{C}_0\,,
$$
where the constant $\hat{C}_0$ depends on $\|u_0\|_{\mathbb{H}^1}$, $\D$, $T$.
\end{lemma}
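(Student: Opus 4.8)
The plan is to combine the triangle inequality with a priori energy estimates for the continuous and the discrete nonlinear parts of the splitting. Writing $\hat{e}=\hat{u}_{h,\tau}-\hat{u}$ and applying $(a+b)^2\le 2a^2+2b^2$, it suffices to bound, uniformly in the discretization parameters $h,\tau$ and the noise truncation index $r$, the two quantities
\[
\E{\Big(\sup_{t\in[0,T]}\|\hat{u}(t)\|_{\Hmonecirc}^2 + \varepsilon\int_0^T\|\nabla\hat{u}(s)\|^2\d s\Big)^{2}}
\quad\text{and}\quad
\E{\Big(\sup_{t\in[0,T]}\|\hat{u}_{h,\tau}(t)\|_{\Hmonecirc}^2 + \varepsilon\int_0^T\|\nabla\hat{u}_{h,\tau}(s)\|^2\d s\Big)^{2}}.
\]

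For the continuous RPDE \eqref{CHstoch_nonlin} I would use the pathwise energy estimate of \cite[Sec.~2.1]{DaPratoDebussche}. Since $\int_\D u_0^\varepsilon\,\d x=0$ one has $\int_\D\hat{u}(t)\,\d x=0$ for all $t$, so testing $\partial_t\hat{u}=\Delta\hat{w}$ with $\mDeltaN^{-1}\hat{u}$ yields $\tfrac12\tfrac{\d}{\d t}\|\hat{u}(t)\|_{\Hmonecirc}^2+\varepsilon\|\nabla\hat{u}(t)\|^2+\varepsilon^{-1}(f(u(t)),\hat{u}(t))=0$. With $f(u)=u^3-u$ and $u=\hat{u}+\tilde{u}$ one writes $(u^3,\hat{u})=\|u\|_{\Lp[4]}^4-(u^3,\tilde{u})$, and by H\"older's and Young's inequalities one gets $(f(u),\hat{u})\ge -C\big(\|\tilde{u}\|_{\Lp[4]}^4+\|\tilde{u}\|^2+1\big)$. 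Integrating in time and using $\hat{u}(0)=u_0^\varepsilon$ gives
\[
\sup_{t\in[0,T]}\|\hat{u}(t)\|_{\Hmonecirc}^2+2\varepsilon\int_0^T\|\nabla\hat{u}(s)\|^2\d s\le \|u_0\|_{\Hmonecirc}^2+C\varepsilon^{-1}\int_0^T\big(\|\tilde{u}(s)\|_{\Lp[4]}^4+\|\tilde{u}(s)\|^2+1\big)\d s.
\]
Squaring, taking expectation, and using the embedding $\Hone\hookrightarrow\Lp[4]$ (for $d\le3$) together with $\sup_{t\in[0,T]}\E{\|\tilde{u}(t)\|_{\Hone}^{8}}<\infty$ yields the bound for the continuous part. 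The last moment bound holds because $\tilde{u}$ is the stochastic convolution solving the linear fourth-order SPDE \eqref{CHstoch_lin}: by the factorization method (or directly by It\^o's isometry and the Burkholder inequality, using the smoothing $\|\nabla S(r)\|_{\Ltwo\to\Ltwo}\le C(\varepsilon r)^{-1/4}$ of the associated analytic semigroup $S$) it has finite moments of all orders in $C([0,T];\Hone)$ under the standing summability assumptions on $\mathbf{Q}$ and $\sigma$.

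For the fully discrete nonlinear part \eqref{CHstoch_nonlin_discr} I would instead use $\hat{u}_{h,\tau}=u_{h,\tau}-\tilde{u}_{h,\tau}$ and apply the triangle inequality once more, reducing to the corresponding fourth-moment bounds for $u_{h,\tau}$ and $\tilde{u}_{h,\tau}$. Both are obtained from discrete energy estimates that are uniform in $h,\tau,r$: for $u_{h,\tau}$ one tests \eqref{StochCH_discr} with $w_h^n$ and $\tau_n^{-1}(u_h^n-u_h^{n-1})$ and controls the stochastic increments by discrete It\^o-type bounds, exactly as in \cite{fp05} for the deterministic part and \cite[Lemma~3.2,~iv)]{Banas19} for the noise contributions (the truncation $\WW^r$ and the time-freezing of $\sigma$ only reduce the noise, so the bounds are uniform in $r$); since every $u_h^n$ is mean-zero this gives $\E{\sup_n\|u_h^n\|_{\Hone}^4}\le C$ and, via $\sum_n\tau_n\|\nabla u_h^n\|^2\le T\sup_n\|\nabla u_h^n\|^2$, also $\E{(\varepsilon\sum_n\tau_n\|\nabla u_h^n\|^2)^2}\le C$. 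The analogous bounds for $\tilde{u}_{h,\tau}$ follow from the stability of the Galerkin scheme \eqref{CHstoch_lin_discr}, equivalently from Lemma~\ref{Lem_stochCHdiscrtrafo} together with the discrete energy bound for \eqref{CH_discr_y} and the $\Hone$-stability of $P_h^n$ on quasi-uniform meshes. Passing to the piecewise linear interpolants costs only harmless constants, and collecting everything in the triangle inequality yields the claim with $\hat{C}_0$ depending on $\|u_0\|_{\mathbb{H}^1}$, $\D$, $T$ (and on the fixed data $\varepsilon,\sigma,\mathbf{Q}$).

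The main obstacle is that we need the \emph{fourth} power of the energy quantity, i.e.\ essentially eighth-order moments in $\Hone$-type norms, and not merely its expectation: for the continuous equation this forces one to establish the higher-moment regularity of the stochastic convolution $\tilde{u}$, and for the discrete scheme one must propagate eighth-order moments through the discrete energy argument while keeping the stochastic-increment estimates uniform in $h,\tau,r$. Since the nonlinearity makes the energy relation only an inequality, the splitting $f=(\,\cdot\,)^3-(\,\cdot\,)$ and the monotonicity of the cubic (as in \cite{fp05}, \cite{BartelsMueller2011}) is what keeps the right-hand side of the differential (resp.\ summation) inequality free of $\hat{u}$ (resp.\ $\hat{u}_h^n$), and hence integrable in $\omega$.
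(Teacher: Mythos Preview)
Your approach is essentially the one the paper indicates: the paper does not give a detailed proof but states that the bound ``follows by the triangle inequality from the energy estimate for $\hat{u}$ in \cite[Sec.~2.1]{DaPratoDebussche}; an analogous estimate for the discrete solution $\hat{u}_{h,\tau}$ can be derived by combining \cite[Sec.~2.1]{DaPratoDebussche} with \cite[Lemma~3.2,~iv)]{Banas19}, \cite{fp05}.'' Your proposal fleshes out exactly this outline (triangle inequality, the Da~Prato--Debussche energy argument for $\hat{u}$, and the discrete energy bounds from \cite{fp05}, \cite{Banas19} for $\hat{u}_{h,\tau}$), so it matches the paper's intended argument.
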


As a combination of the a posteriori estimates for the errors $\tilde{e}$ and $\hat{e}$ we conclude the following error estimate for the approximation error of the stochastic Cahn-Hilliard equation.
To simplify the notation in the theorem below we use the compact notation for the right hand sides of the a posteriori estimates.
We write the a posteriori estimate from Lemma~\ref{Lemma_CHest_etilde} as
\begin{align*}
 & \E{\sup_{t\in[0,T]}\|\tilde{e}(t)\|_{\Hmonecirc}^2+ \varepsilon\int_0^T \|\nabla \tilde{e}(s)\|^2 \d s} \leq \mathbb{E}[\tilde{\mathcal{R}}]\,,
\end{align*}
and the estimate from Lemma~\ref{Lemma_CHest_echeck} is written as
\begin{align*}
 &\sup_{t\in[0,T]}\|\hat{e}(t)\|_{\Hmonecirc}^2+\frac{\varepsilon^4}{2}\int_0^T \|\nabla \hat{e}(s)\|^2 \d s \leq \hat{\mathcal{R}}\,,
\end{align*}
$\mathbb{P}$-a.s. on $\Omega_\infty\cap \Omega_{\tilde{\varepsilon}}\cap \Omega_{\delta}$.
\begin{theorem}\label{StochCH_este}
Let the assumptions of Lemmas~\ref{Lemma_CHest_etilde},~\ref{Lemma_CHest_echeck}~and~\ref{Prop_boundedness_checke} hold.
Then the following error estimate holds
\begin{align*}
\sup_{t\in[0,T]}&\E{\mathbbm{1}_{\Omega_\infty}\left\|u_{h,\tau}(t)-u(t)\right\|_{\Hmonecirc}^2} + \varepsilon \int_0^T \E{\mathbbm{1}_{\Omega_\infty}\left\|\nabla \left[u_{h,\tau}(s)-u(s)\right]\right\|^2}\d s
\\
\le & C\Big(\mathbb{E}[\tilde{\mathcal{R}}] + \mathbb{E}[\mathbbm{1}_{\Omega_\infty\cap \Omega_{\tilde{\varepsilon}}\cap \Omega_{\delta}}\hat{\mathcal{R}}] 
+ \hat{C}_0^{1/2}\sqrt{\tilde{\eps}^{-1}\mathbb{E}[\tilde{\mathcal{R}}] + \tilde{\eps}^{\delta}C_0}\Big)\,,
\end{align*}
for any $0<\delta < 1/2$.
\end{theorem}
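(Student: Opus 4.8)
The plan is to combine the two a posteriori bounds by decomposing $u_{h,\tau}-u = \tilde e + \hat e$ via the discrete splitting, and then to separate the probability space into the "good" set $\Omega_\infty\cap\Omega_{\tilde\varepsilon}\cap\Omega_\delta$ (where the pathwise estimate from Lemma~\ref{Lemma_CHest_echeck} applies) and its complement. First I would write, using $u_{h,\tau}-u=\tilde e+\hat e$ and $\|a+b\|^2\le 2\|a\|^2+2\|b\|^2$,
\begin{align*}
\sup_{t}\E{\mathbbm{1}_{\Omega_\infty}\|u_{h,\tau}-u\|_{\Hmonecirc}^2}+\varepsilon\int_0^T\E{\mathbbm{1}_{\Omega_\infty}\|\nabla[u_{h,\tau}-u]\|^2}\,\d s
&\le 2\Big(\sup_t\E{\|\tilde e\|_{\Hmonecirc}^2}+\varepsilon\int_0^T\E{\|\nabla\tilde e\|^2}\,\d s\Big)\\
&\quad +2\Big(\sup_t\E{\mathbbm{1}_{\Omega_\infty}\|\hat e\|_{\Hmonecirc}^2}+\varepsilon\int_0^T\E{\mathbbm{1}_{\Omega_\infty}\|\nabla\hat e\|^2}\,\d s\Big)\,.
\end{align*}
The first bracket is $\le \mathbb{E}[\tilde{\mathcal R}]$ directly by Lemma~\ref{Lemma_CHest_etilde} (and $\varepsilon\ge\varepsilon^4$ handles the gradient term with a harmless constant). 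The work is all in the second bracket.

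For the $\hat e$ contribution I would split $\Omega_\infty = (\Omega_\infty\cap\Omega_{\tilde\varepsilon}\cap\Omega_\delta)\cup\big(\Omega_\infty\setminus(\Omega_{\tilde\varepsilon}\cap\Omega_\delta)\big)$. On the first piece, Lemma~\ref{Lemma_CHest_echeck} gives the pathwise bound $\sup_t\|\hat e\|_{\Hmonecirc}^2+\tfrac{\varepsilon^4}{2}\int_0^T\|\nabla\hat e\|^2\,\d s\le\hat{\mathcal R}$, so that piece contributes $\mathbb{E}[\mathbbm{1}_{\Omega_\infty\cap\Omega_{\tilde\varepsilon}\cap\Omega_\delta}\hat{\mathcal R}]$ (again absorbing the $\varepsilon$ vs $\varepsilon^4$ discrepancy into $C$). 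On the complement, I would use Cauchy--Schwarz in the form
$$
\E{\mathbbm{1}_{\Omega_\infty\setminus(\Omega_{\tilde\varepsilon}\cap\Omega_\delta)}\Big(\sup_t\|\hat e\|_{\Hmonecirc}^2+\varepsilon\int_0^T\|\nabla\hat e\|^2\,\d s\Big)}
\le \mathbb{P}\big[\Omega\setminus(\Omega_{\tilde\varepsilon}\cap\Omega_\delta)\big]^{1/2}\,\hat C_0^{1/2}\,,
$$
using the fourth-moment bound of Lemma~\ref{Prop_boundedness_checke} for the second factor.

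The remaining step — and the main obstacle — is to bound $\mathbb{P}[\Omega\setminus(\Omega_{\tilde\varepsilon}\cap\Omega_\delta)]$ by computable/controllable quantities, since this is what produces the term $\tilde\varepsilon^{-1}\mathbb{E}[\tilde{\mathcal R}]+\tilde\varepsilon^\delta C_0$ under the square root. I would use a union bound $\mathbb{P}[\Omega\setminus(\Omega_{\tilde\varepsilon}\cap\Omega_\delta)]\le\mathbb{P}[\Omega\setminus\Omega_{\tilde\varepsilon}]+\mathbb{P}[\Omega\setminus\Omega_\delta]$. For the first term, the definition \eqref{def_Omega_tildeeps} of $\Omega_{\tilde\varepsilon}$ together with Markov's inequality and Lemma~\ref{Lemma_CHest_etilde} gives
$$
\mathbb{P}[\Omega\setminus\Omega_{\tilde\varepsilon}]=\mathbb{P}\Big[\sup_t\|\tilde e\|_{\Hmonecirc}^2+\varepsilon\int_0^T\|\nabla\tilde e\|^2\,\d s>\tilde\varepsilon\Big]\le\tilde\varepsilon^{-1}\mathbb{E}[\tilde{\mathcal R}]\,,
$$
where the expectation on the left matches $\mathbb{E}[\tilde{\mathcal R}]$ via Lemma~\ref{Lemma_CHest_etilde} (this is exactly why the strong $L^2(\Omega;L^\infty)$-type estimate of that lemma, rather than the weaker one, is needed). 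For the second term, the definition of $\Omega_\delta$ and the higher-moment energy estimate \cite[Lemma~2.1~ii)]{Banas19} give, again by Markov, $\mathbb{P}[\Omega\setminus\Omega_\delta]\le\tilde\varepsilon^\delta\,C_0$ with $C_0$ the constant controlling $\E{\sup_t\varepsilon^3(\|u\|_{\Hone}^6+\|u\|^2)}$. Putting these two pieces together yields $\mathbb{P}[\Omega\setminus(\Omega_{\tilde\varepsilon}\cap\Omega_\delta)]\le\tilde\varepsilon^{-1}\mathbb{E}[\tilde{\mathcal R}]+\tilde\varepsilon^\delta C_0$, and feeding this into the Cauchy--Schwarz bound above produces the asserted $\hat C_0^{1/2}\sqrt{\tilde\varepsilon^{-1}\mathbb{E}[\tilde{\mathcal R}]+\tilde\varepsilon^\delta C_0}$ term. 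Collecting the three contributions and relabelling constants gives the statement. The one delicate point to check carefully is that the event defining $\Omega_{\tilde\varepsilon}$ really does control precisely the norm appearing in Lemma~\ref{Lemma_CHest_etilde}, so that the Markov step is legitimate; everything else is bookkeeping with $2\|a\|^2+2\|b\|^2$ splittings and absorbing $\varepsilon$-powers into $C$.
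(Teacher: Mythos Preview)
Your proposal is correct and follows essentially the same route as the paper's proof: split $e=\tilde e+\hat e$, bound the $\tilde e$-part by Lemma~\ref{Lemma_CHest_etilde}, decompose the $\hat e$-part over $\Omega_\infty\cap\Omega_{\tilde\varepsilon}\cap\Omega_\delta$ and its complement, apply Lemma~\ref{Lemma_CHest_echeck} on the good set, and on the complement use Cauchy--Schwarz together with Lemma~\ref{Prop_boundedness_checke} and a union bound plus Markov's inequality (invoking Lemma~\ref{Lemma_CHest_etilde} and \cite[Lemma~2.1~ii)]{Banas19}) to control $\mathbb{P}[\Omega_{\tilde\varepsilon}^c]+\mathbb{P}[\Omega_\delta^c]$. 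You also correctly flag the reason why the stronger $L^2(\Omega;L^\infty([0,T];\Hmonecirc))$-estimate of Lemma~\ref{Lemma_CHest_etilde} is the one that makes the Markov step for $\Omega_{\tilde\varepsilon}^c$ legitimate.
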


\begin{rem}\label{rem_teps}
\corrl{
The restriction $0<\delta < 1/2$ in the above estimate is necessary to control the term $\tilde{\eps}^{1/2-\delta}$ in (\ref{est_uhat});
the optimal choice appears to be $\delta = 1/3$.
If we assume the convergence of the a posteriori estimate in Lemma~\ref{Lemma_CHest_etilde}
(i.e.,  that $\mathbb{E}[\tilde{\mathcal{R}}]\rightarrow 0$ for $h,\tau\rightarrow 0$)
we may choose $\tilde{\eps} \approx \mathbb{E}[\tilde{\mathcal{R}}]^{\alpha}$
for some $\alpha < 1$. The choice that yields the best order of convergence in the last term in the above estimates for $\delta=1/3$ is $\alpha = 3/4$.

We also stress, that the restriction of the estimate to $\Omega_\infty$ is only required in spatial dimension $d=3$ since $\Omega_\infty\equiv \Omega$ for $d=2$.
}
\end{rem}
\begin{proof} We split the error as $e = {u}_{h,\tau}- {u}  = \hat{e} + \tilde{e} $ and assume 
without loss of generality that
$\tilde{\varepsilon}, \mathrm{tol}<1$ and that  $\hat{C}_0\ge 1$ in Lemma~\ref{Prop_boundedness_checke}.

Then by the triangle and Causchy-Schwarz inequalities we estimate
\begin{align*}
&\sup_{t\in[0,T]}\E{\mathbbm{1}_{\Omega_\infty}\left\|e(t)\right\|_{\Hmonecirc}^2} + \varepsilon \int_0^T \E{\mathbbm{1}_{\Omega_\infty}\left\|\nabla e(s)\right\|^2}\d s\\
& \le 2 \sup_{t\in[0,T]}\E{\mathbbm{1}_{\Omega_\infty}\left\|\tilde{e}(t)\right\|_{\Hmonecirc}^2} + 2\varepsilon \int_0^T \E{\mathbbm{1}_{\Omega_\infty}\left\|\nabla \tilde{e}(s)\right\|^2}\d s\\
&\qquad+ 2\E{\mathbbm{1}_{\Omega_\infty}\sup_{t\in[0,T]}\left\|\hat{e}(t)\right\|_{\Hmonecirc}}^2 + 2\varepsilon \int_0^T \E{\mathbbm{1}_{\Omega_\infty}\left\|\nabla \hat{e}(s)\right\|^2}\d s\\
& = 2 \sup_{t\in[0,T]}\E{\mathbbm{1}_{\Omega_\infty}\left\|\tilde{e}(t)\right\|_{\Hmonecirc}^2} + 2\varepsilon \int_0^T \E{\mathbbm{1}_{\Omega_\infty}\left\|\nabla \tilde{e}(s)\right\|^2}\d s\\
&\qquad+ 2\E{\mathbbm{1}_{\Omega_\infty}\mathbbm{1}_{\Omega_{\tilde{\varepsilon}}\cap \Omega_{\delta}}\left(\sup_{t\in[0,T]}\left\|\hat{e}(t)\right\|_{\Hmonecirc}^2 + \varepsilon \int_0^T \left\|\nabla \hat{e}(s)\right\|^2\d s\right)}\\
&\qquad+ 2\E{\mathbbm{1}_{\Omega_\infty}\mathbbm{1}_{(\Omega_{\tilde{\varepsilon}}\cap \Omega_{\delta})^c}\left(\sup_{t\in[0,T]}\left\|\hat{e}(t)\right\|_{\Hmonecirc}^2 + \varepsilon \int_0^T \left\|\nabla \hat{e}(s)\right\|^2\d s\right)}\\
&\le 2 \sup_{t\in[0,T]}\E{\mathbbm{1}_{\Omega_\infty}\left\|\tilde{e}(t)\right\|_{\Hmonecirc}^2} + 2\varepsilon \int_0^T \E{\mathbbm{1}_{\Omega_\infty}\left\|\nabla \tilde{e}(s)\right\|^2}\d s\\
&\qquad+ 2\E{\mathbbm{1}_{\Omega_\infty\cap\Omega_{\tilde{\varepsilon}}\cap \Omega_{\delta}}\left(\sup_{t\in[0,T]}\left\|\hat{e}(t)\right\|_{\Hmonecirc}^2 + \varepsilon \int_0^T \left\|\nabla \hat{e}(s)\right\|^2\d s\right)}\\
&\qquad+ 2\mathbb{P}\left[\Omega_{\tilde{\varepsilon}}^c\cup \Omega_{\delta}^c\right]^{1/2}\E{\mathbbm{1}_{\Omega_\infty}\left(\sup_{t\in[0,T]}\left\|\hat{e}(t)\right\|_{\Hmonecirc}^2 + \varepsilon \int_0^T \left\|\nabla \hat{e}(s)\right\|^2\d s\right)^2}^{1/2}
\\
& =I_1+I_2+I_3.
\end{align*}
To estimate $I_1$ and $I_2$ we can directly use Lemma~\ref{Lemma_CHest_etilde} and Lemma~\ref{Lemma_CHest_echeck}, respectively.

We estimate 
$\mathbb{P}\left[\Omega_{\tilde{\varepsilon}}^c\cup \Omega_{\delta}^c\right]\le \mathbb{P}\left[\Omega_{\tilde{\varepsilon}}^c\right]+\mathbb{P}\left[ \Omega_{\delta}^c\right]$ 
and get by Markov's inequality
\begin{align*}
 \mathbb{P}\left[\Omega_{\tilde{\varepsilon}}^c\right] & = \mathbb{P}\left[\sup_{t\in[0,T]}\|\tilde{e}(t)\|_{\Hmonecirc}^2+\varepsilon\int_0^T\|\nabla \tilde{e}(s)\|^2\d s> \tilde{\varepsilon}\right]\\
& \le \frac{1}{\tilde{\varepsilon}} \E{\sup_{t\in[0,T]}\|\tilde{e}(t)\|_{\Hmonecirc}^2+\varepsilon\int_0^T\|\nabla \tilde{e}(s)\|^2\d s}.
\end{align*}
Analogically, we obtain
\begin{align*}
\mathbb{P}\left[ \Omega_{\delta}^c\right] & = \mathbb{P}\left[\sup_{t\in (0,T)}\varepsilon^3\big(\|u(t)\|_{\Hone}^6+\|u(t)\big)\|^2)> \tilde{\eps}^{-\delta}\right]\\
& \le \tilde{\eps}^{\delta} \E{\sup_{t\in (0,T)}\varepsilon^3\big(\|u(t)\|_{\Hone}^6+\|u(t)\|^2\big)}.
\end{align*}
We respectively use  Lemma \ref{Lemma_CHest_etilde} and the higher-moment bound
$\displaystyle \E{\sup_{t\in(0,T)}\varepsilon^3\big(\|u(t)\|_{\Hone}^6+\|u(t)\|^2)}\le C_0$ (which follows from the energy bound \cite[Lemma 2.1 ii)]{Banas19}) 
to bound the right-hands sides in the last two inequalities.
Consequently, we estimate $I_3$ using Lemma~\ref{Prop_boundedness_checke} as
\begin{align*}
I_3 
& \leq \E{\mathbbm{1}_{\Omega_\infty}\left(\sup_{t\in[0,T]}\left\|\hat{e}(t)\right\|_{\Hmonecirc}^2 + \varepsilon \int_0^T \left\|\nabla \hat{e}(s)\right\|^2\d s\right)^2}^{1/2}
\Big(\tilde{\eps}^{-1}\mathbb{E}[\tilde{\mathcal{R}}] + \tilde{\eps}^{1/4}C_0\Big)^{1/2}
\\
& \leq \hat{C}_0^{1/2}\Big(\tilde{\eps}^{-1}\mathbb{E}[\tilde{\mathcal{R}}] + \tilde{\eps}^{\delta}C_0\Big)^{1/2}\,.
\end{align*}
The statement then follows after collecting the bounds for $I_1$, $I_2$, $I_3$.
\end{proof}

\section{Numerical experiments}\label{Sec_sCH_NumResults}

In the experiments below we use a Monte-Carlo approach to solve the discrete stochastic system (\ref{StochCH_discr});
i.e., (\ref{StochCH_discr}) is solved pathwise using several indepenedent realizations of the noise term.
For a given realization of the noise, the nonlinear system for the solution $u_h^n$ is solved using the Newton method.
The size of the time steps $\tau_n$ is chosen adaptively according to the number of iteration of the Newton solver at the previous time level (for the tolerance of the Newton residual $5\times 10^{-9}$):
if the number of iteration is below $5$ the time-step is chosen as $\tau_n = 2\tau_{n-1}$,
if the number of iterations exceeds $50$ we set $\tau_n = 0.5\tau_{n-1}$ otherwise we take $\tau_{n}=\tau_{n-1}$. On average the Newton solver finished after about $10$ iterations and always stayed below
$50$ iterations, the resulting time-step size in the computations was of order $10^{-6}$.
The spatial mesh on the current time level is obtained by local refinement and coarsening of the mesh from the previous time level 
based on the local contributions of the spatial error indicator $\eta_{\mathrm{SPACE},3}^n$ 
from Remark~\ref{Rem_estRySy} until a tolerance $\mathrm{tol}=10^{-2}$ is reached, cf. \cite{lb09}, \cite{BartelsMueller2011}.
We choose to use only the indicator $\eta_{\mathrm{SPACE},3}^n$ for simplicity. In general, the remaining indicators should be involved in the refinement algorithm as well,
nevertheless, based on the results below the indicator seems to provide a reasonable criterion for mesh refinement, cf. \cite{lb09}.
\corrl{For simplicity we do not compute the solution of (\ref{CHstoch_nonlin_discr}) (or (\ref{CH_discr_y})) explicitly but evaluate the error indicators using
the solution $u_{h,\tau}$ of (\ref{StochCH_discr}) {which is the actual solution of practical interest}.
We also neglect the contribution from the noise error indicators $\eta_{\mathrm{NOISE}}$, which seams to be a reasonable simplification for sufficiently smooth noise.
The numerical experiments (for the data given below) indicate that  
the noise term given by (\ref{noise}) can be approximated on a rather coarse mesh, 
and consequently also the solution $\tilde{u}_{h,\tau}$ of (\ref{CHstoch_lin_discr}) is well resolved on a coarse mesh.
Hence, the above simplifications are justified in the present setting.
Furthermore, we do not compute the coarsening error indicators explicitly as this would be impractical.
}

We consider an initial condition on $\D=(-1,1)^2$ given by two concentric circles 
$$
 u_0(x) = -\tanh\left(\frac{\max\{-(|x|-r_1), |x|-r_2\}}{\sqrt{2}\varepsilon}\right)\,,
$$
with $r_1 = 0.2$, $r_2 = 0.55$, see Figure~\ref{fig_u0}.
\begin{figure}[htp!]
\centering
\includegraphics[scale=0.25, trim=20 0 0 0, clip]{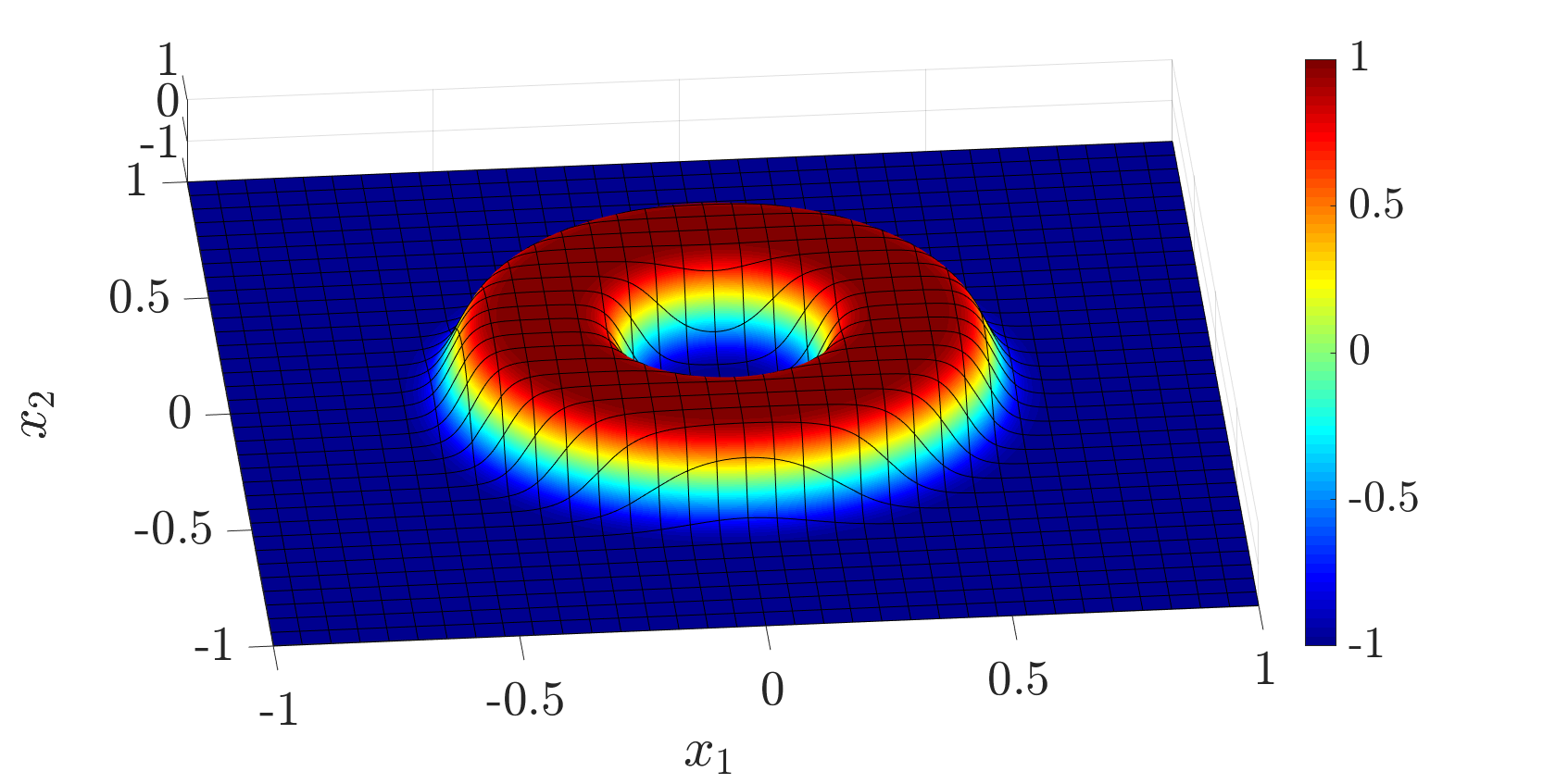}
\caption{Initial condition for $\eps=1/32$.}
\label{fig_u0}
\end{figure}

The remaining parameters in the simulation were $T= 0.012$, $\sigma \equiv 1$, {\red $\eps=1/32$}.
We employ a finite-dimensional Wiener process 
\begin{equation}\label{noise}
\Delta_n \WW^r(x) = \frac{1}{2}\sum_{l_1,l_2=1}^{4} \cos(\pi l_1(x_1-1))  \cos(\pi l_2(x_2-1))\Delta_n W_{(l_1,l_2)}\,,
\end{equation}
with Brownian increments $\Delta_n W_{(l_1,l_2)}$.
In the figures below the mesh is colored according to the values of the corresponding numerical solution, unless it is displayed in black.

In the deterministic setting the solution with the above initial condition evolves as follows:
both circles shrink until the inner circle disappear and the solution converges to a steady state which is represented by one circular interface.
The closing of the inner circle represents a topological change of the interface which is represented by the peak 
of the principal eigenvalue, cf. Figure~\ref{Fig_CHstoch_Lambda}.
We observe that, apart from small oscillations due to the influence of the noise,
the evolution of the stochastic solution is similar to the deterministic case, see Figure \ref{Fig_CHstoch_uh}.
\begin{center}\begin{figure}[htp]
\begin{minipage}{0.48\textwidth}
\begin{figure}[H]
\centering
\includegraphics[scale=0.32, trim=45 0 50 20, clip]{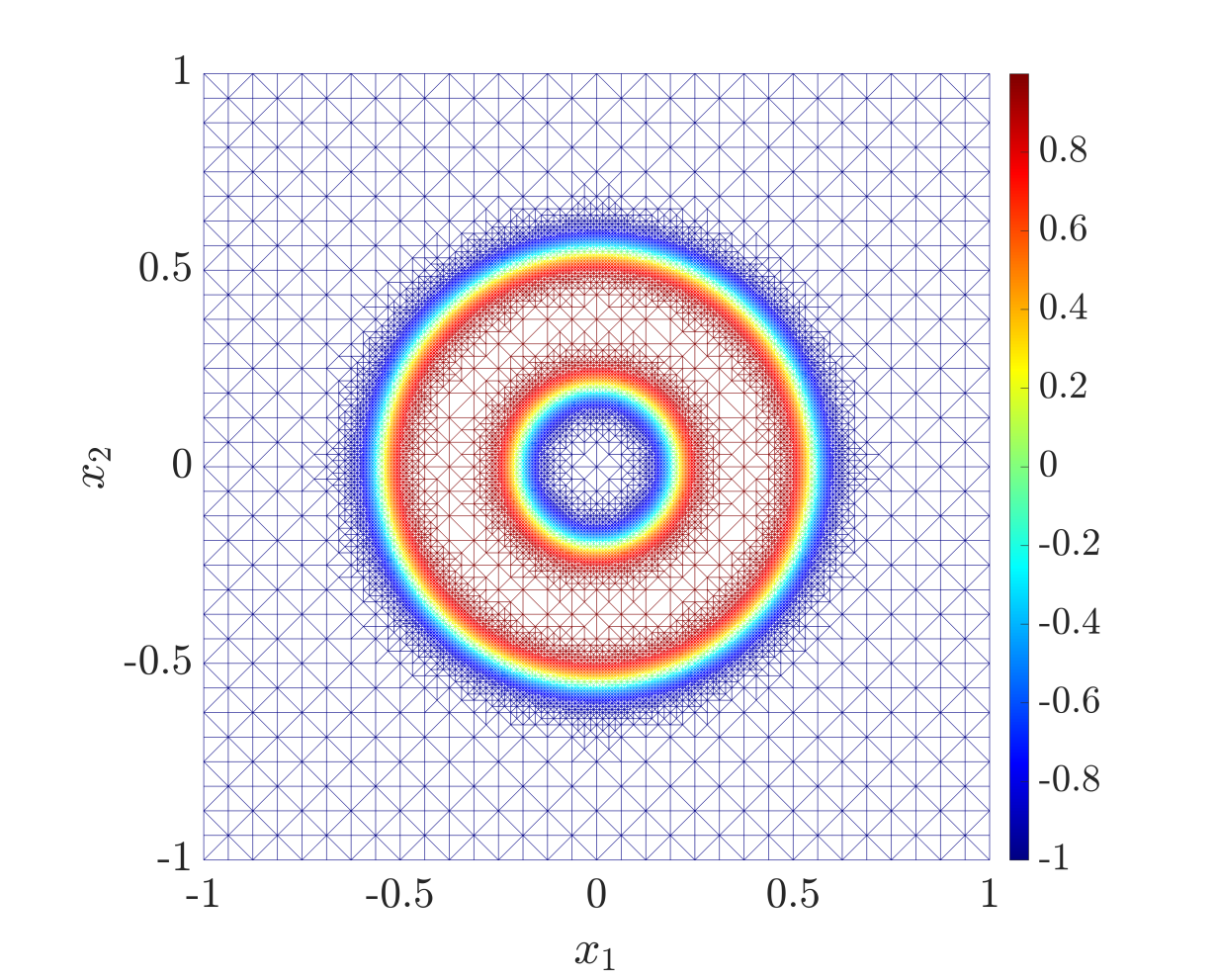}
\end{figure}
\end{minipage}
\hfill
\begin{minipage}{0.48\textwidth}
\begin{figure}[H]
\centering
\includegraphics[scale=0.32, trim=45 0 50 20, clip]{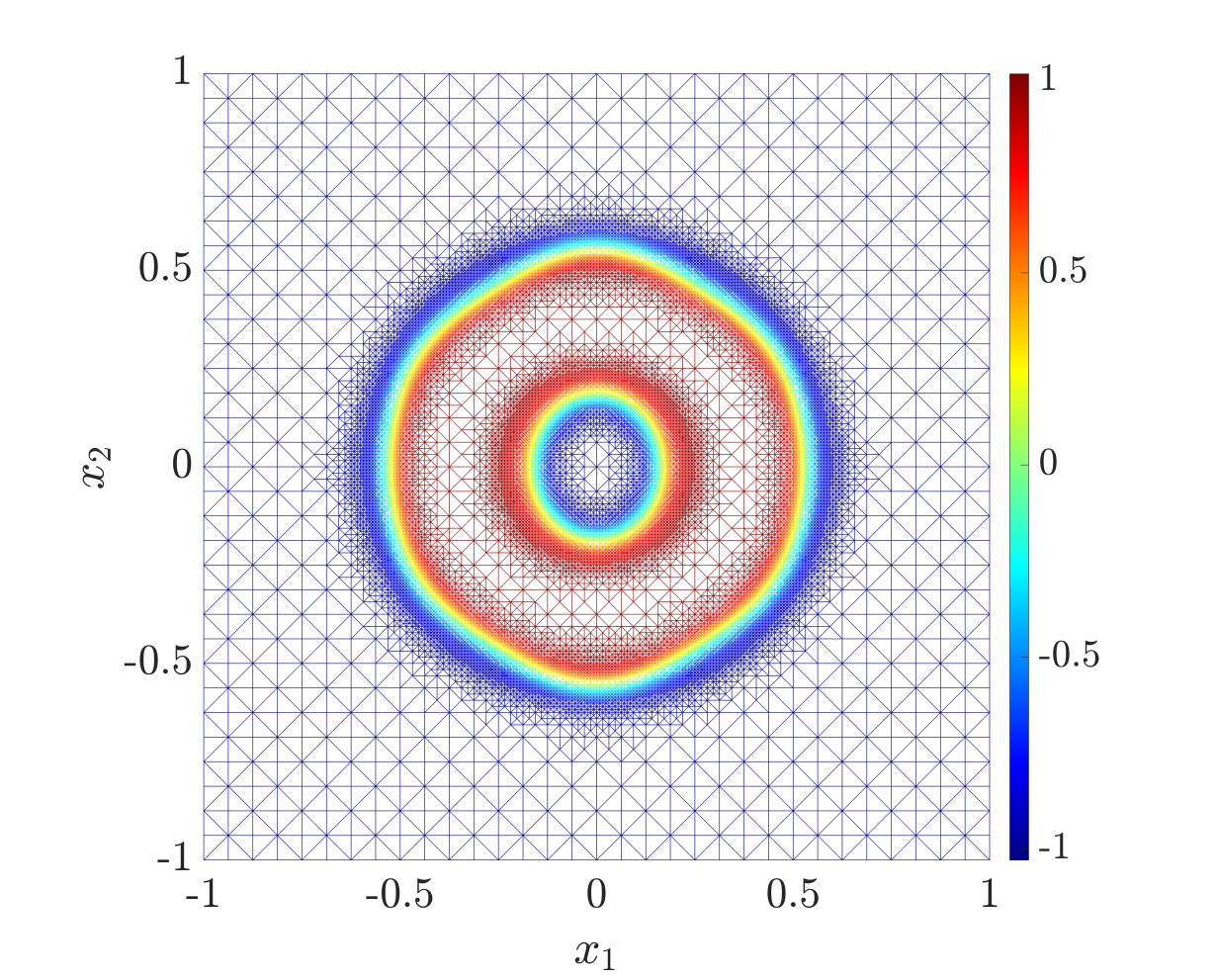}
\end{figure}
\end{minipage}\\
\begin{minipage}{0.48\textwidth}
\begin{figure}[H]
\centering
\includegraphics[scale=0.32, trim=45 0 50 20, clip]{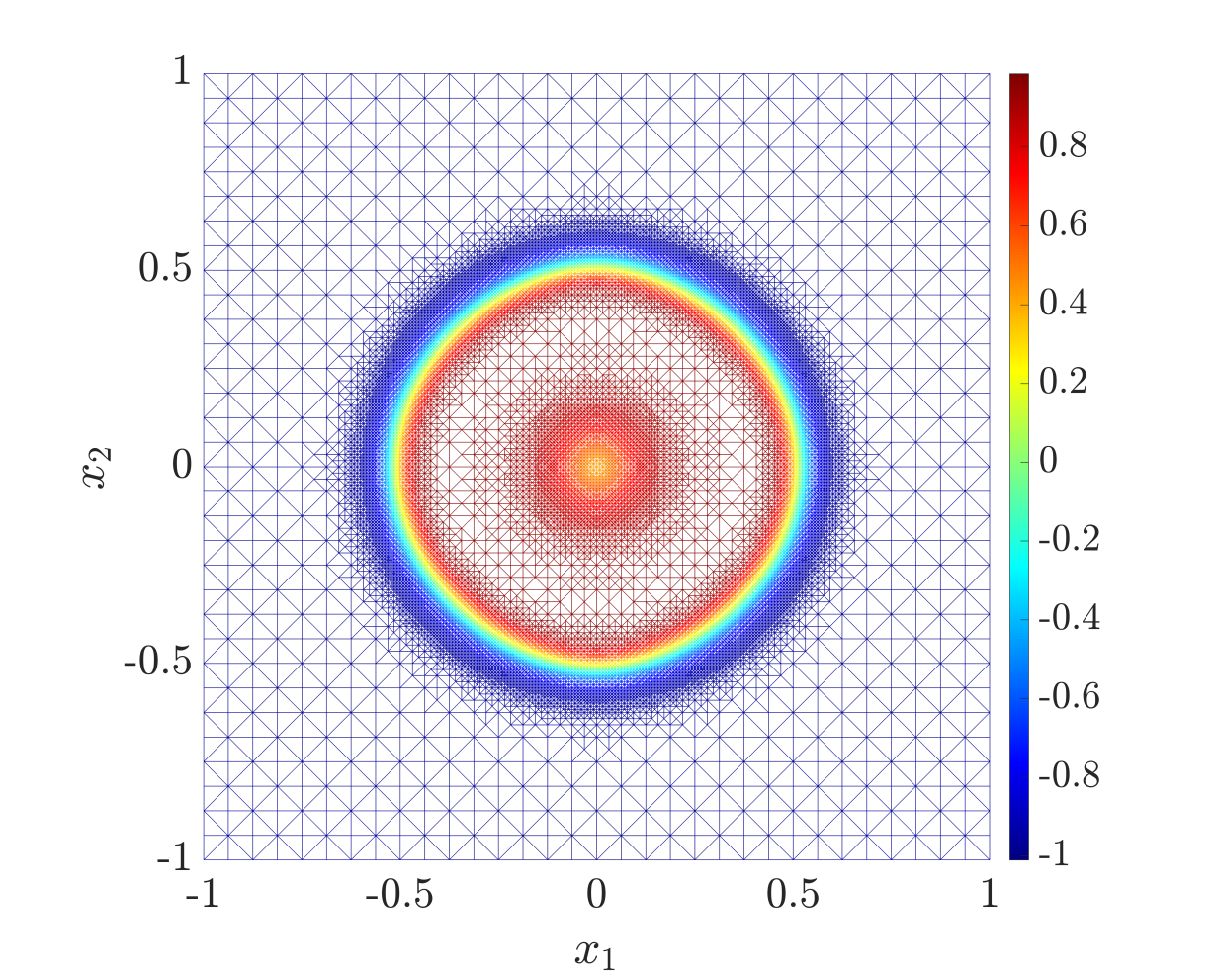}
\end{figure}
\end{minipage}
\hfill
\begin{minipage}{0.48\textwidth}
\begin{figure}[H]
\centering
\includegraphics[scale=0.32, trim=45 0 50 20, clip]{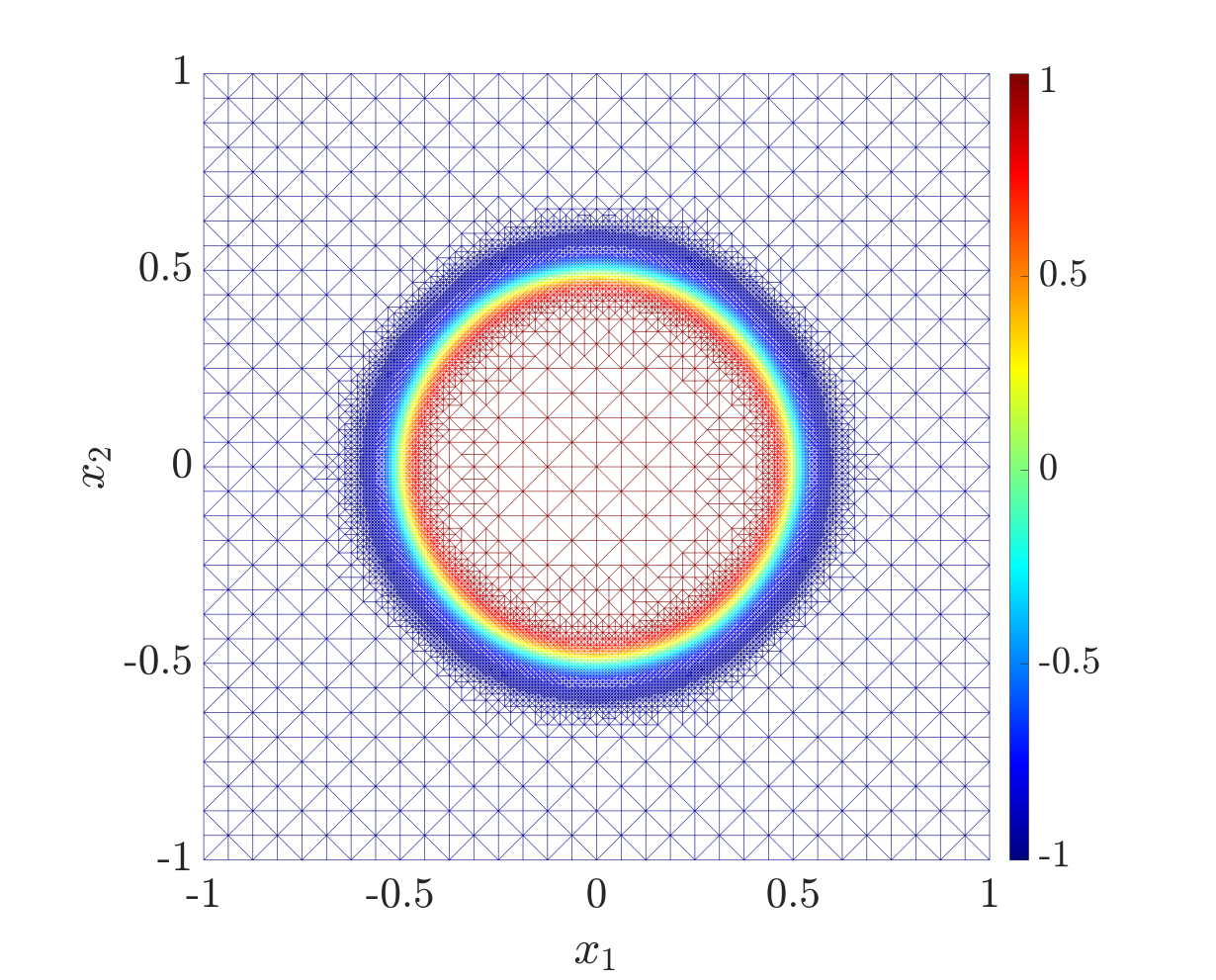}
\end{figure}
\end{minipage}
\caption{Snapshots of the mesh $\mathcal{T}^n_{h}$ (colored according to the values of the numerical 
solution $u_{h,\tau}$) at $t = 0, 0.003, 0.009, 0.012$ (from top left to bottom right).}
\label{Fig_CHstoch_uh}
\end{figure}
\end{center}
The evolution of the principal eigenvalue for two realizations of the noise in Figure~\ref{Fig_CHstoch_Lambda}
indicates that (apart from the oscillations) the overall evolution of the principal eigenvalue under the influence of the noise remains similar
to the deterministic case. 
The histogram for the occurrence of the topological change computed with $500$ realizations of the noise is displayed in Figure \ref{Fig_CHstoch_Hist}.
We observe that the occurrence of the topological change varies about its deterministic counterpart; the probability of the occurrence of topological change 
peaks close to the deterministic case. 
Furthermore, qualitatively the number of degrees of freedom evolves similarly as in the deterministic setting, see Figure~\ref{Fig_CHstoch_DOF1}.
\begin{center}
\begin{figure}[htp]
\begin{minipage}{0.5\textwidth}
\begin{figure}[H]
\centering
\includegraphics[scale=0.5, trim=0 0 0 20, clip]{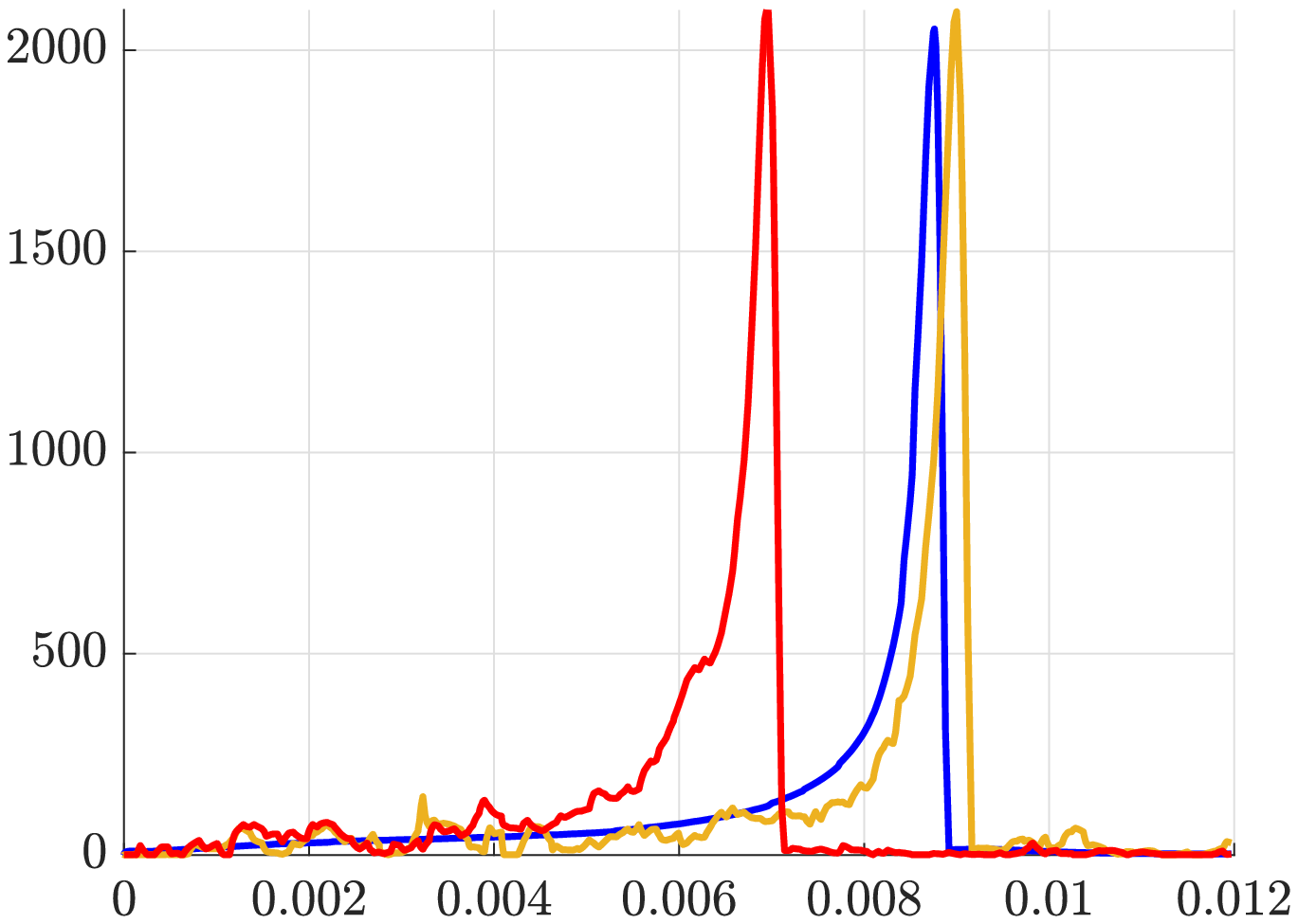}
\caption{Evolution of the principal eigenvalue: deterministic solution (blue) and two paths of the stochastic solution (red, orange).}
\label{Fig_CHstoch_Lambda}
\end{figure}
\end{minipage}\hfill
\begin{minipage}{0.5\textwidth}
\begin{figure}[H]
\centering
\includegraphics[scale=0.5, trim=0 0 0 15, clip]{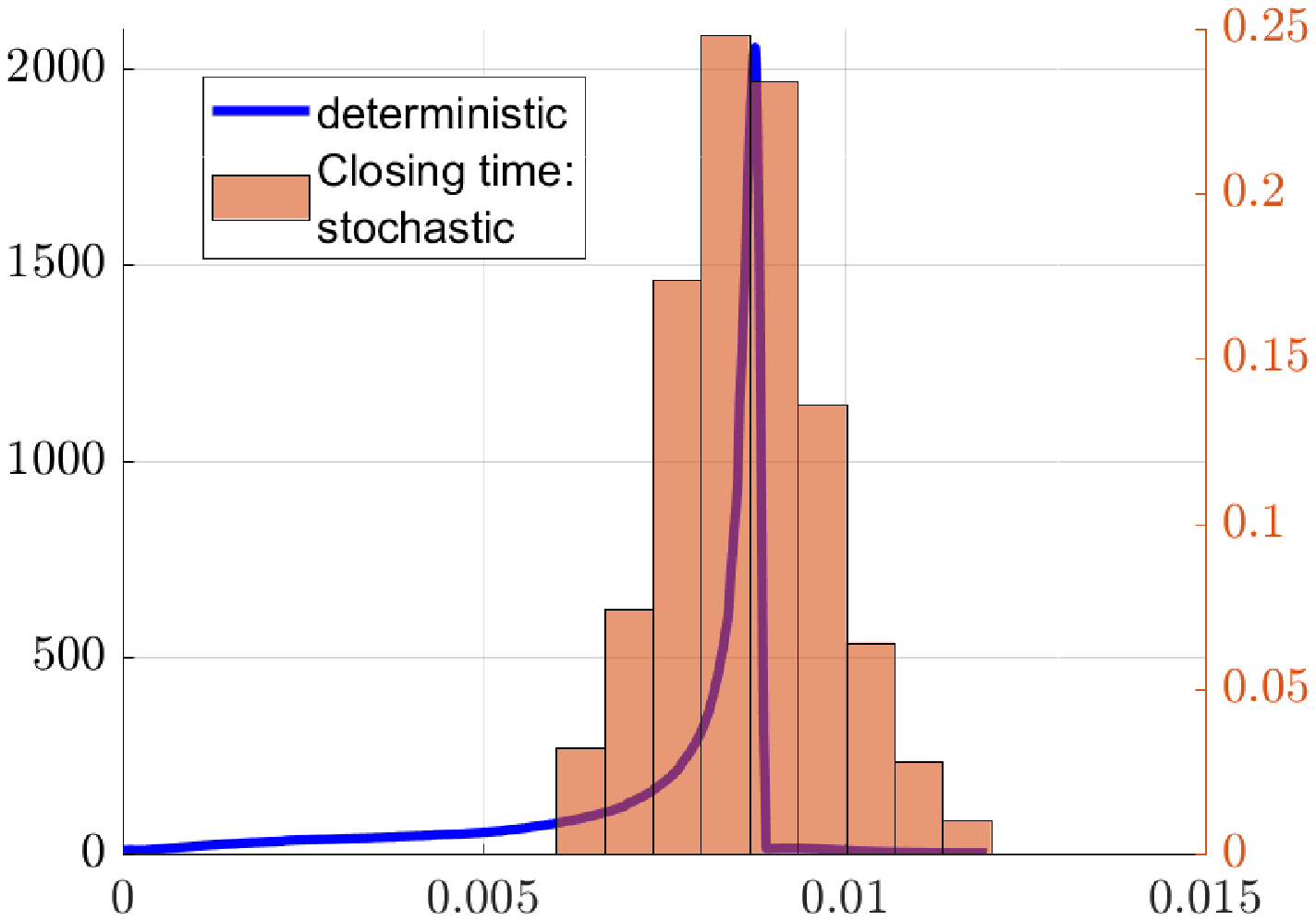}
\caption{Histogram of the occurrence of the (stochastic) closing time (scale on the right axis) 
and the evolution of the deterministic principal eigenvalue (blue line, scale on left axis).}
\label{Fig_CHstoch_Hist}
\end{figure}
\end{minipage}
\end{figure}
\end{center}

\begin{center}
\begin{figure}[htp]
\begin{minipage}{0.5\textwidth}
\begin{figure}[H]
\centering
\includegraphics[width=0.9\textwidth]{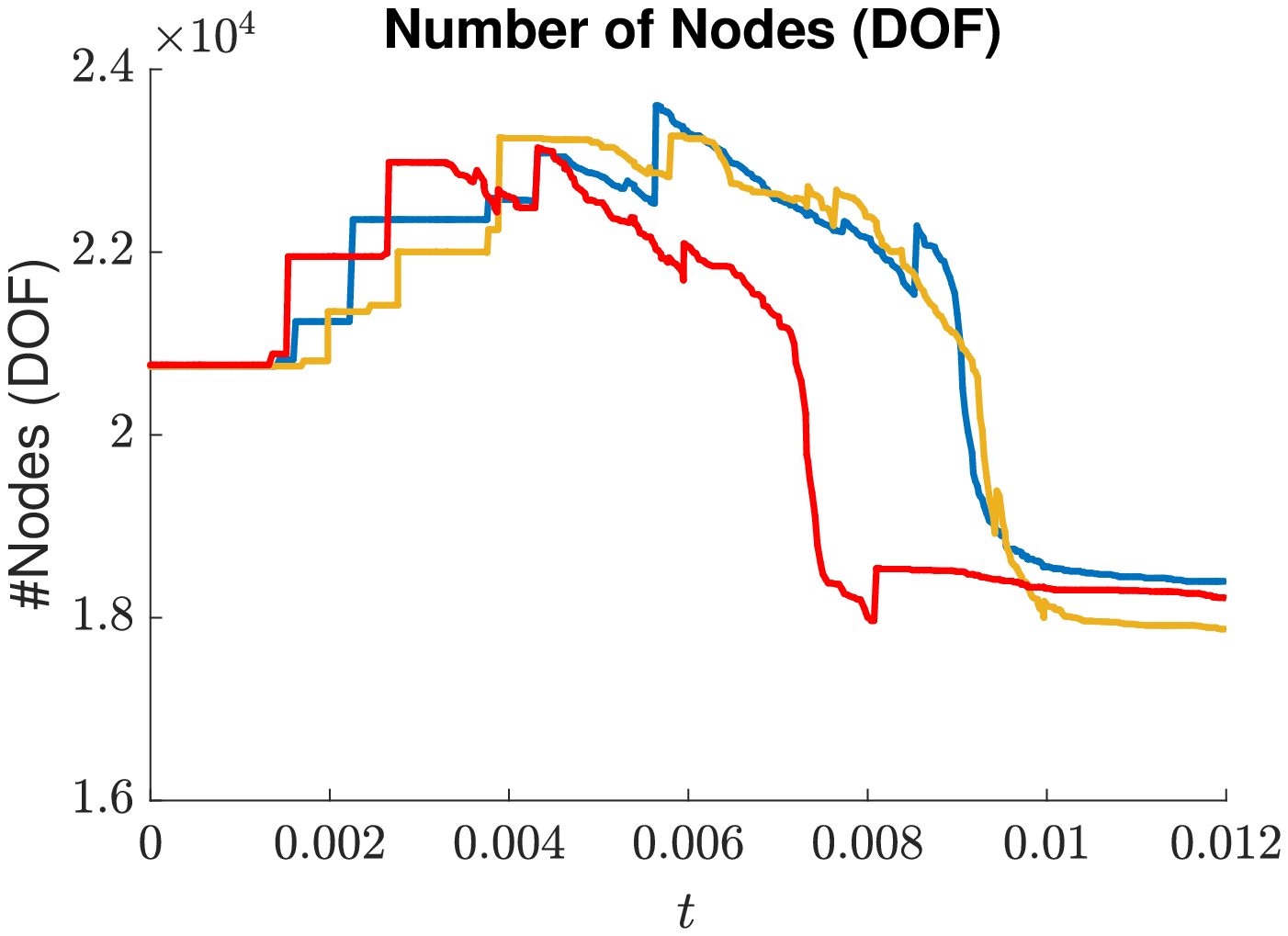}
\caption{Degrees of freedom for the approximation $u_{h,\tau}$, deterministic (blue), two paths of the stochastic solution in (red, orange).}
\label{Fig_CHstoch_DOF1}
\end{figure}
\end{minipage}\hfill
\begin{minipage}{0.5\textwidth}
\begin{figure}[H]
\centering
\includegraphics[width=0.9\textwidth]{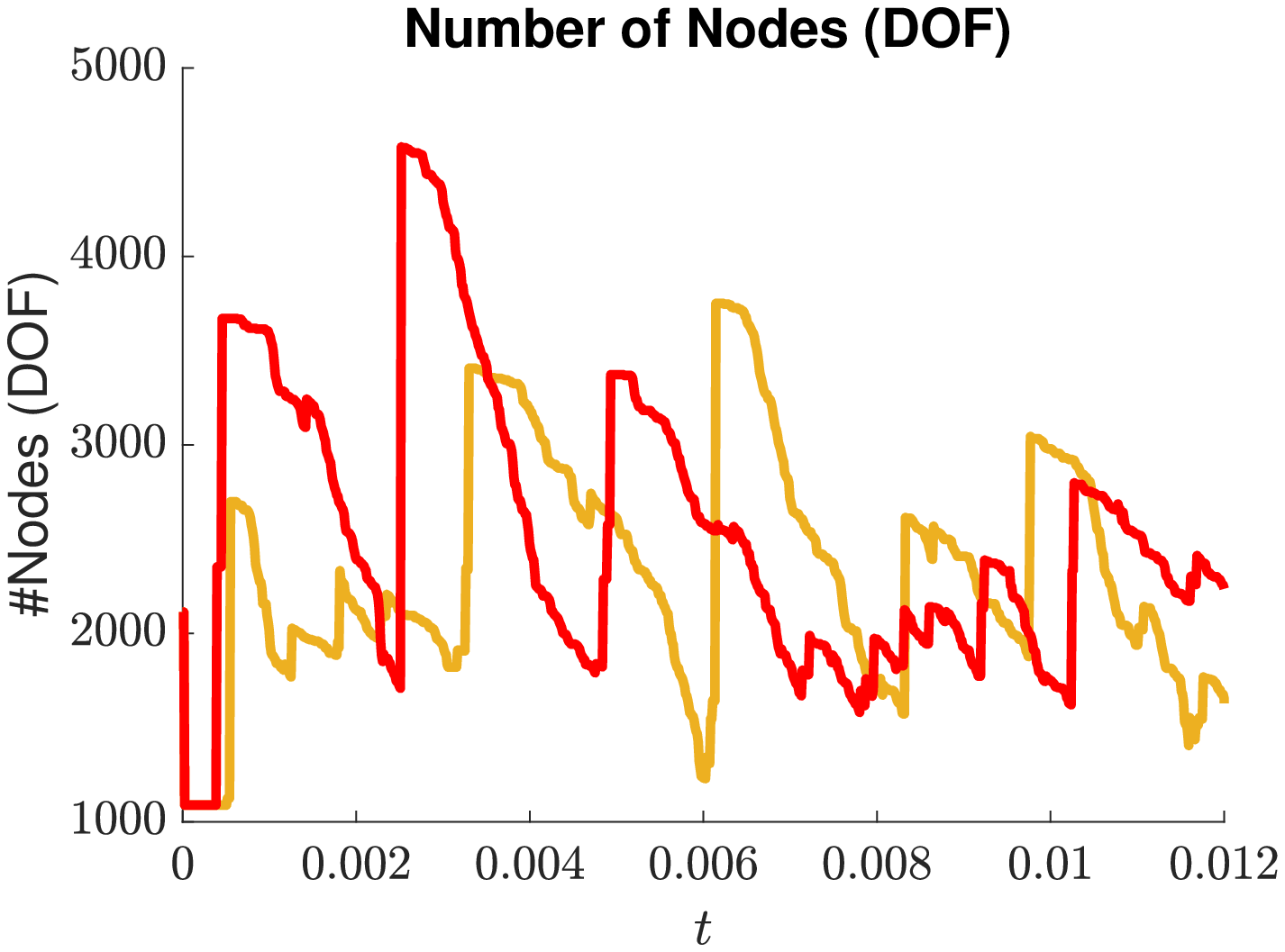}
\caption{Degrees off freedom for the approximation $\tilde{u}_{h,\tau}$.}
\label{Fig_CHstoch_DOF2}
\end{figure}
\end{minipage}
\end{figure}
\end{center}
In Figure~\ref{Fig_CHstoch_tildeuh} we show the time-evolution of the numerical approximation $\tilde{u}_{h,\tau}$ of the linear SPDE (\ref{CHstoch_lin}) for this scenario;  
the corresponding mesh is displayed in Figure~\ref{Fig_CHstoch_tildeuh_mesh} 
and in Figure~\ref{Fig_CHstoch_eta} we display the values of the corresponding spatial error indicator 
\corrl{$\eta_{\textrm{SPACE},3} \equiv \eta_{\textrm{SPACE},3}(\tilde{u}_{h,\tau})$} at different time levels.
The meshes were constructed by local mesh refinement and coarsening until the error indicator $\eta_{\textrm{SPACE},3}(\tilde{u}_{h,\tau})$
is below the tolerances $\mathrm{tol}=10^{-2}$;
the time evolution of the degrees of freedom for the approximation is displayed in Figure~\ref{Fig_CHstoch_tildeuh_mesh}.
We observe, that for the same tolerance, the numerical solution $\tilde{u}_{h,\tau}$ requires much less degrees of freedom than the numerical solution 
${u}_{h,\tau}$, see  Figures \ref{Fig_CHstoch_tildeuh_mesh} and \ref{Fig_CHstoch_DOF2}
and Figures \ref{Fig_CHstoch_uh} and \ref{Fig_CHstoch_DOF1} respectively.
\begin{center}\begin{figure}[htp]
\begin{minipage}{0.48\textwidth}
\begin{figure}[H]
\centering
\includegraphics[scale=0.48, trim=0 35 0 50, clip]{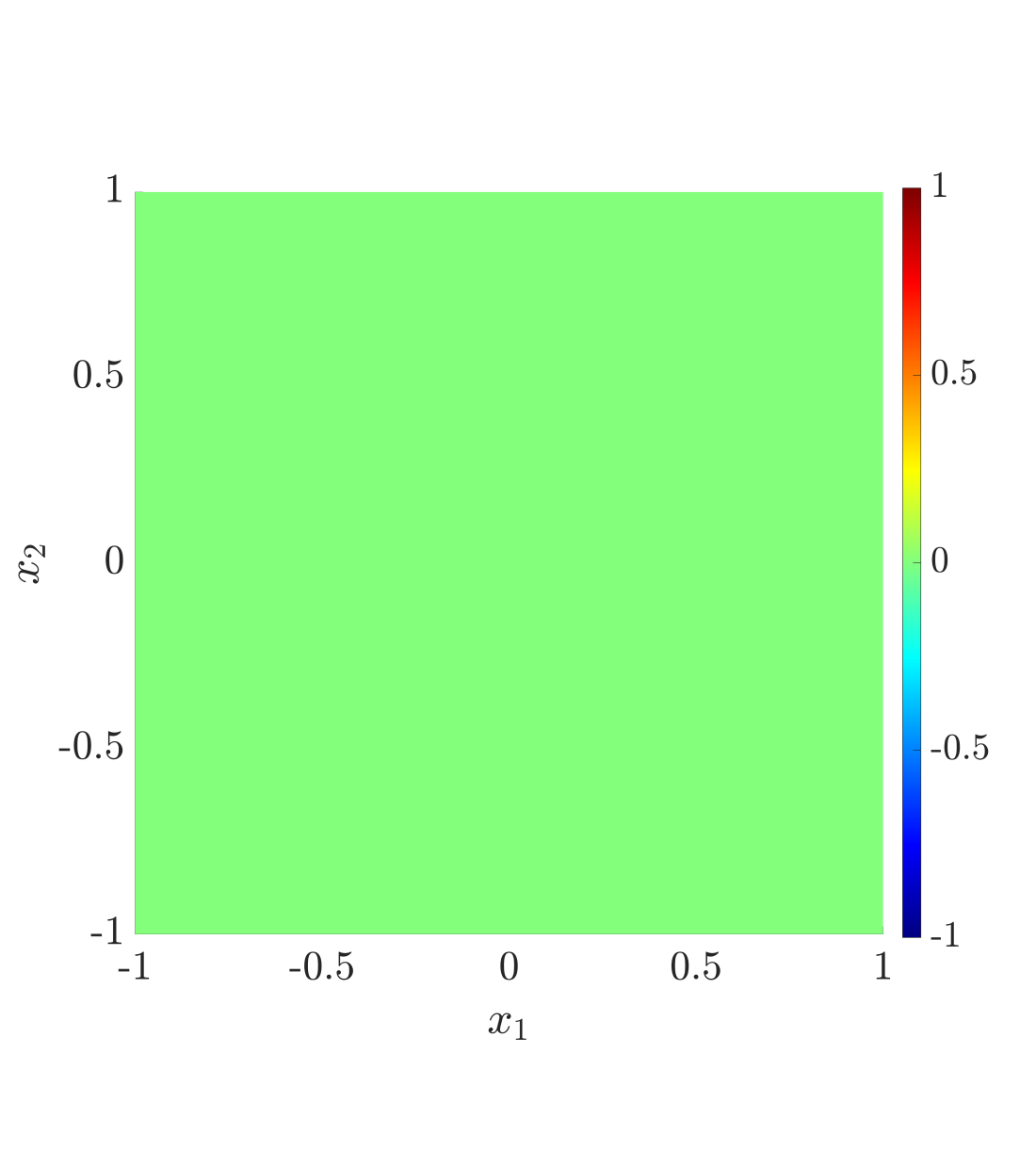}
\end{figure}
\end{minipage}
\hfill
\begin{minipage}{0.48\textwidth}
\begin{figure}[H]
\centering
\includegraphics[scale=0.48, trim=0 35 0 50, clip]{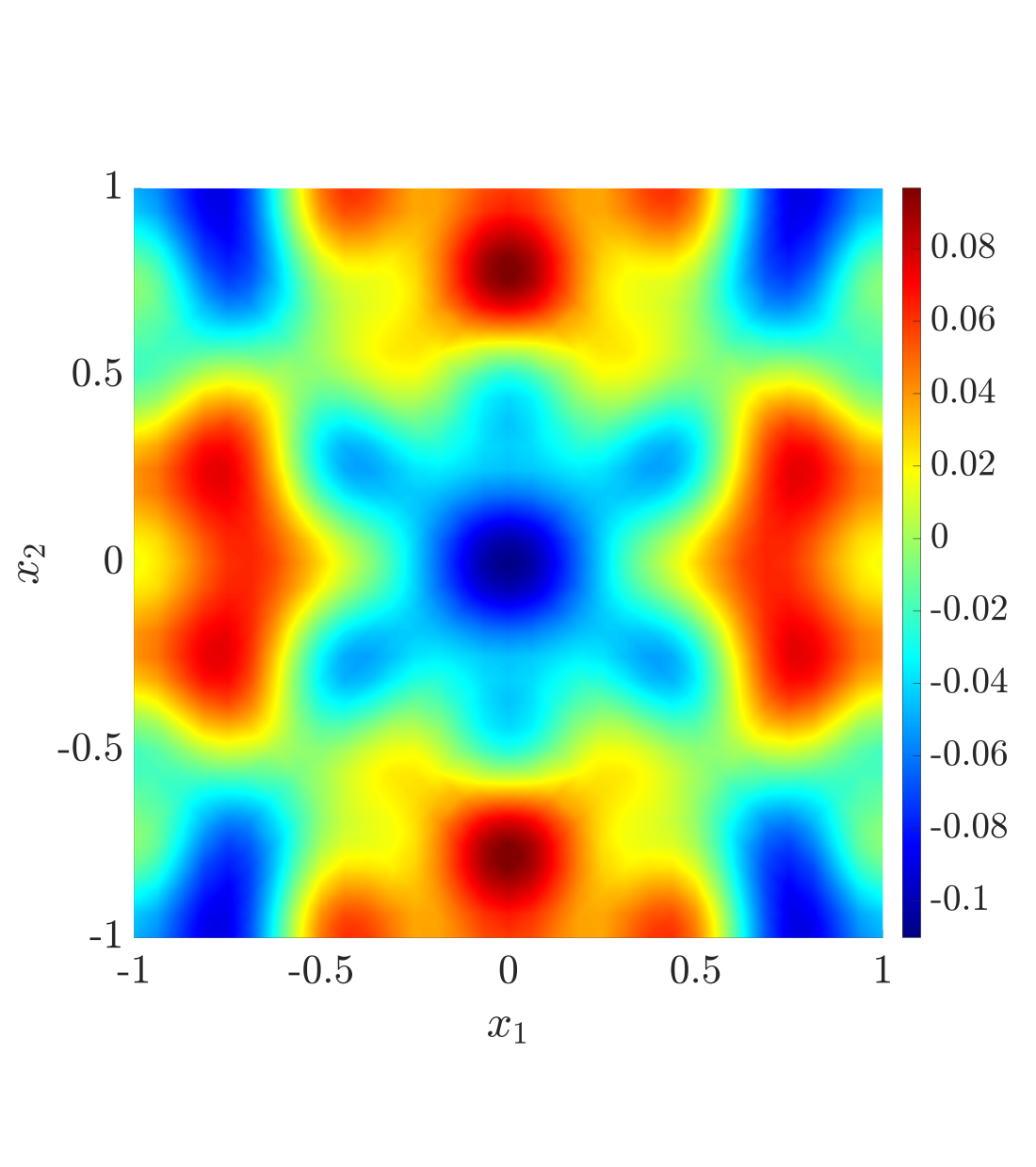}
\end{figure}
\end{minipage}
\begin{minipage}{0.48\textwidth}
\begin{figure}[H]
\centering
\includegraphics[scale=0.48, trim=0 35 0 50, clip]{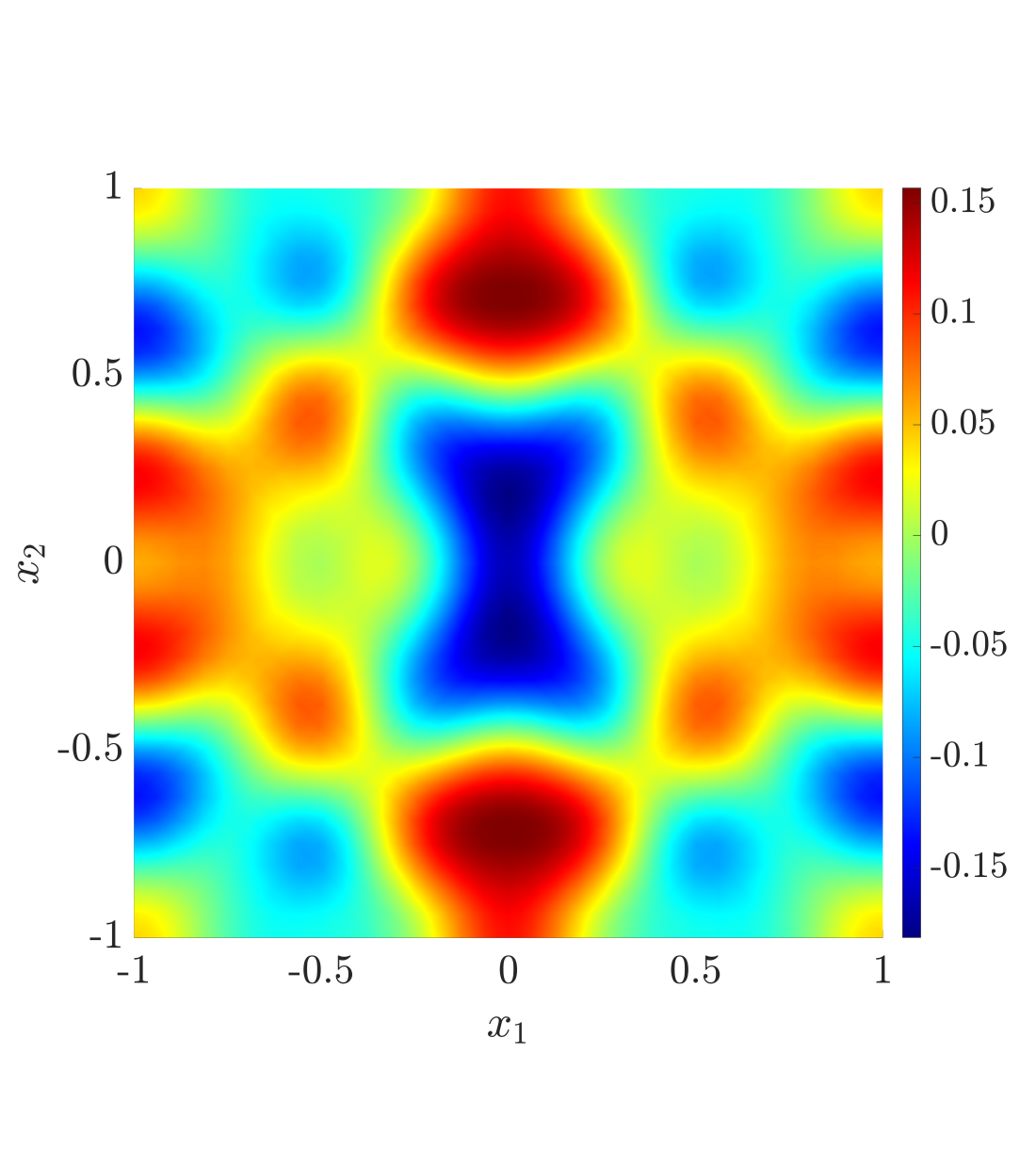}
\end{figure}
\end{minipage}
\hfill
\begin{minipage}{0.48\textwidth}
\begin{figure}[H]
\centering
\includegraphics[scale=0.48, trim=0 35 0 50, clip]{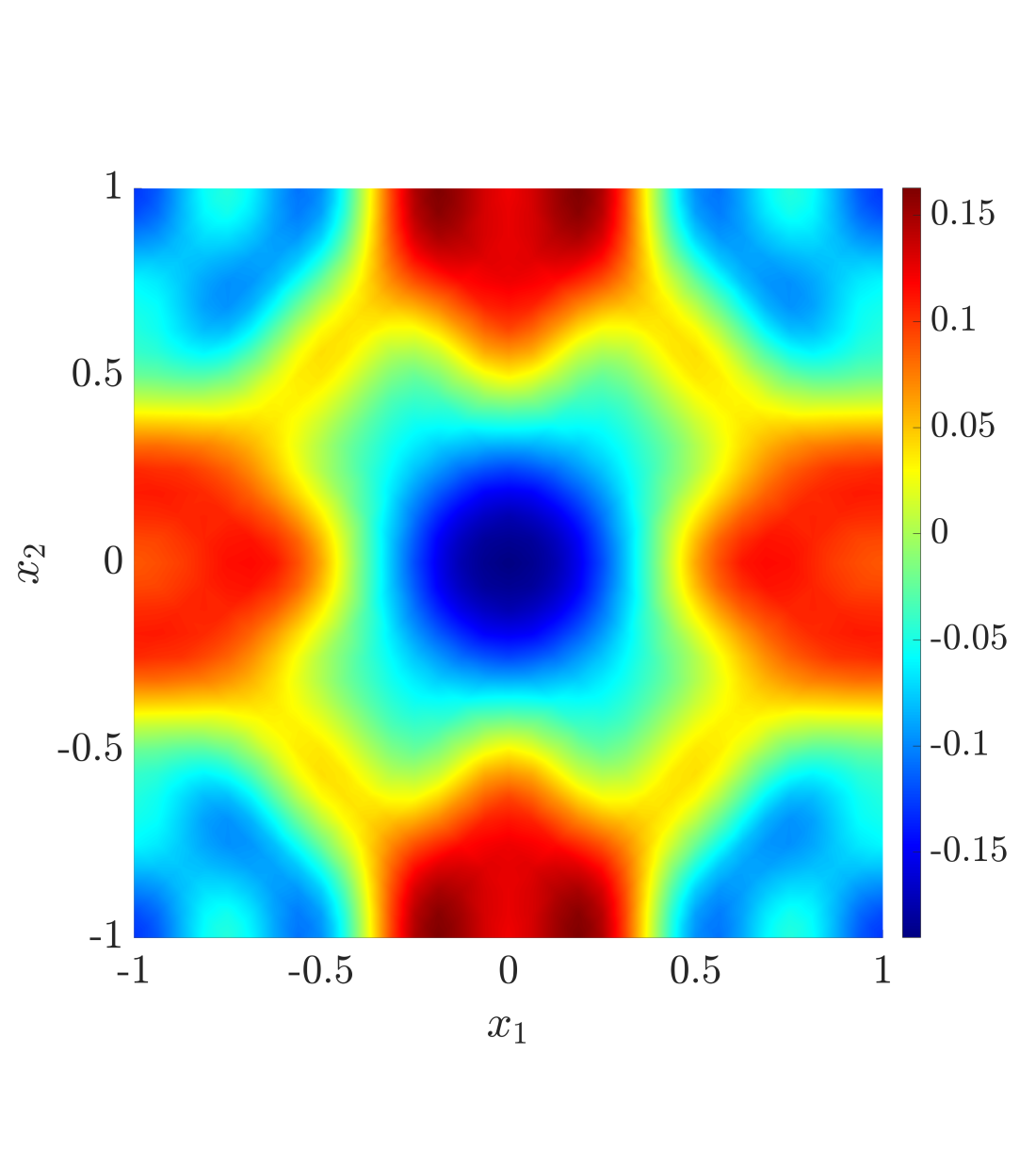}
\end{figure}
\end{minipage}
\caption{Snapshots of the numerical solution $\tilde{u}_{h,\tau}$  at $t = 0, 0.003, 0.009, 0.012$.}
\label{Fig_CHstoch_tildeuh}
\end{figure}
\end{center}
\begin{center}\begin{figure}[htp]
\begin{minipage}{0.48\textwidth}
\begin{figure}[H]
\centering
\includegraphics[scale=0.48, trim=0 35 0 50, clip]{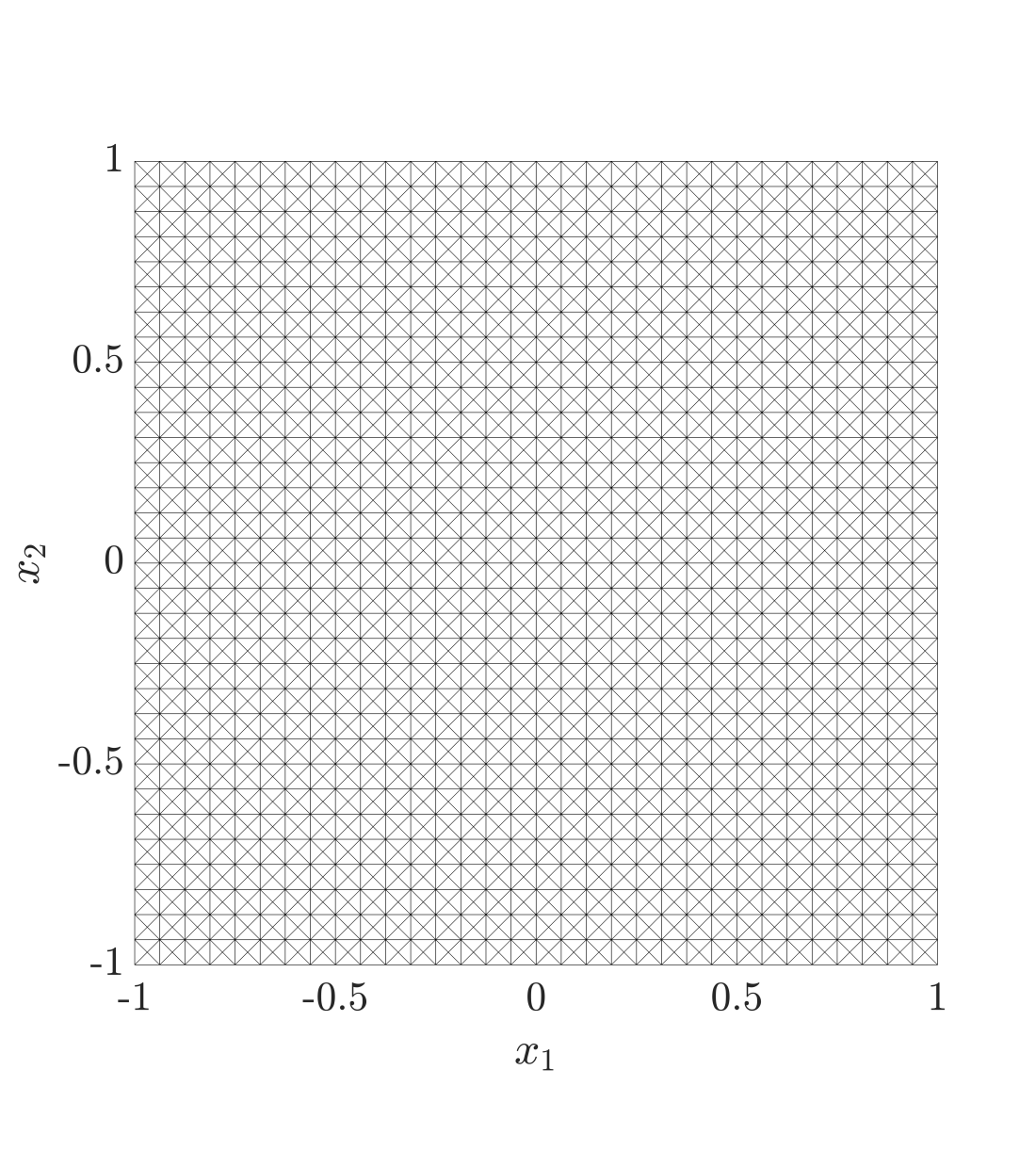}
\end{figure}
\end{minipage}
\hfill
\begin{minipage}{0.48\textwidth}
\begin{figure}[H]
\centering
\includegraphics[scale=0.48, trim=0 35 0 50, clip]{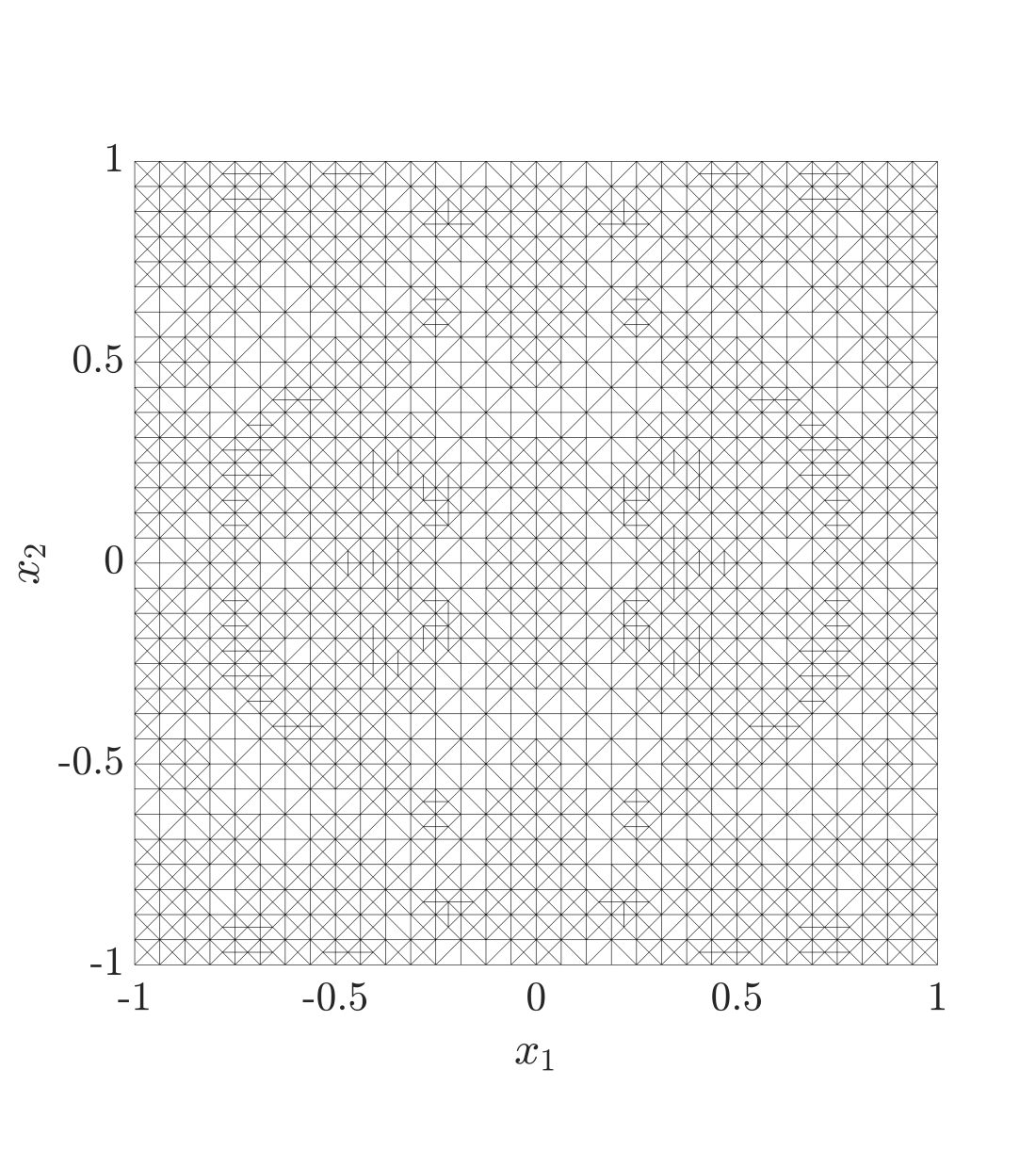}
\end{figure}
\end{minipage}\\
\begin{minipage}{0.48\textwidth}
\begin{figure}[H]
\centering
\includegraphics[scale=0.48, trim=0 35 0 50, clip]{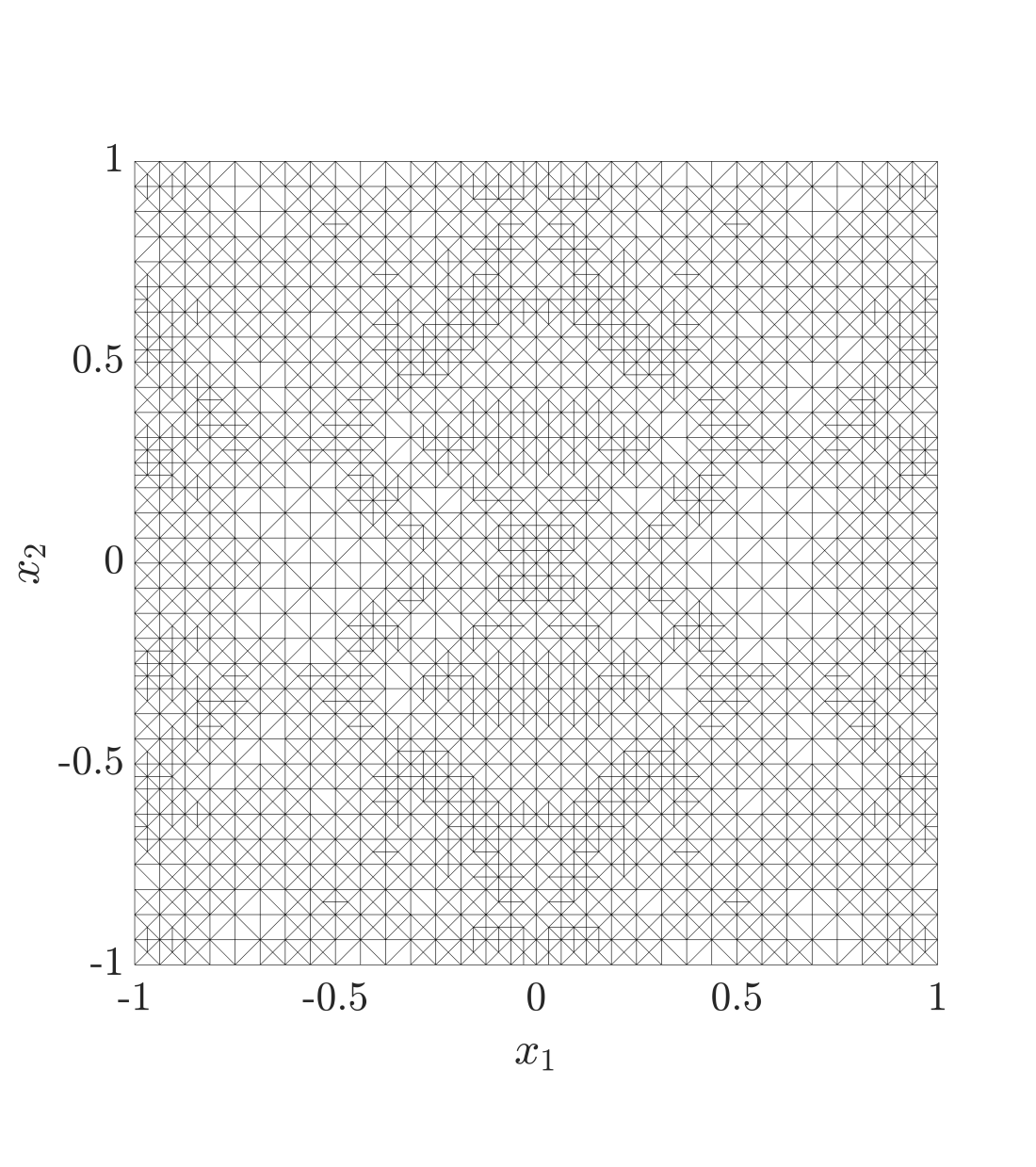}
\end{figure}
\end{minipage}
\hfill
\begin{minipage}{0.48\textwidth}
\begin{figure}[H]
\centering
\includegraphics[scale=0.48, trim=0 35 0 50, clip]{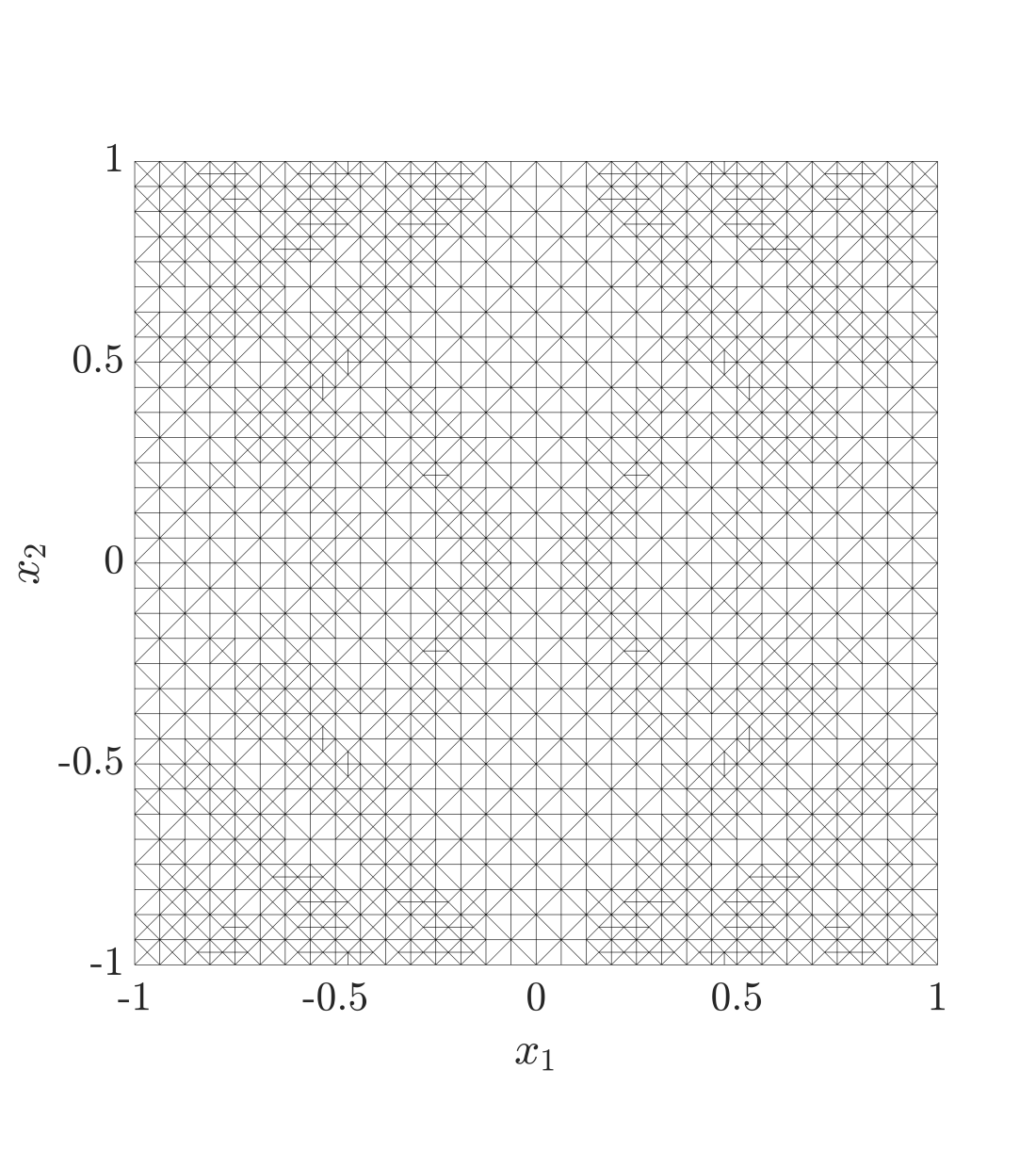}
\end{figure}
\end{minipage}
\caption{Snapshots of the mesh for the numerical solution $\tilde{u}_{h,\tau}$ at $t = 0, 0.003, 0.009, 0.012$.}
\label{Fig_CHstoch_tildeuh_mesh}
\end{figure}
\end{center}
\begin{center}\begin{figure}[htp]
\begin{minipage}{0.32\textwidth}
\begin{figure}[H]
\centering
\includegraphics[scale=0.15, trim=40 50 0 20, clip]{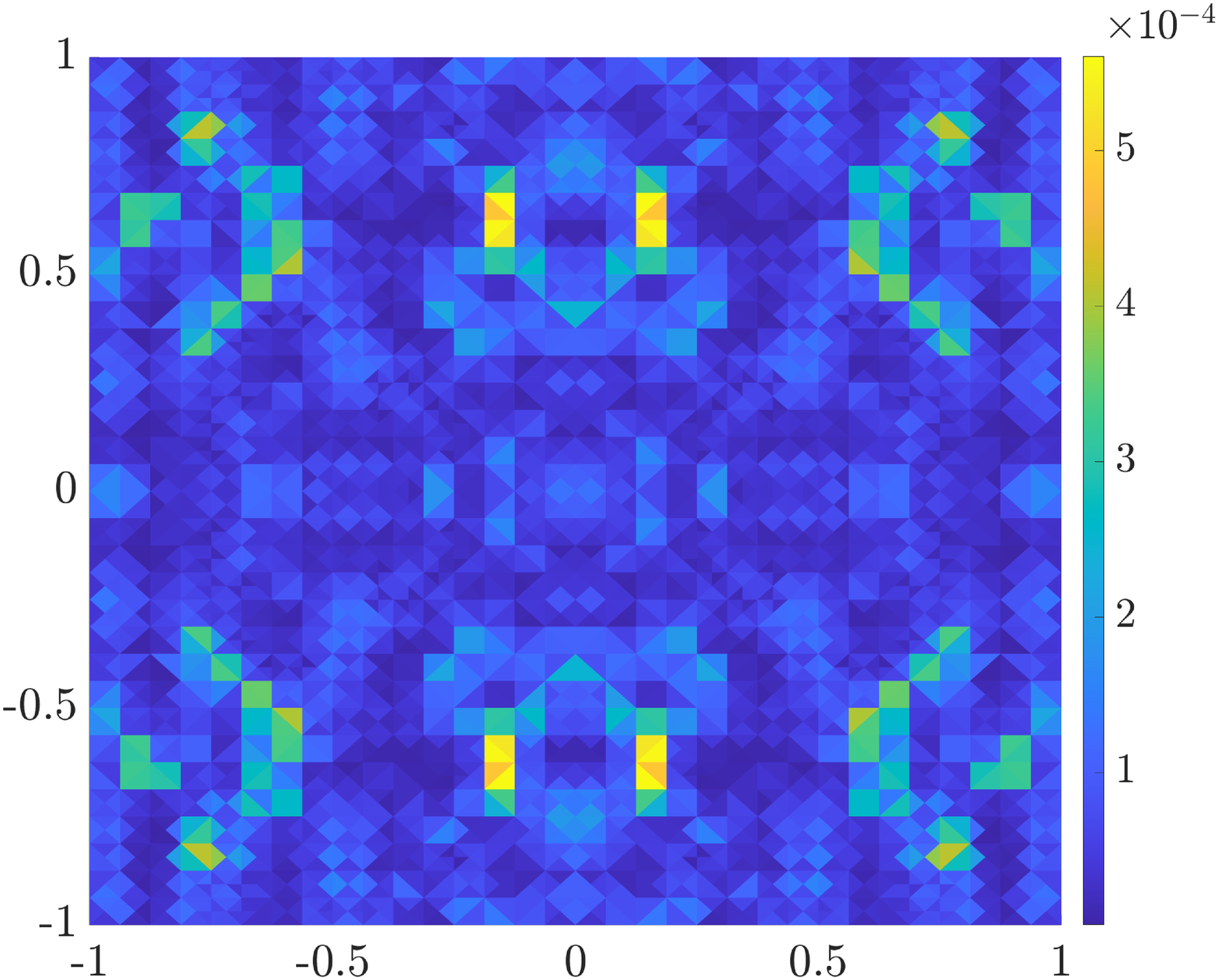}
\end{figure}
\end{minipage}
\hfill
\begin{minipage}{0.32\textwidth}
\begin{figure}[H]
\centering
\includegraphics[scale=0.15, trim=40 50 0 20, clip]{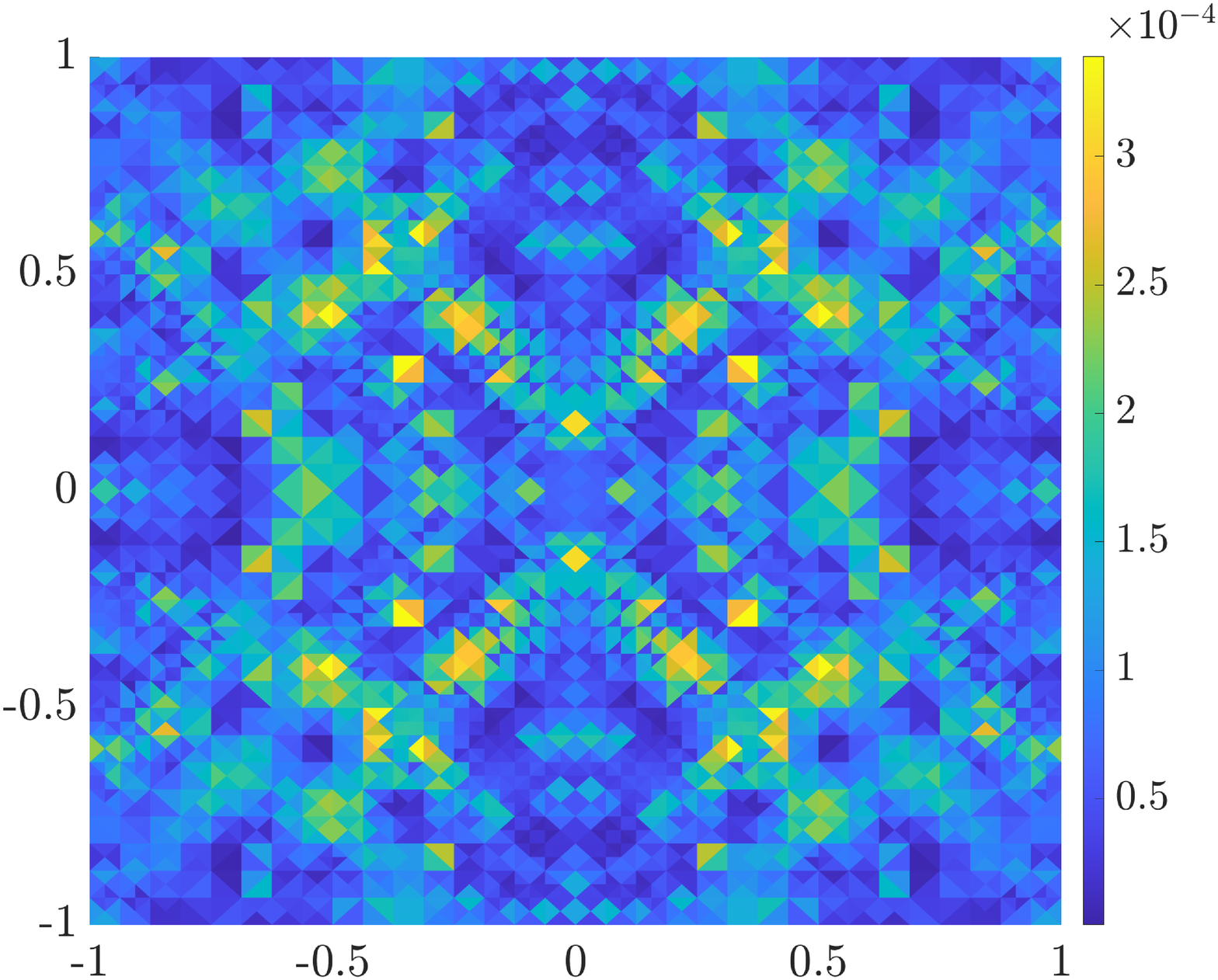}
\end{figure}
\end{minipage}
\hfill
\begin{minipage}{0.32\textwidth}
\begin{figure}[H]
\centering
\includegraphics[scale=0.15, trim=40 50 0 20, clip]{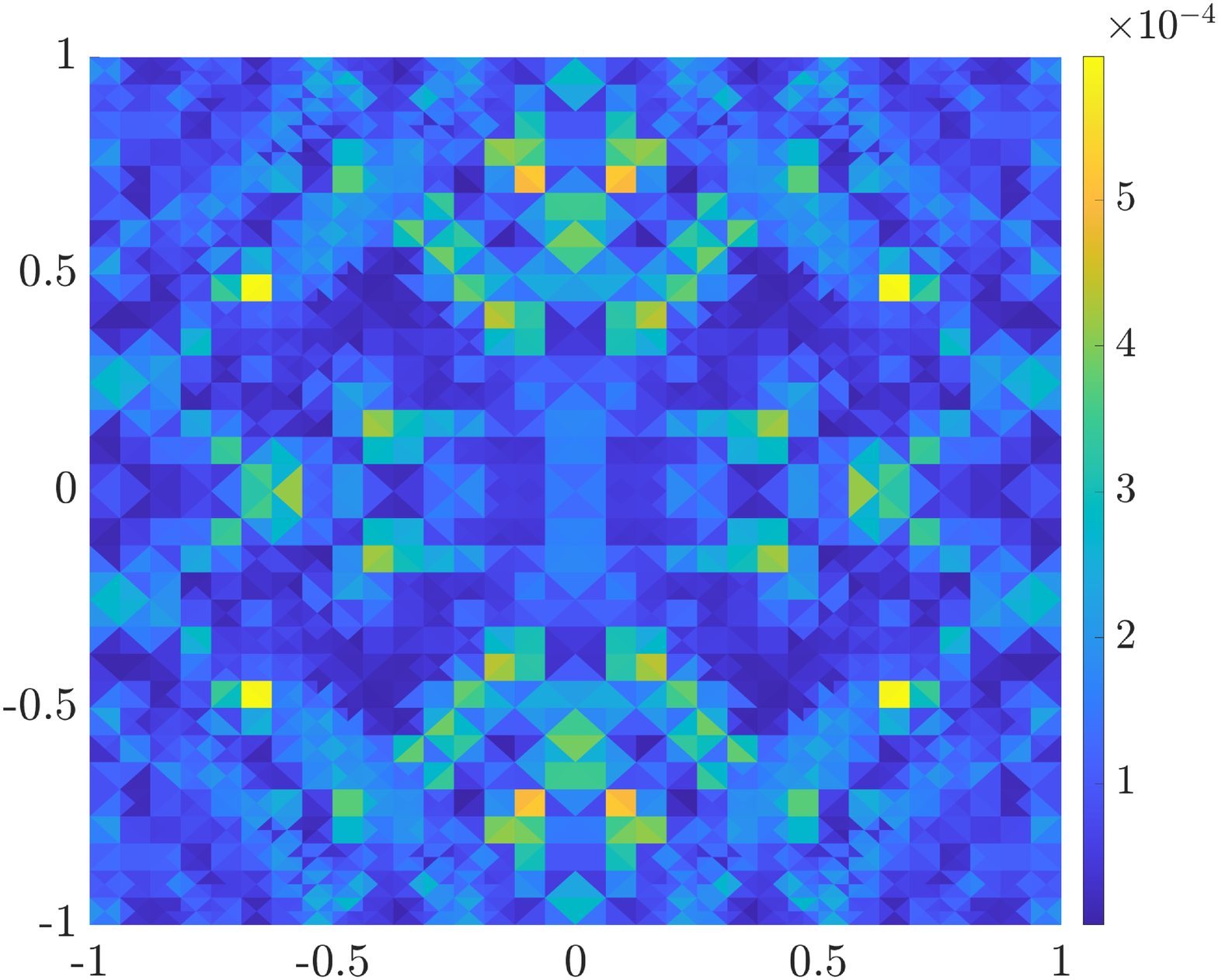}
\end{figure}
\end{minipage}
\caption{Spacial-error estimator for $\tilde{u}_{h,\tau}$ at $t = 0.003, 0.009, 0.012$.}
\label{Fig_CHstoch_eta}
\end{figure}
\end{center}


To examine the effects of stronger noise we consider the above initial condition with $r_1 = 0.3$, $r_2 = 0.45$
and compute the problem with the noise intensity $\sigma=5$.
We observe that the stochastic problem Figure \ref{Fig_CHstoch_detstoch_2_mesh} (right) exhibits a completely different evolution than its deterministic counterpart in Figure \ref{Fig_CHstoch_detstoch_2_mesh} (left); 
nevertheless the adaptive mesh refinement algorithm correctly captures the position of the interface in both cases.
In Figure \ref{Fig_CHstoch_2_Lambda} we display the evolution
of the principal eigenvalue of the numerical solution  and indicate the peaks of its value for the stochastic case by dotted vertical lines. 
By examining the numerical solution in Figure~\ref{Fig_CHstoch_stoch_2_peak_mesh} which corresponds to
the peaks of the principal eigenvalue in Figure~\ref{Fig_CHstoch_2_Lambda},
one may deduce that the peaks occur at times where the interface undergoes (or is close to) a topological change.
This is in line with the estimate in  Lemma~\ref{Lemma_CHest_echeck} which indicates that the largest contributions to the approximation error happen at the peaks of the principal eigenvalue (i.e.
when the solution undergoes a topological change). 
Furthermore, the numerical experiments support the conjecture (that goes beyond the known theory, cf. \cite{Banas19}, \cite{BanasZhu2019})
that the principal eigenvalue is a reliable indicator of topological changes of the interface even in the presence of strong noise.


\begin{center}\begin{figure}[htp]
\begin{minipage}{0.48\textwidth}
\begin{figure}[H]
\centering
\includegraphics[scale=0.35, trim=0 35 0 50, clip]{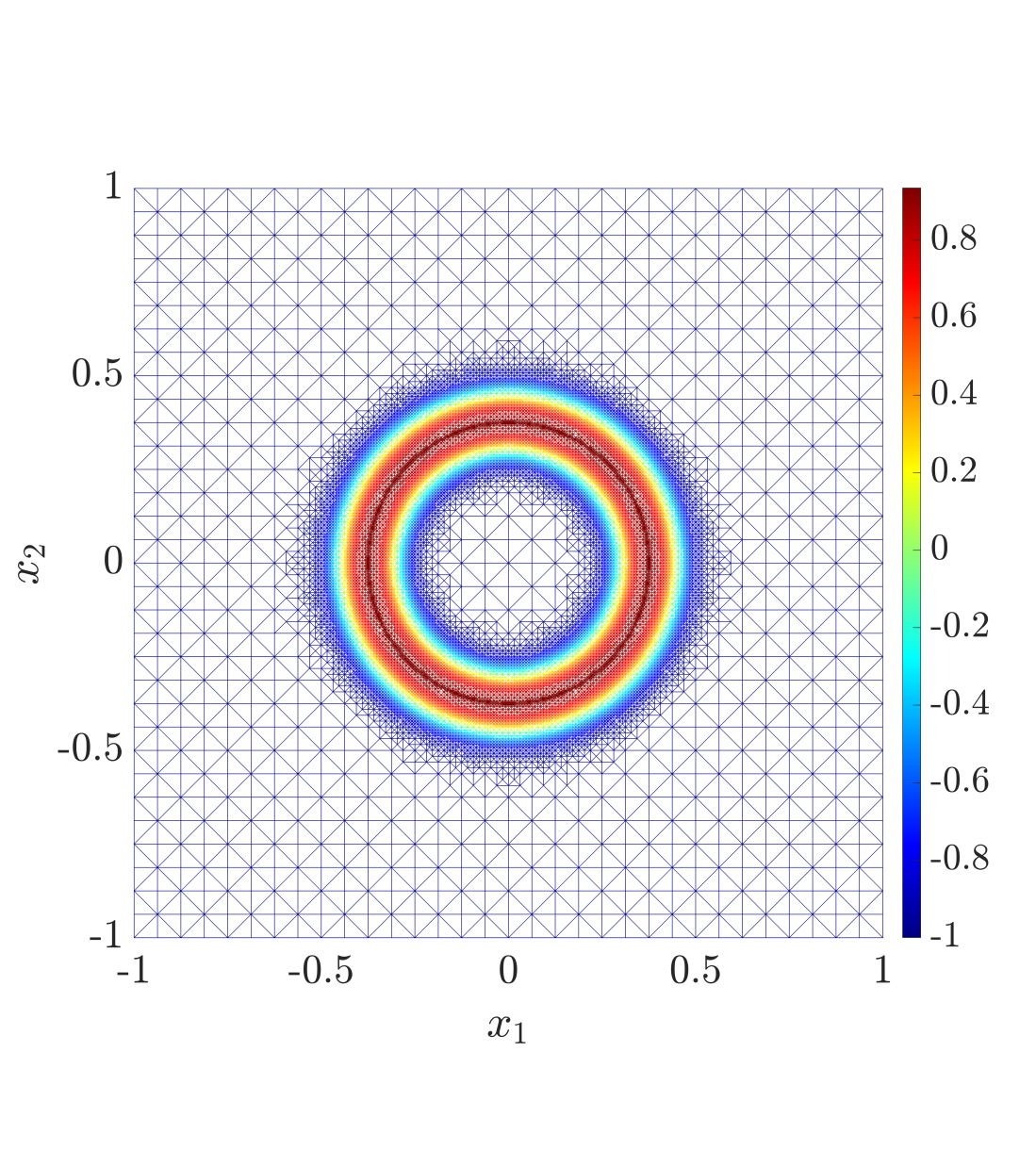}
\end{figure}
\end{minipage}
\hfill
\begin{minipage}{0.48\textwidth}
\begin{figure}[H]
\centering
\includegraphics[scale=0.35, trim=0 35 0 50, clip]{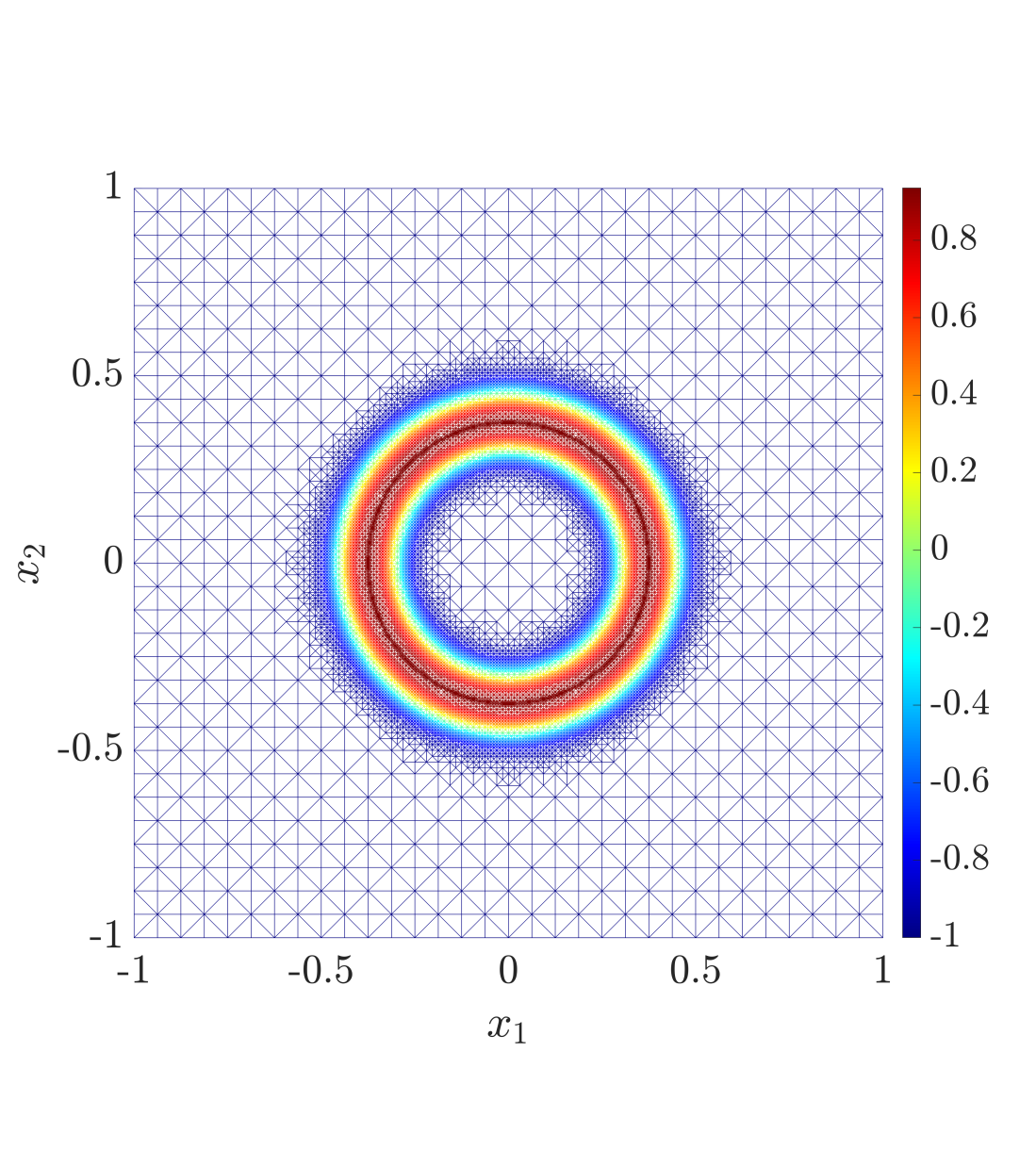}
\end{figure}
\end{minipage}\\
\begin{minipage}{0.48\textwidth}
\begin{figure}[H]
\centering
\includegraphics[scale=0.35, trim=0 35 0 50, clip]{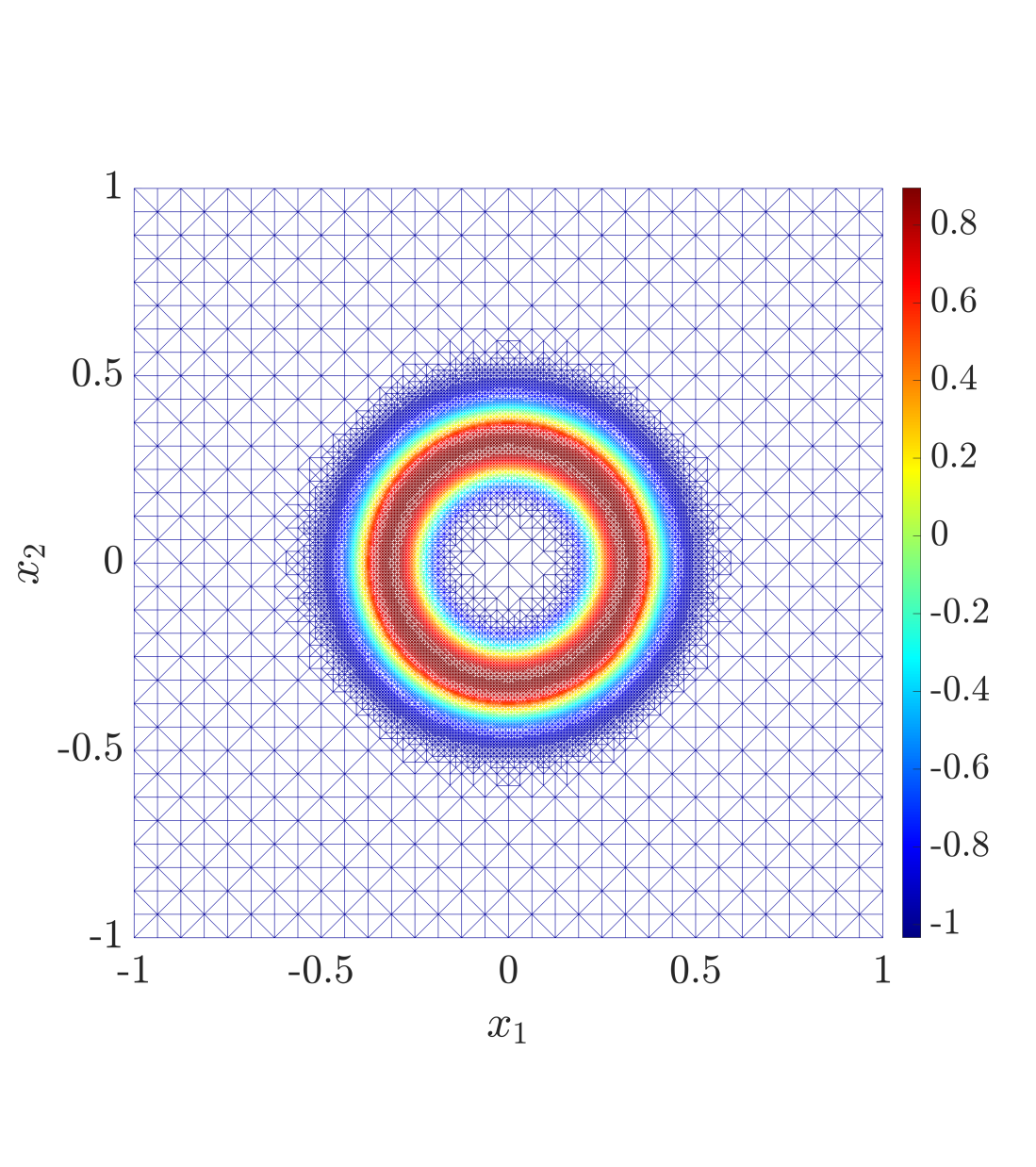}
\end{figure}
\end{minipage}
\hfill
\begin{minipage}{0.48\textwidth}
\begin{figure}[H]
\centering
\includegraphics[scale=0.35, trim=0 35 0 50, clip]{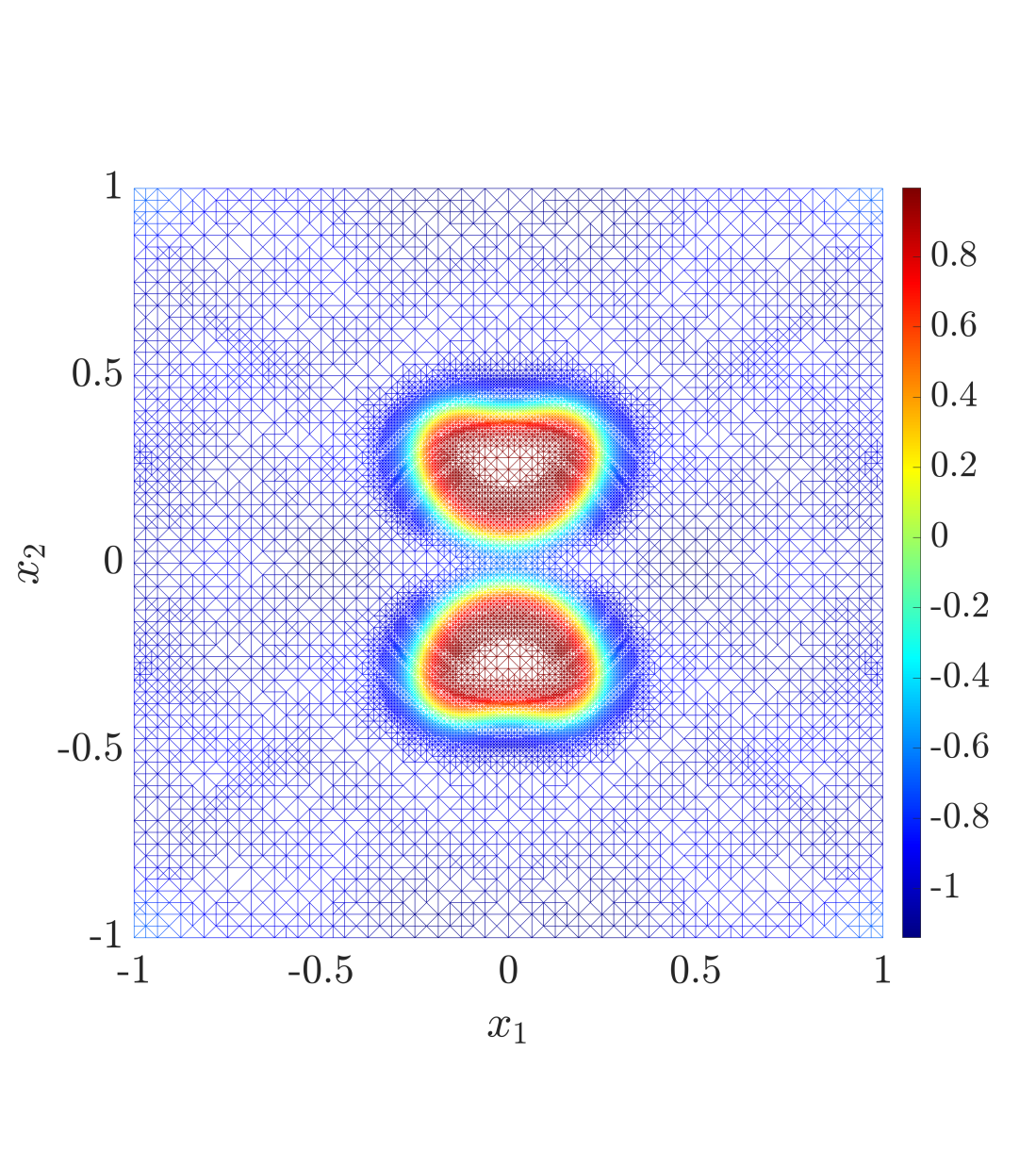}
\end{figure}
\end{minipage}\\
\begin{minipage}{0.48\textwidth}
\begin{figure}[H]
\centering
\includegraphics[scale=0.35, trim=0 35 0 50, clip]{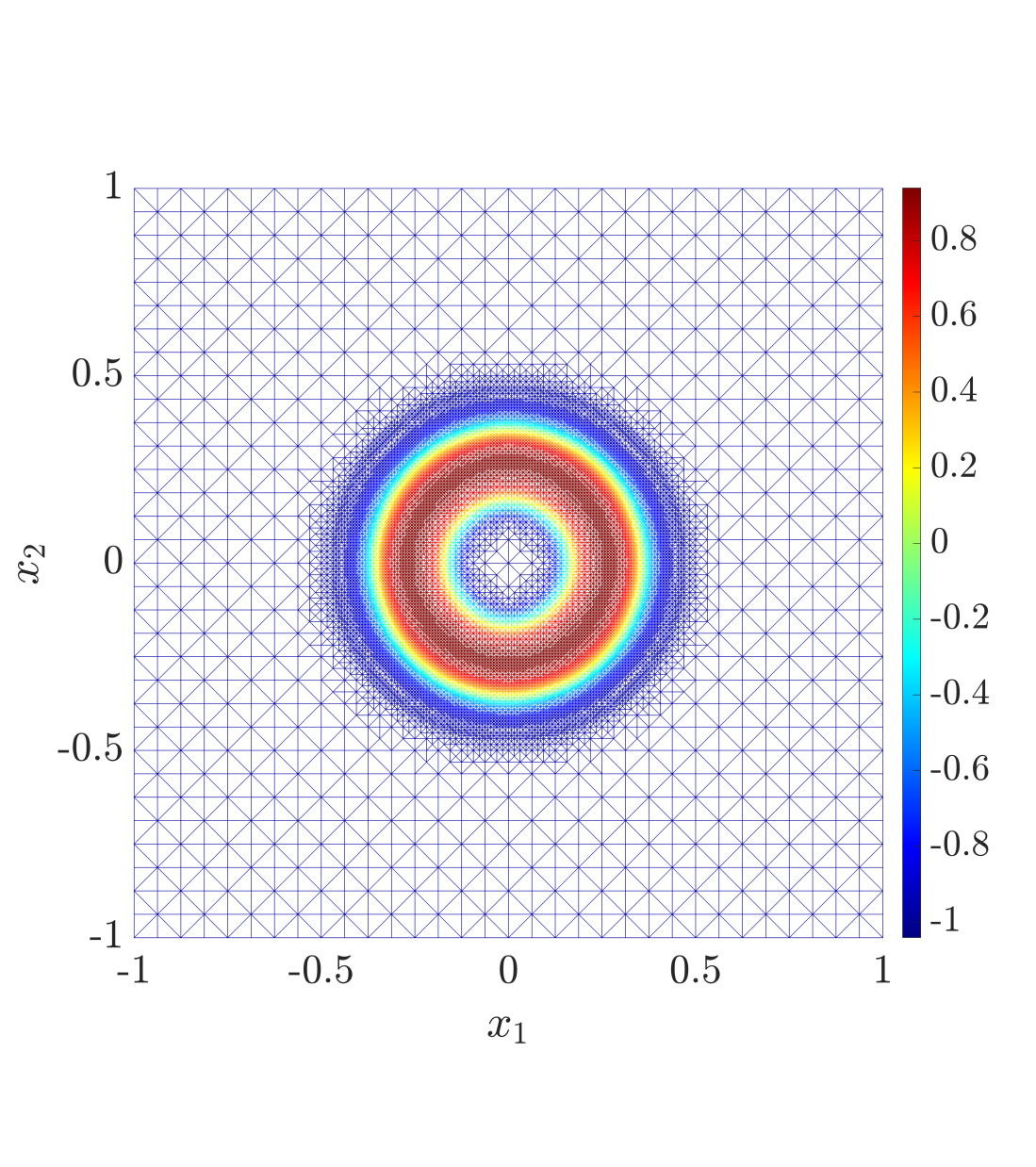}
\end{figure}
\end{minipage}
\hfill
\begin{minipage}{0.48\textwidth}
\begin{figure}[H]
\centering
\includegraphics[scale=0.35, trim=0 35 0 50, clip]{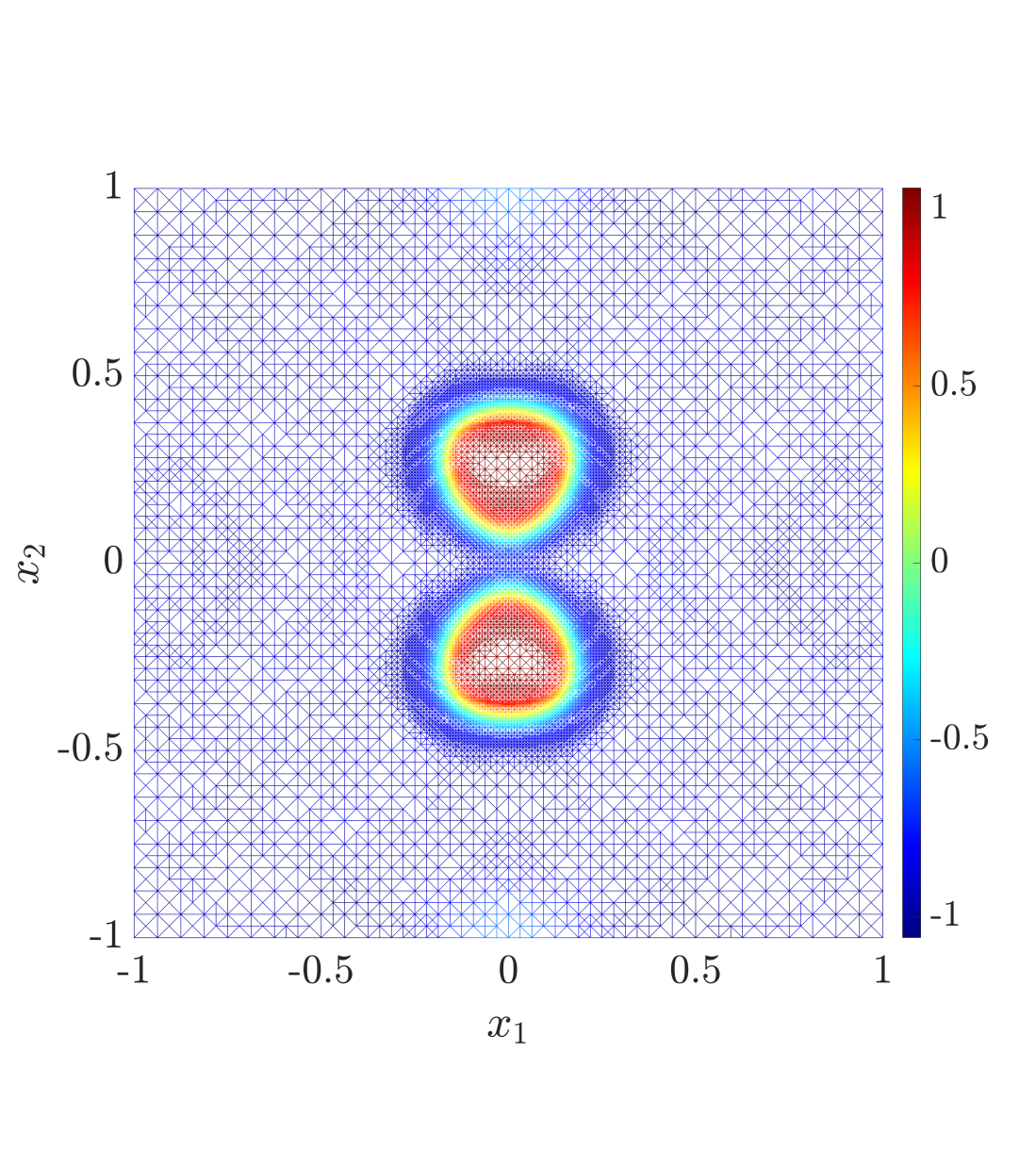}
\end{figure}
\end{minipage}\\
\begin{minipage}{0.48\textwidth}
\begin{figure}[H]
\centering
\includegraphics[scale=0.35, trim=0 35 0 50, clip]{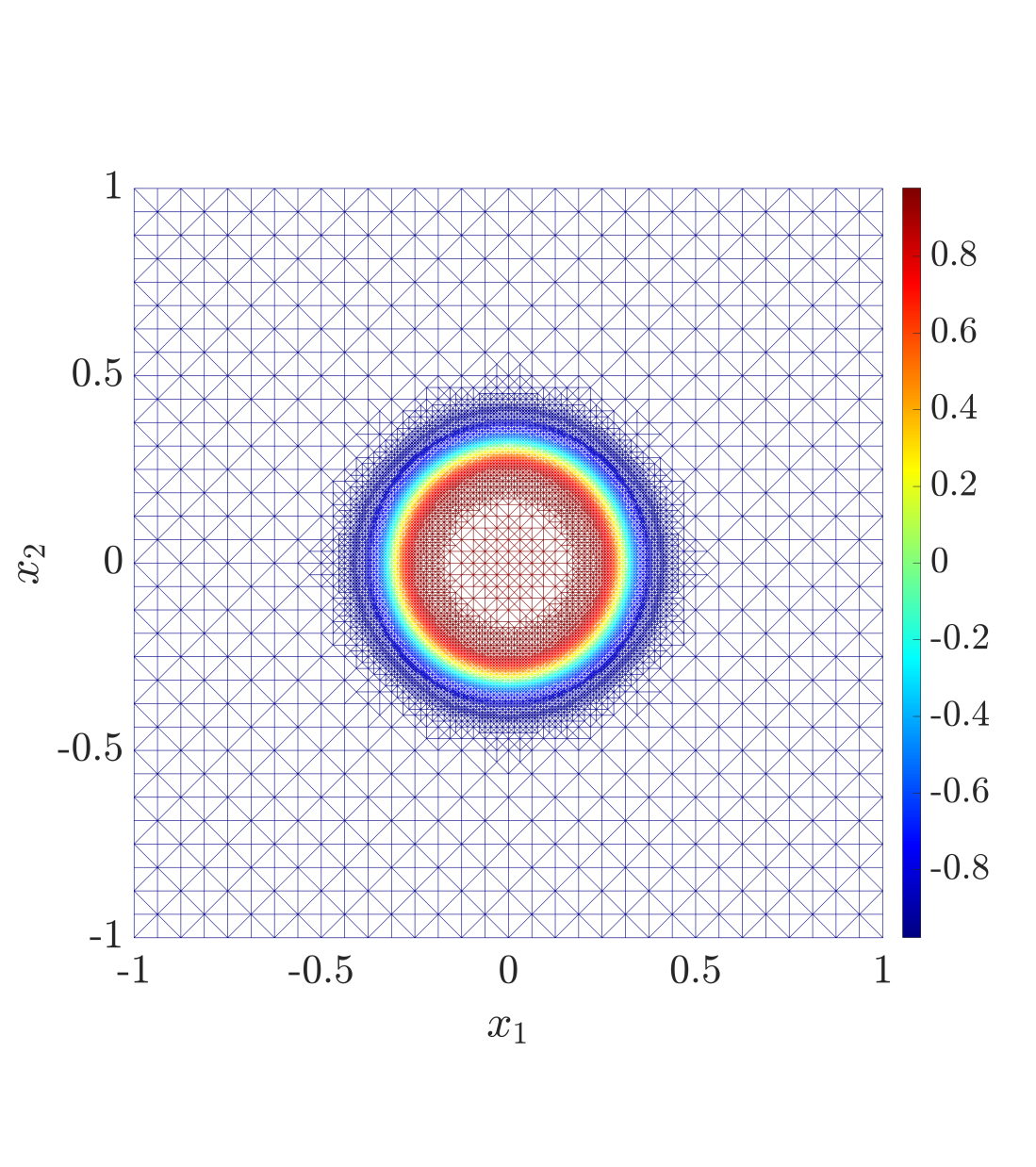}
\end{figure}
\end{minipage}
\hfill
\begin{minipage}{0.48\textwidth}
\begin{figure}[H]
\centering
\includegraphics[scale=0.35, trim=0 35 0 50, clip]{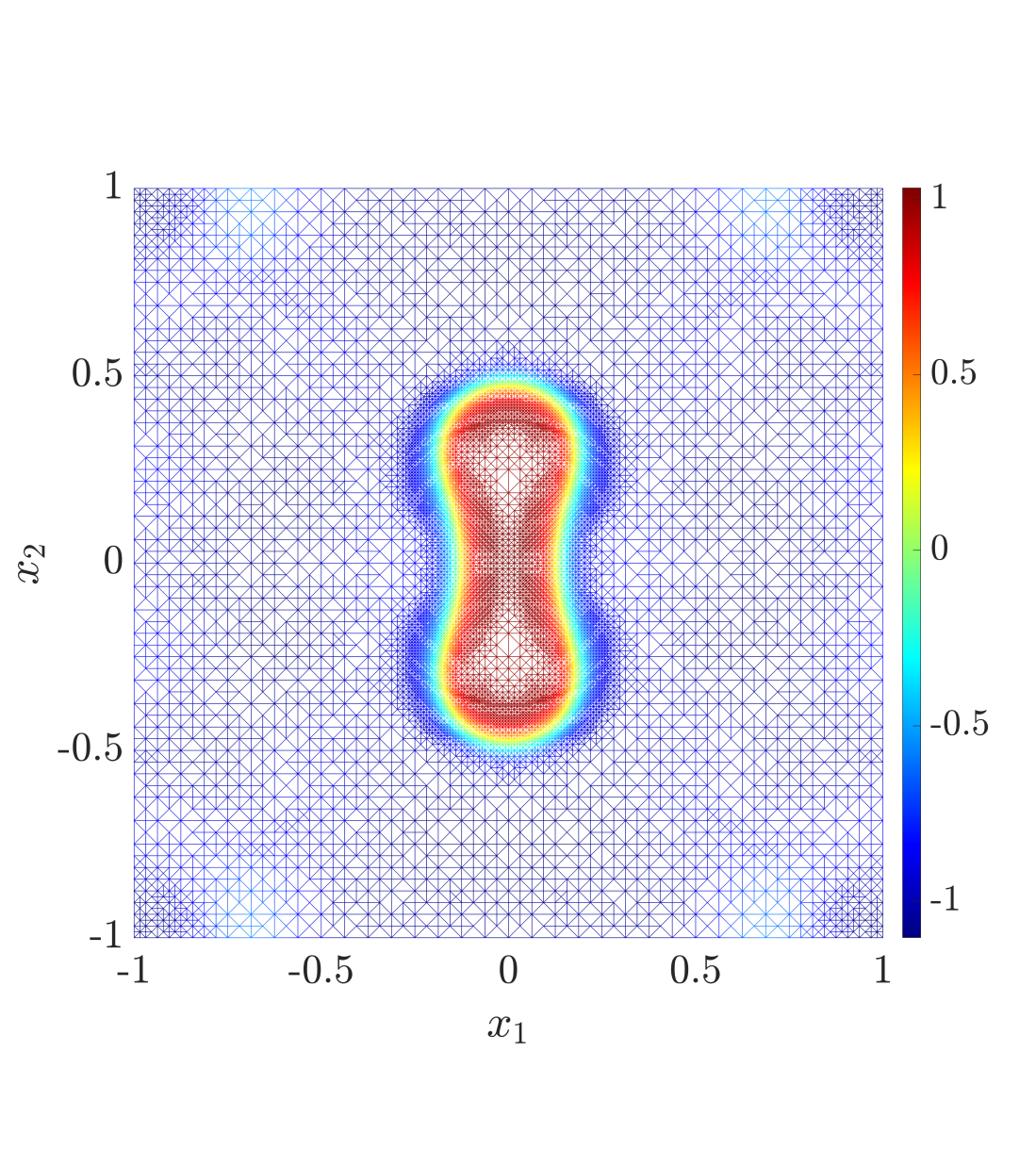}
\end{figure}
\end{minipage}\\
\caption{Snapshots of the mesh for the numerical solution at $t = 0, 0.004, 0.008, 0.012$ (from top to bottom)
deterministic solution (left) and stochastic solution for $\sigma=5$ (right).}
\label{Fig_CHstoch_detstoch_2_mesh}
\end{figure}
\end{center}

\begin{center}\begin{figure}[htp]
\centering
\includegraphics[scale=0.5, trim=0 0 0 0, clip]{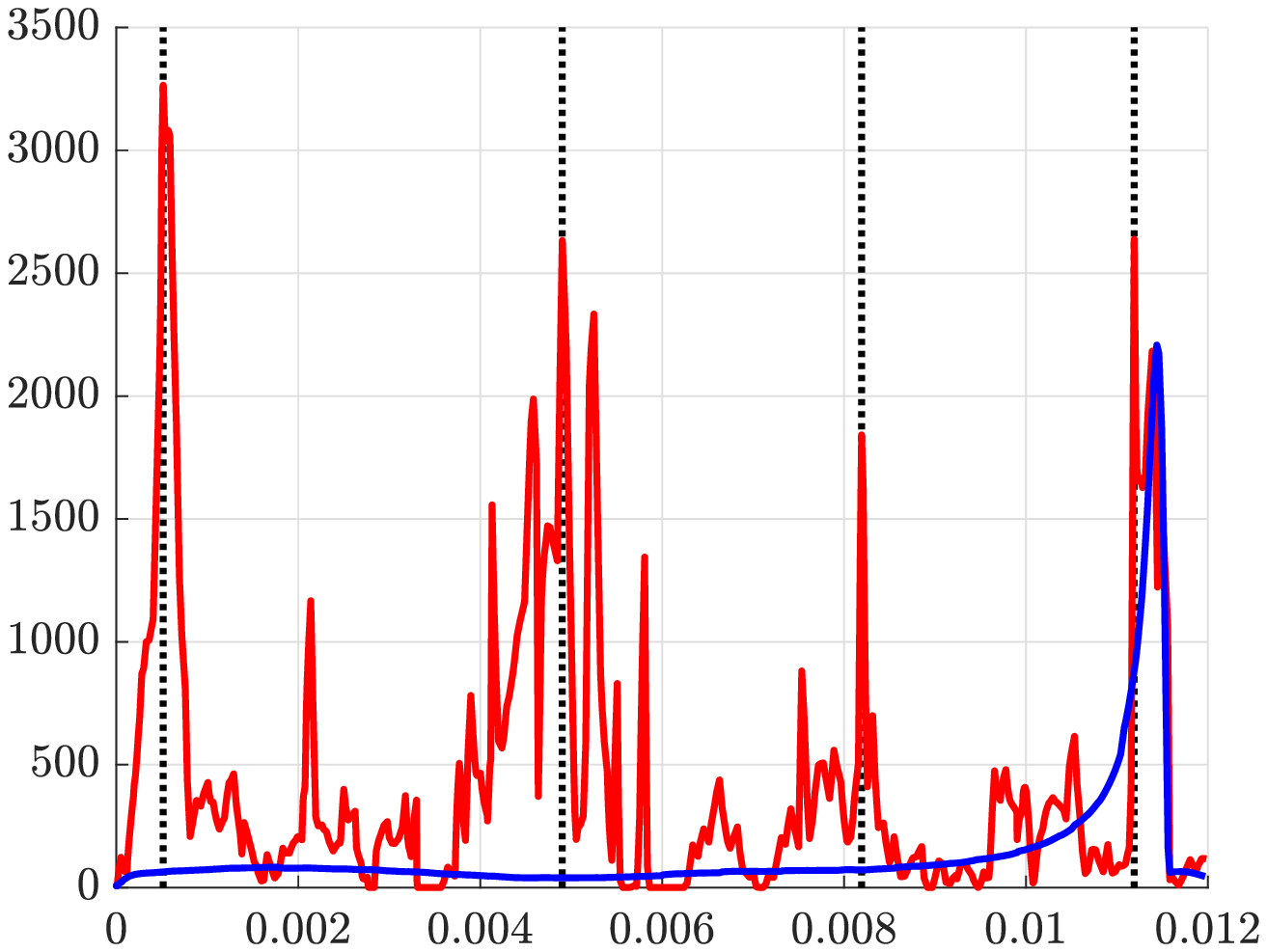}
\caption{Evolution of the principal eigenvalue, deterministic case (blue), stochastic case for $\sigma=5$ (red). 
The dotted vertical lines correspond to the time-levels in Fig.~\ref{Fig_CHstoch_stoch_2_peak_mesh}.}
\label{Fig_CHstoch_2_Lambda}
\end{figure}
\end{center}

\begin{center}\begin{figure}[htp]
\begin{minipage}{0.48\textwidth}
\begin{figure}[H]
\centering
\includegraphics[scale=0.48, trim=0 35 0 50, clip]{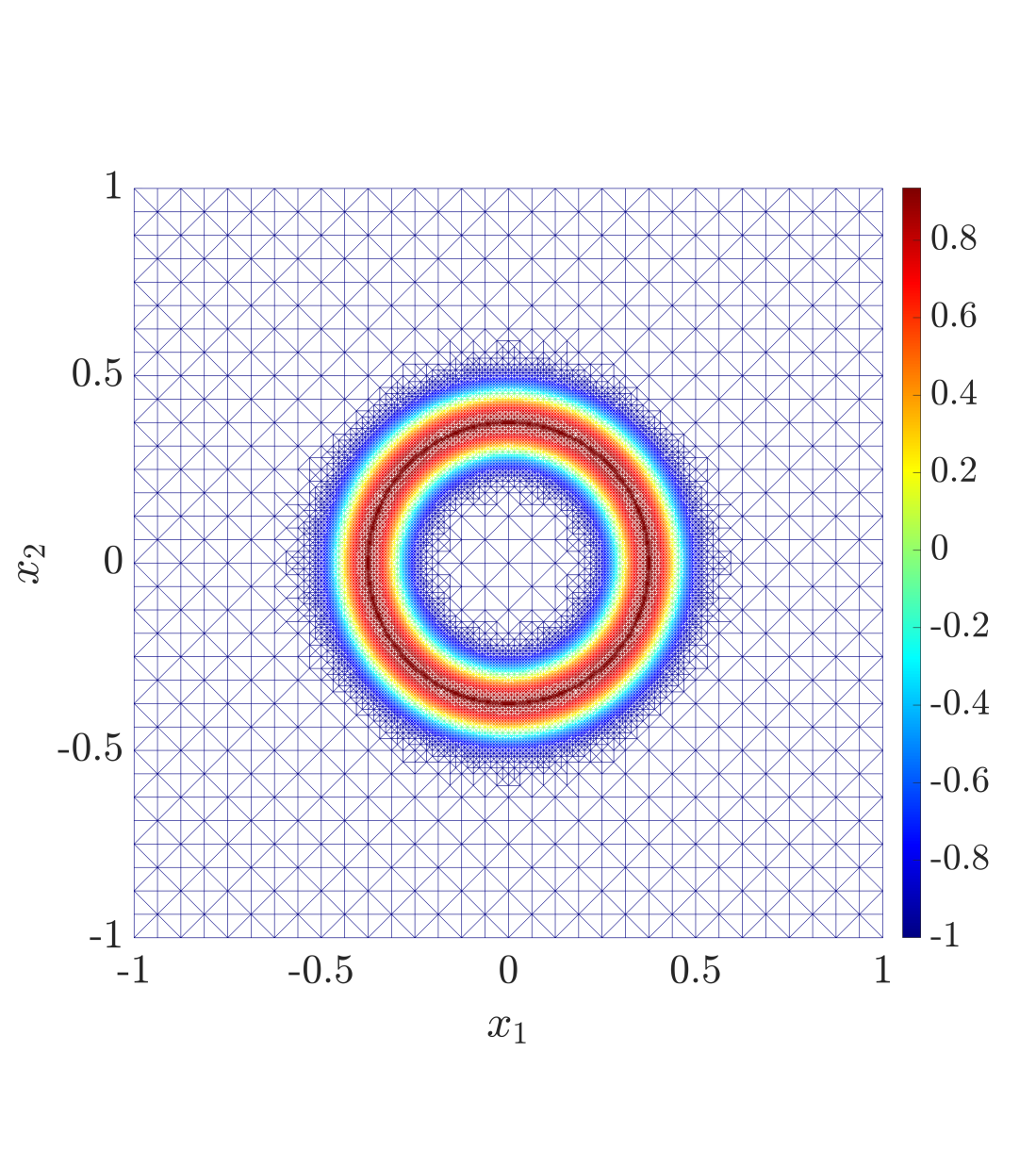}
\end{figure}
\end{minipage}
\hfill
\begin{minipage}{0.48\textwidth}
\begin{figure}[H]
\centering
\includegraphics[scale=0.48, trim=0 35 0 50, clip]{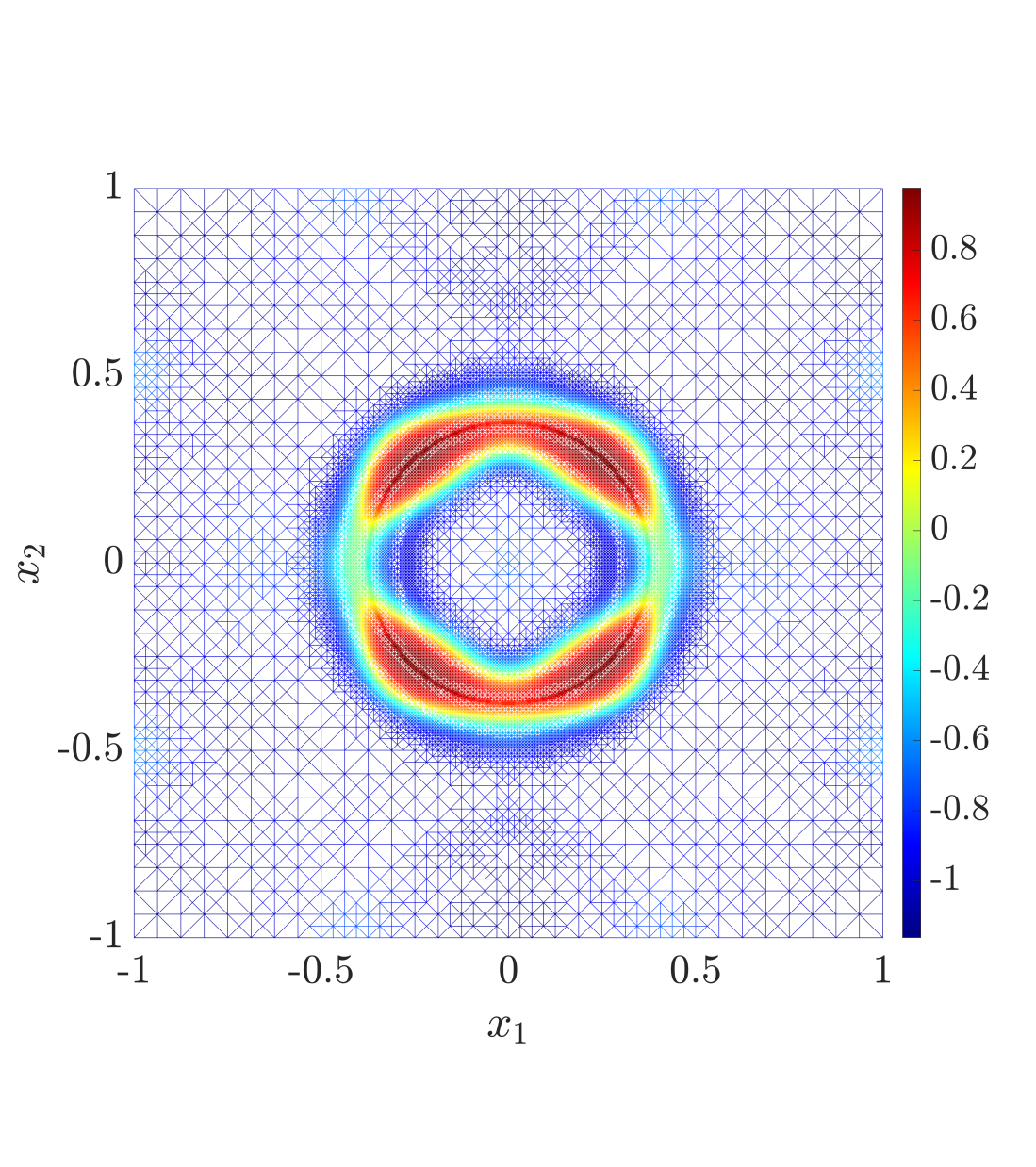}
\end{figure}
\end{minipage}\\
\begin{minipage}{0.48\textwidth}
\begin{figure}[H]
\centering
\includegraphics[scale=0.48, trim=0 35 0 50, clip]{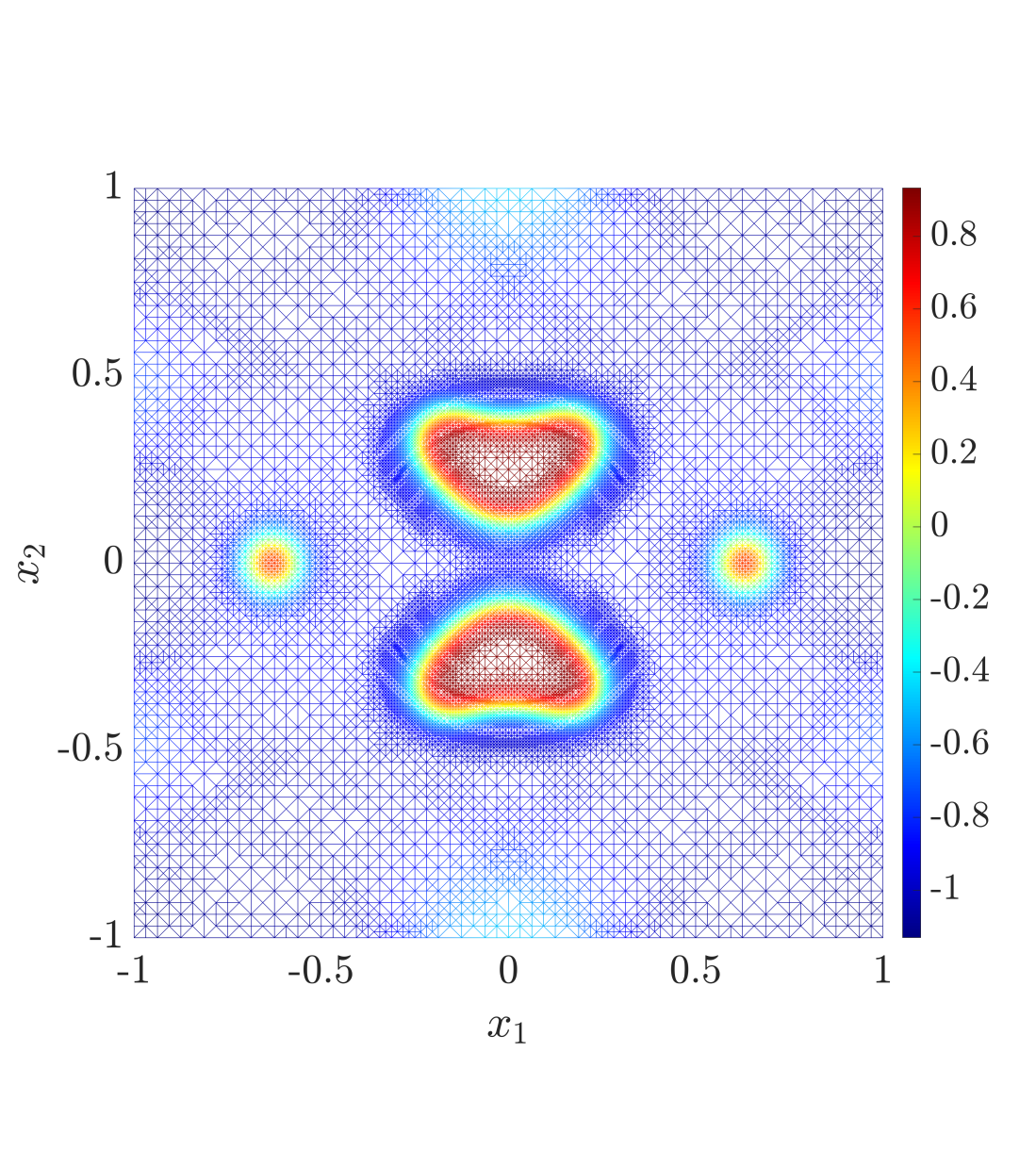}
\end{figure}
\end{minipage}
\hfill
\begin{minipage}{0.48\textwidth}
\begin{figure}[H]
\centering
\includegraphics[scale=0.48, trim=0 35 0 50, clip]{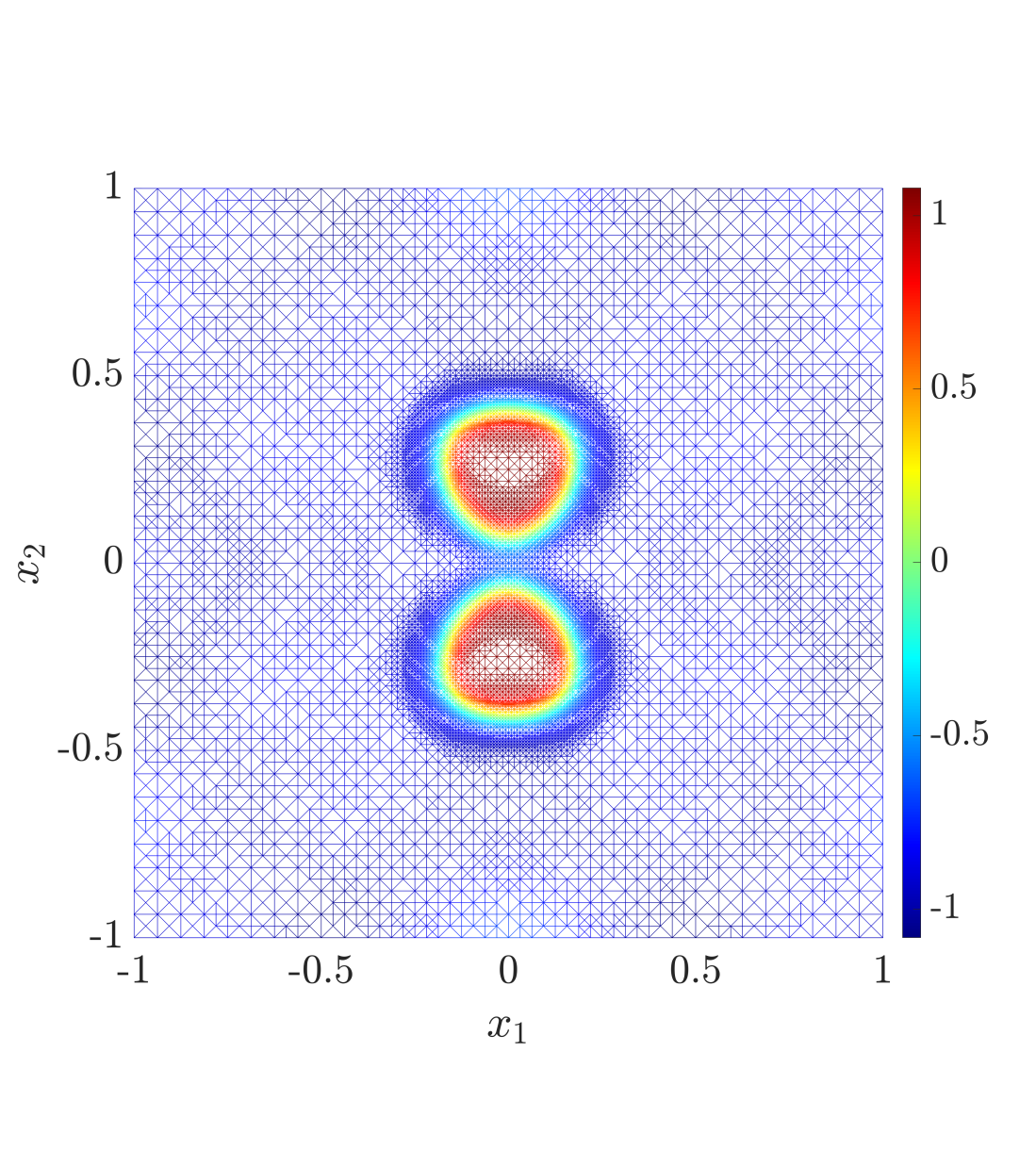}
\end{figure}
\end{minipage}
\caption{Snapshots of the mesh for the numerical solution at the peaks of the stochastic principal eigenvalue for $\sigma=5$ at time $t = 0.0005, 0.0049, 0.0081, 0.0111$.}
\label{Fig_CHstoch_stoch_2_peak_mesh}
\end{figure}
\end{center}

\section*{Acknowledgement}

We thank Martin Ondrej\'at for helping us with the proof of the estimate of the term $I_{1,6}$ in Lemma~\ref{Lemma_CHest_etilde}.

\bibliographystyle{plain}
\bibliography{references}

\begin{thebibliography}{10}

\bibitem{abc94}
Nicholas~D. Alikakos, Peter~W. Bates, and Xinfu Chen.
\newblock Convergence of the {C}ahn--{H}illiard equation to the {H}ele--{S}haw
  model.
\newblock {\em Arch. Rational Mech. Anal.}, 128(2):165--205, 1994.

\bibitem{Banas19}
D.~Antonopoulou, \v{L}. Ba\v{n}as, R.~N\"{u}rnberg, and A.~Prohl.
\newblock Numerical approximation of the stochastic {C}ahn-{H}illiard equation
  near the sharp interface limit.
\newblock {\em Numer. Math.}, 147(3):505--551, 2021.

\bibitem{BartelsMueller2011}
S\"{o}ren Bartels and R\"{u}diger M\"{u}ller.
\newblock Error control for the approximation of {A}llen-{C}ahn and
  {C}ahn-{H}illiard equations with a logarithmic potential.
\newblock {\em Numer. Math.}, 119(3):409--435, 2011.

\bibitem{BartelsMuellerQO}
S{\"o}ren Bartels and R{\"u}diger M{\"u}ller.
\newblock Quasi-optimal and robust a posteriori error estimates in
  {$L^\infty(L^2)$} for the approximation of {A}llen-{C}ahn equations past
  singularities.
\newblock {\em Math. Comp.}, 80(274):761--780, 2011.

\bibitem{BartelsNumNonlinear}
Sören Bartels.
\newblock {\em Numerical Methods for Nonlinear Partial Differential Equations}.
\newblock Springer Series in Computational Mathematics; 47. Springer
  International Publishing, Cham, 2015.

\bibitem{bw21}
\v{L}. Ba\v{n}as and A.~Wilke.
\newblock A posteriori estimates for the stochastic total variation flow.
\newblock arXiv:2110.08002, 2021.

\bibitem{BanasZhu2019}
\v{L}. Ba\v{n}as, H.~Yang, and R.~Zhu.
\newblock Sharp interface limit of stochastic {C}ahn-{H}illiard equation with
  singular noise.
\newblock arXiv:1905.07216, 2019.

\bibitem{lb09}
\v{L}ubom\'{\i}r Ba\v{n}as and Robert N\"{u}rnberg.
\newblock A posteriori estimates for the {C}ahn-{H}illiard equation with
  obstacle free energy.
\newblock {\em M2AN Math. Model. Numer. Anal.}, 43(5):1003--1026, 2009.

\bibitem{sllg_book}
\v{L}ubomír Ba\v{n}as, Zdzis{\l}aw Brze\'{z}niak, Mikhail Neklyudov, and
  Andreas Prohl.
\newblock {\em Stochastic ferromagnetism}, volume~58 of {\em De Gruyter Studies
  in Mathematics}.
\newblock De Gruyter, Berlin, 2014.
\newblock Analysis and numerics.

\bibitem{BloweyElliott91}
J.~F. Blowey and C.~M. Elliott.
\newblock The {C}ahn-{H}illiard gradient theory for phase separation with
  nonsmooth free energy. {I}. {M}athematical analysis.
\newblock {\em European J. Appl. Math.}, 2(3):233--280, 1991.

\bibitem{BloweyElliott92}
J.~F. Blowey and C.~M. Elliott.
\newblock The {C}ahn–{H}illiard gradient theory for phase separation with
  non-smooth free energy part {II}: Numerical analysis.
\newblock {\em European Journal of Applied Mathematics}, 3(2):147–179, 1992.

\bibitem{DaPratoDebussche}
G.~Da~Prato and Debussche. A.
\newblock Stochastic {C}ahn-{H}illiard equation.
\newblock {\em Nonlinear Analysis: Theory, Methods \& Applications}, 26(2):241
  -- 263, 1996.

\bibitem{df20}
Qiang Du and Xiaobing Feng.
\newblock Chapter 5 - the phase field method for geometric moving interfaces
  and their numerical approximations.
\newblock In Andrea Bonito and Ricardo~H. Nochetto, editors, {\em Geometric
  Partial Differential Equations - Part I}, volume~21 of {\em Handbook of
  Numerical Analysis}, pages 425--508. Elsevier, 2020.

\bibitem{FengProhl}
Xiaobing Feng and Andreas Prohl.
\newblock Numerical analysis of the allen-cahn equation and approximation for
  mean curvature flows.
\newblock {\em Numerische Mathematik}, 94:33--65, 03 2003.

\bibitem{fp04}
Xiaobing Feng and Andreas Prohl.
\newblock Error analysis of a mixed finite element method for the
  {C}ahn-{H}illiard equation.
\newblock {\em Numer. Math.}, 99(1):47--84, 2004.

\bibitem{fp05}
Xiaobing Feng and Andreas Prohl.
\newblock Numerical analysis of the {C}ahn-{H}illiard equation and
  approximation of the {H}ele-{S}haw problem.
\newblock {\em Interfaces Free Bound.}, 7(1):1--28, 2005.

\bibitem{MajeeProhl}
A.~Majee and A.~Prohl.
\newblock A posteriori error estimation and space-time adaptivity for a linear
  stochastic pde with additive noise.
\newblock {\em IMA J. Numer. Anal.}, 2021.

\bibitem{PS19}
A.~Prohl and C.~Schellnegger.
\newblock Adaptive concepts for stochastic partial differential equations.
\newblock {\em J. Sci. Comput.}, 80(1):444--474, 2019.

\bibitem{simon90}
Jacques Simon.
\newblock Sobolev, {B}esov and {N}ikolskii fractional spaces: imbeddings and
  comparisons for vector valued spaces on an interval.
\newblock {\em Ann. Mat. Pura Appl. (4)}, 157:117--148, 1990.

\end{thebibliography}

\end{document}